\documentclass{mcom-l}

\usepackage{lipsum}
\usepackage{amsfonts}
\usepackage{graphicx}
\usepackage{epstopdf}
\usepackage{algorithmic}
\usepackage{booktabs} % For professional looking tables
\usepackage{ulem}
\usepackage{amsmath}
\allowdisplaybreaks[4]
\usepackage{cancel}
\usepackage{multirow}
\usepackage{graphicx}
\usepackage{amssymb}
\usepackage{color}
\usepackage{mathtools}
\usepackage{tikz}
\usepackage[hidelinks]{hyperref}
\usepackage{amsrefs}
\usepackage{float}
\usepackage{caption}
\usepackage[toc,page]{appendix}
\usepackage{mathrsfs}
\usepackage{mathtools}
 \DeclarePairedDelimiter{\norm}{\lVert}{\rVert}
\usepackage{latexsym, bm}
\ifpdf
  \DeclareGraphicsExtensions{.eps,.pdf,.png,.jpg}
\else
  \DeclareGraphicsExtensions{.eps}
\fi

\def\vT{\vartheta}

\def\no{{\nonumber}}

\newcommand{\mNQ}{\mathbf{N}_{h}}
\newcommand{\mNu}{\mathcal{N}_{h}}
\newcommand{\mNQz}{\mathbf{N}}
\newcommand{\mNuz}{\mathcal{N}}
\newcommand{\muQ}{\mathbf{H}_h}
\newcommand{\muu}{\mu_h}
\newcommand{\muQz}{\mathbf{H}}
\newcommand{\muuz}{\mu}
\newcommand{\muQa}{\mathbf{H}_{\rm p}}
\newcommand{\muua}{\mu_{p}}
\newcommand{\muQb}{\mathbf{H}_{\rm d}}

\newcommand{\muQaz}{\mathbf{H}_{\rm p}}
\newcommand{\muuaz}{\mu_{p}}
\newcommand{\muQbz}{\mathbf{H}_{\rm d}}

\newcommand{\mLQ}{\mathcal{L}_{h}}
\newcommand{\mLu}{\mathcal{D}_{h}}

\newcommand{\id}[2]{\left\langle #1,#2\right\rangle_h}
\newcommand{\ia}[1]{ \norm{#1 }_{h}^2}

\newcommand{\ilQ}[1]{\norm{#1 }_{\mathcal{L}_h}^2}
\newcommand{\ilu}[1]{\norm{#1 }_{\mathcal{D}_h}^2}

\newcommand{\iaQ}[1]{\norm{#1 }^2_{\mathcal{Q}_{\mathcal{L}}}}
\newcommand{\ibQ}[1]{\norm{#1 }^2_{\mathcal{Q}_{\mathcal{L}}^1}}
\newcommand{\icQ}[1]{\norm{#1 }^2_{\mathcal{Q}_{\mathcal{L}}^2}}
\newcommand{\iau}[1]{\norm{#1 }^2_{\mathcal{Q}_{\mathcal{D}}}}
\newcommand{\ibu}[1]{\norm{#1 }^2_{\mathcal{Q}_{\mathcal{D}}^1}}

 \newcommand{\mQ}{\mathcal{Q}(\tau \mLQ)}
\newcommand{\dtau}{\delta_\tau}
\newcommand{\be}{\boldsymbol{e}}

\newcommand{\ep}{\epsilon}

\newcommand{\bM}{\mathbf{M}}

 \newcommand{\ea}{\delta_\tau \be_{\mathbf{Q}}^{n+1}}
\newcommand{\eb}{\delta_\tau e_{u}^{n+1}}
\newcommand{\ec}{\delta_\tau e_{s}^{n+1}}
\newcommand{\vertiii}[1]{{\left\vert\kern-0.23ex\left\vert\kern-0.23ex\left\vert #1
		\right\vert\kern-0.23ex\right\vert\kern-0.23ex\right\vert}}

 \newcommand{\da}{\delta_\tau \mathbf{Q}(t_{n+1})}
 \newcommand{\db}{\delta_\tau u(t_{n+1})}
\def\ba{\mathbf{a}}

\def\be{\mathbf{e}}

\def\A{\alpha}
\def\B{\beta}
\def\C{\gamma}

\def\bu{u}

\def\bQ{\mathbf{Q}_h}
\def\bu{u_h}

\newcommand{\mq}{\mathcal{Q}}

\def\ma{\mathcal{A}}

\def\ts{{s}}
\def\bX{\mathbf{X}}

 \def\ba{\begin{equation}\begin{aligned}}
 \def\ed{\end{aligned}\end{equation}}
\newtheorem{theorem}{Theorem}[section]
\newtheorem{lemma}[theorem]{Lemma}

\theoremstyle{definition}

\theoremstyle{remark}
\newtheorem{remark}[theorem]{Remark}

\numberwithin{equation}{section}

\begin{document}

% \title[short text for running head]{full title}
\title[GSAV-EI scheme for smectic-A phase]{A Structure-Preserving GSAV Exponential Integrator for Smectic-A Liquid Crystals}

%    Only \author and \address are required; other information is
%    optional.  Remove any unused author tags.

%    author one information
% \author[short version for running head]{name for top of paper}
\newcommand{\BNUaddress}{%
Laboratory of Mathematics and Complex Systems, Ministry of Education,
School of Mathematical Sciences, Beijing Normal University,
Beijing 100875, China}

\author{Wenshuai Hu}
\address{\BNUaddress}
\email{202431130052@mail.bnu.edu.cn}

\author{Guanghua Ji}
\address{\BNUaddress}
\email{ghji@bnu.edu.cn}
\thanks{Corresponding author. G. Ji is partially supported by the National Natural Science Foundation of China (Grant No. 12471363)}

\author{Xiao Li}
\address{\BNUaddress}
\email{lixiao@bnu.edu.cn}
\thanks{X. Li is partially supported by  Beijing Normal University (Grant No. 312200502508)}
%    \subjclass is required.
\subjclass[2020]{Primary 65M12, 65M70, 76A15.}
\keywords{Smectic-A liquid crystals, $\mathbf{Q}$-tensor, generalized scalar auxiliary
variable, exponential time differencing, energy stability, error estimates.}
\date{}

\dedicatory{}

%    Abstract is required.
\begin{abstract}
The modified Landau--de Gennes (mLdG) theory provides a powerful continuum framework for modeling smectic-A (SmA) liquid crystals by coupling the tensorial orientational order parameter $\mathbf{Q}$ with the scalar positional order parameter $u$.
    In this paper, we develop and analyze a structure-preserving generalized scalar auxiliary variable exponential integrator (GSAV-EI) scheme for the fully coupled mLdG system.
   The key ingredient is a backward Euler-type reformulation of the exponential integrator, which reveals a coercive discrete structure suitable for energy estimates and error analysis.
Moreover, it eliminates the mesh-dependent time-step restriction ($\tau \lesssim h^2$) required in the existing GSAV-EI error analysis and provides a transparent connection between exponential time-differencing and implicit time-stepping methods.
Building on this reconstructed structure, we prove unconditional modified-energy stability and establish optimal-order fully discrete error estimates.
   Numerical experiments in two and three dimensions validate the theoretical convergence rates, verify the discrete energy-dissipation law, and illustrate the self-assembly dynamics of the SmA phase.
\end{abstract}

\maketitle
\section{Introduction}
Liquid crystals are mesophases that exhibit properties intermediate between those of conventional solids and liquids %states and are generally known as the fourth state of matter
\cite{han2015microscopic,liu2007dynamics}.
%According to the orientational and the positional order, liquid crystals can exhibit different phases, such as nematic, cholesteric, and smectic phases
Depending on the degree of orientational and positional order, they can form nematic, cholesteric, and smectic phases
\cite{ball2011orientability,wang2021modelling,biscari2007landau}.
%The nematic phase exhibits only long-range orientational order, whereas the smectic phase develops an additional one-dimensional positional order, organizing molecules into layered structures \cite{wang2021modelling}.
Among smectic phases, Smectic-A (SmA) liquid crystals are characterized by a dual-order structure: the molecules form one-dimensional density layers, and the orientational director is aligned with the layer normal \cite{xia2021structural,xia2023variational,E1997modeling}. Although the classical Landau-de Gennes (LdG) theory has been highly successful in describing isotropic-nematic transitions and nematic defect structures \cite{fei2018isotropic,majumdar2010equilibrium}, it does not directly capture the positional ordering that is essential for the SmA phase \cite{shi2025modified}. The mLdG theory addresses this limitation by coupling the orientational Q-tensor with a scalar positional order parameter \cite{Ball03052015,xia2021structural,xia2024simple}, thereby providing a thermodynamically consistent framework for the isotropic-nematic-smectic transition and resulting defect dynamics \cite{xia2023variational,shi2025modified}.

For the SmA liquid crystal system considered in this paper, the mLdG energy proposed by Xia et al. \cite{xia2021structural,xia2023variational} is given by
\begin{equation}
    E[\mathbf{Q}, u] = \int_{\Omega} \left( f_{\mathrm{LdG}}(\mathbf{Q}, \nabla \mathbf{Q}) + f_{\mathrm{bs}}(u) + f_{\mathrm{int}}(\mathbf{Q}, u) \right) \, \mathrm{d}\mathbf{x},
\end{equation}
where $\Omega\subset\mathbb{R}^d$ ($d=2,3$) is a smooth, bounded domain.
The LdG order parameter $\mathbf{Q}(\mathbf{x})$ is a symmetric and traceless
$d\times d$ tensor, while $u(\mathbf{x})$ is a scalar order parameter
representing the relative deviation of the molecular density from its mean
\cite{shi2025modified}.
% More precisely, the $\mathbf{Q}$-tensor is defined as
% \begin{equation}
%     \mathbf{Q} = s_+ \left( \mathbf{n} \otimes \mathbf{n} - \frac{\mathbf{I}_d}{d} \right),
% \end{equation}
% where $s_+$ is the scalar order parameter and $\mathbf{n}$ is the unit vector along the director \cite{wang2021modelling}.

In the classical LdG theory \cite{deGennes1974}, the free-energy density
$f_{\mathrm{LdG}}$ is given by
% The LdG energy density $f_{\mathrm{LdG}}$ is given by the classical LdG theory \cite{deGennes1974} as follows:
\begin{equation}
    f_{\mathrm{LdG}}(\mathbf{Q}, \nabla \mathbf{Q}) = \frac{K}{2} |\nabla \mathbf{Q}|^2 +  \frac{\A}{2}\operatorname{tr}(\mathbf{Q}^2) - \frac{\B}{3}\operatorname{tr}(\mathbf{Q}^3) + \frac{\C}{4}(\operatorname{tr}(\mathbf{Q}^2))^2,
\end{equation}
where $|\nabla \mathbf{Q}|^2=\sum_{i,j,k}(\partial_i Q_{jk})^2$
and $\operatorname{tr}(\Phi)=\sum_i\Phi_{ii}$ for any
$\Phi\in\mathbb{R}^{d\times d}$.
Here, $K>0$ denotes the elastic constant, while
$\A=a_1(\vT-\vT_1^*)$ is a temperature-dependent coefficient with
$a_1>0$, where $\vT$ is the temperature and $\vT_1^*$ is the
characteristic temperature for nematic ordering. The constants $\B\ge 0$ and
$\C>0$ are  material-dependent bulk constants.

Following the Landau phase transition theory \cite{pevnyi2014modeling,izzo2020landau}, the smectic bulk energy density $f_{\mathrm{bs}}(u)$ is a polynomial function of $u$ given by
% The smectic bulk energy density $f_{\mathrm{bs}}(u)$, derived from the Landau phase transition theory \cite{pevnyi2014modeling,izzo2020landau}, is a polynomial function of $u$ given by
\begin{equation}
    f_{\mathrm{bs}}(u) = \frac{a}{2}u^2 + \frac{b}{3}u^3 + \frac{c}{4}u^4,
\end{equation}
where $a=a_2(\vT-\vT_2^*)$  is temperature dependent with $a_2>0$,
$\vT_2^*<\vT_1^*$ is the characteristic temperature for the nematic-smectic
transition, while $b,c$ are material constants with $c>0$; in particular,
$b=0$ for symmetric molecules \cite{pevnyi2014modeling}.

Following the mLdG theory \cite{Ball03052015,shi2025modified,xia2021structural}, the coupling term $f_{\mathrm{int}}(\mathbf{Q},u)$ captures the interaction
between $\mathbf{Q}$ and $u$ and is given by
%
% The coupling term $f_{\mathrm{int}}(\mathbf{Q}, u)$  is designed to ensure that the energy is minimized when the orientational order $\mathbf{Q}$ and the positional order $u$ are compatible with the layered structure of SmA phase. Following \cite{Ball03052015,shi2025modified}, the explicit formulation for $f_{\mathrm{int}}$ is given as
\begin{align*}
    f_{\mathrm{int}}(\mathbf{Q}, u) & = \begin{cases}
                                    B_0 \left| D^2 u + q^2 \left( \frac{\mathbf{Q}}{s_+} + \frac{\mathbf{I}_d}{d} \right) u \right|_{F}^2,                                                                                    & (d=3,\ \A < \frac{\B^2}{27\C})
~\text{or}~
(d=2,\ \A < 0), \\
                                   B_0 |D^2 u|_{F}^2, & \text{otherwise},
                               \end{cases}\\
                   s_+=&\dfrac{\B+\sqrt{\B^2-24\A\C}}{4\C}~(d=3,\ \A<\frac{\B^2}{27\C}),\quad
s_+=\sqrt{-2\A/\C}~(d=2,\ \A<0),
\end{align*}
where $|\Phi|_F=\sqrt{\Phi : \Phi}=\sqrt{\sum_{i,j} \Phi_{ij}^2}$ is the Frobenius norm for any  $\Phi$ in $\mathbb{R}^{d\times d}$,
$B_0 > 0$ is the coupling strength, $q = 2\pi/l$ is the wave number for the smectic layer thickness $l$, $\mathbf{I}_d$ is the $d \times d$ identity matrix, and $D^2 u$  denotes the Hessian matrix of $u$.
% Subject to the symmetry and traceless constraints on $\mathbf{Q}$,

The $L^2$-gradient flow associated with $E[\mathbf{Q},u]$ leads to the following coupled nonlinear PDE system:
\ba \label{1.3a}
\frac{\partial \mathbf{Q}}{\partial t} = - \left( \frac{\delta E}{\delta \mathbf{Q}} \right) + \lambda \mathbf{I}_d + \nu - \nu^{T},\qquad
\frac{\partial u}{\partial t} = -  \frac{\delta E}{\delta u},
\ed
where  $\lambda$ and $\nu = (\nu^{ij})_{d\times d}$ are Lagrange multipliers enforcing the tracelessness and symmetry constraints on $\mathbf{Q}$.
When $(d=3,\ \A < \frac{\B^2}{27\C})$
or
$(d=2,\ \A < 0)$, the resulting system  of \eqref{1.3a} with periodic boundary conditions   can be explicitly expressed as
% \begin{small}
    \begin{equation}\label{eq1_9}
    \begin{aligned}
        \frac{\partial \mathbf{Q}}{\partial t} & = K\Delta\mathbf{Q} - \left[ \A\mathbf{Q} - \B \left( \mathbf{Q}^2 - \frac{\operatorname{tr}(\mathbf{Q}^2)}{d}\mathbf{I}_d \right) + \C \operatorname{tr}(\mathbf{Q}^2)\mathbf{Q} \right] \\
                                               & \quad -  \frac{2B_0q^2}{s_+}   \left( u D^2 u - \frac{\operatorname{tr}(u D^2 u)}{d}\mathbf{I}_d \right) -  \frac{2B_0q^4}{s_+^2} \mathbf{Q}u^2,                   \\
        \frac{\partial u}{\partial t}          & = - 2B_0\Delta^2 u - au - bu^2 - cu^3  - 2B_0 D^2 u : (q^2\bM )                                                                                                              \\
                                               & \quad - 2B_0 q^2\nabla \cdot \left( \nabla \cdot \left( \bM u \right) \right)  - 2B_0  \left| q^2 \bM \right|_F^2 u,
    \end{aligned}
\end{equation}
% \end{small}
where $\bM=\frac{\mathbf{Q}}{s_+} + \frac{\mathbf{I}_d}{d}$.
In the complementary case, however, system \eqref{eq1_9} reduces to a
degenerate SmA model, which can be obtained by setting $q = 0$.

Building upon the LdG energy, the mathematical analysis and numerical simulation of nematic liquid crystals have been extensively studied in the past decades \cite{izzo2020landau,pevnyi2014modeling,chen1976landau,huangGlobalWellposednessDynamical2015,huang2024errorestimateorderenergy,Ji2020BDF2}.
Recently, the formulation and analysis of the  mLdG theory have been systematically advanced \cite{Ball03052015,xia2021structural}.  Xia et al. \cite{xia2023variational}   established the existence of global minimizers for the mLdG energy  and derived a priori error estimates for its FEM scheme in
the decoupled case ($q=0$).
 Furthermore, Shi et al. \cite{shi2025modified} rigorously established the existence of global weak solutions to this system, and analytically proved that the mLdG model can capture the isotropic-nematic-smectic (I-N-S) phase transition as a function of temperature. However,   there is a critical need  to design an  energy-stable, decoupled numerical scheme for  \eqref{eq1_9}.

Energy-stable time discretizations have become a central tool for the numerical simulation of gradient flow systems. Representative approaches include convex splitting  \cite{eyre1998unconditionally,baskaran2013convergence}, stabilized implicit-explicit (IMEX) schemes \cite{xu2006stability,shen2010numerical,feng2013stabilized}, discrete gradient methods \cite{du1991numerical,furihata2001stable}, exponential time differencing (ETD)  \cite{du2018stabilized,du2019,du2021,liu2025maximum}, invariant energy quadratization (IEQ) \cite{yang2017linearly,xu2019efficient,yang2020convergence}, and scalar auxiliary variable (SAV) methods \cite{jiang2022improving,shen2019new,shen2018scalar}. ETD methods are attractive because they integrate stiff linear operators exactly, while SAV-type methods provide a flexible mechanism for constructing linear and unconditionally energy-stable schemes for nonlinear gradient flows \cite{du2021,shen2019new}. A natural idea is therefore to combine these two advantages. Ju et al. \cite{ju2022generalized} proposed a GSAV-EI framework for Allen-Cahn-type gradient flows. However, the original analysis relies on a strong time step restriction, which limits its applicability to fully discrete error analysis and to more complicated coupled systems.

%In recent years, numerical schemes preserving the energy dissipation law have attracted a lot of attention for time integration of the  gradient flows, including convex splitting schemes \cite{eyre1998unconditionally,baskaran2013convergence}, stabilized implicit-explicit (IMEX) schemes \cite{xu2006stability,shen2010numerical,feng2013stabilized}, discrete gradient schemes \cite{du1991numerical,furihata2001stable}, exponential time differencing (ETD) schemes \cite{du2018stabilized,du2019,du2021,liu2025maximum}, invariant energy quadratization (IEQ) schemes \cite{yang2017linearly,xu2019efficient,yang2020convergence}, and scalar auxiliary variable (SAV) schemes \cite{shen2019new,shen2018scalar,zhang2022generalized,jiang2022improving}.
%In particular, the ETD method exactly integrates stiff linear operators, while the SAV method guarantees unconditional energy stability \cite{du2021,shen2019new}.
%To combine these excellent structural properties, Ju et al. \cite{ju2022generalized} merged these two approaches and proposed a linear GSAV-EI scheme for Allen-Cahn type gradient flows.
%Nevertheless, the theoretical analysis of this  approach inevitably requires a severe time-step restriction, which strictly precludes a fully discrete error estimate and limits the scheme's practical applications to complex coupled systems.

The present work removes this obstacle by reformulating the GSAV-EI discretization into an equivalent backward-Euler-type structure. This reformulation preserves the exponential integrator character of the method while making the stability and error analysis resemble that of an implicit time-stepping scheme. We apply this strategy to the mLdG SmA system, which is a highly nonlinear tensor--scalar coupled model involving fourth-order diffusion and derivative coupling terms in the potential energy. The resulting method is linear, decoupled, and unconditionally energy stable. Moreover, the new
formulation allows us to prove optimal fully discrete error estimates without the severe time-step restriction required by the original GSAV-EI analysis.
%In this paper, we systematically address this  theoretical obstacle by reformulating the exponential integrator structure thereby broadening the applicability of the GSAV-EI approach.
%In particular, we successfully apply the approach to the SmA system \eqref{eq1_9}, which is a fourth-order, highly nonlinear, tensor-scalar coupled model with derivative coupling terms  in the potential energy.
%Moreover, we rigorously establish the unconditional energy stability, uniform boundedness and optimal error estimates under our reformulated exponential integration framework. This indicates that our reformulated scheme can reproduce nearly all the theoretical analytical results associated with the traditional Duhamel-based ETD structure, while retaining the analytical simplicity of the backward Euler method. By bridging these two frameworks, this reformulation opens up entirely new possibilities for the theoretical analysis of ETD-type schemes.

%While the SAV approach has revolutionized the design of linear, unconditionally energy-stable schemes for gradient flows, it fundamentally preserves a modified discrete energy rather than the original physical free energy. Recently, a series of SAV-type or SAV-based methods have been proposed to solve this issue, such as MSAV \cite{cheng2018multiple}, RSAV \cite{jiang2022improving}, R-GSAV \cite{zhang2022generalized} and  EOP-SAV \cite{liu2024novel,zhang2025novel}. Notably, introducing relaxation techniques has emerged as a highly effective strategy to dynamically pull the modified energy significantly closer to the original free energy.

  The main contributions of this work are summarized as follows:
\begin{itemize}
    \item We develop a linear, decoupled, and structure-preserving GSAV-EI finite difference scheme for the fully coupled mLdG SmA system.
    %and analyze a novel generalized scalar auxiliary variable exponential integrator (GSAV-EI) scheme for system \eqref{eq1_9}, which is linear, decoupled, structure-preserving, and unconditionally energy-stable.
    \item  We reformulate the exponential integrator into an equivalent backward-Euler-type form. This removes the severe time-step restriction in the previous GSAV-EI analysis and makes a rigorous fully discrete error estimate possible.
    \item We prove unconditional modified-energy stability of the proposed scheme.
    \item We derive optimal fully discrete error estimates by combining the reformulated GSAV-EI structure with a bootstrap argument and higher-order regularity estimates for the numerical solution.
    % \item {\color{red}We introduce an exponential relaxation strategy, inspired by relaxed SAV(RSAV) \cite{jiang2022improving}  and energy-optimization SAV(EOP-SAV) \cite{liu2024novel,zhang2025novel}  ideas, to improve the consistency between the modified auxiliary energy and the originaal physical energy in long-time simulations.}
    %By integrating the RSAV \cite{jiang2022improving} and EOP-SAV \cite{liu2024novel,zhang2025novel} concepts, we propose a novel exponential relaxation technique, which better preserves the physical consistency of the system.
\end{itemize}

The rest of this paper is organized as follows.
In Section \ref{se2}, we
construct the fully discrete GSAV-EI finite difference scheme for the SmA model and  prove its unconditional modified-energy stability.
In Section \ref{se3}, we establish optimal fully discrete error estimates by a bootstrap argument based on  higher-order regularity bounds for the numerical solution.
% {\color{red}Section 4 presents a relaxation strategy that improve the long-time consistency between the modified and original energies.}
Section \ref{section6} reports numerical experiments that verify the theoretical results  and demonstrate the effectiveness of the proposed method in simulating  SmA self-assembly dynamics. We conclude with some
perspectives in Section \ref{section7}.
\section{Fully discrete GSAV-EI scheme and energy stability analysis}\label{se2}
%In this section, we construct the fully discrete GSAV-EI finite difference scheme for the SmA model \eqref{eq1_9} and present the reformulation strategy of the original GSAV-EI scheme.
First, we give some notation.
The space $\mathcal{S}^{(d)}$ is defined as the set of $d \times d$ symmetric and traceless matrices:
\begin{equation}
    \mathcal{S}^{(d)} \overset{\mathrm{def}}{=} \left\{ \bX \in \mathbb{R}^{d \times d} \;\middle|\; \operatorname{tr}(\bX) = 0, \bX^{ij} = \bX^{ji} \in \mathbb{R} \ \forall i,j = 1, \dots, d \right\}.
\end{equation}
%where $\operatorname{tr}(\bX) := \sum_{i=1}^d \bX^{ii}$.
The admissible space for  $\mathbf{Q}$ is denoted by $\mathbf{H}^1(\Omega, \mathcal{S}^{(d)})$, defined as
\begin{equation*}
    \mathbf{H}^1(\Omega, \mathcal{S}^{(d)}) := \left\{ \mathbf{Q}: \Omega \to \mathcal{S}^{(d)} \mid \mathbf{Q}_{ij} \in H^1(\Omega) \text{ for } i,j = 1, \dots, d \right\}.
\end{equation*}
For $\mathbf{Q}\in \mathbf{H}^1(\Omega,\mathcal{S}^{(d)})$, we define its standard Frobenius-type
$L^2$ norm  by
\begin{equation*}
    \|\mathbf{Q}\|_{L^2}^2
    :=
    \int_{\Omega} |\mathbf{Q}|_F^2\,d\mathbf{x},\quad
    \|\nabla \mathbf{Q}\|_{L^2}^2
    :=
    \int_{\Omega} |\nabla \mathbf{Q}|^2\,d\mathbf{x}.
\end{equation*}
The admissible space for $u$ is $H^2(\Omega)$.
%, which is defined as:
%\begin{equation*}
%    H^2(\Omega) := \left\{ u: \Omega \to \mathbb{R} \mid u \in L^2(\Omega), \nabla u \in (L^2(\Omega))^d, D^2 u \in (L^2(\Omega))^{d\times d} \right\}.
%\end{equation*}

 The $L^2$ gradient flow equations \eqref{1.3a} directly yield the energy dissipation law of the coupled system:
\begin{equation}\label{eq_dissipation}
    \frac{dE}{dt} = - \int_{\Omega} \left| \frac{\delta E}{\delta \mathbf{Q}} - \lambda \mathbf{I}_d - \nu + \nu^{T} \right|_F^2 d\mathbf{x} - \int_{\Omega} \left| \frac{\delta E}{\delta u} \right|^2 d\mathbf{x} \le 0.
\end{equation}
As a consequence of \eqref{eq_dissipation}, we obtain a priori  estimates for system \eqref{eq1_9}.
\begin{lemma}[A priori estimates]\label{lem:continuous_regularity}
  Assume that the initial data
$\mathbf{Q}_0\in\mathbf{H}^1(\Omega,\mathcal{S}^{(d)})$ and
$u_0\in H^2(\Omega)$ satisfy
$E(\mathbf{Q}_0,u_0)<+\infty$. Then, by the energy dissipation law \eqref{eq_dissipation}  and
the coercivity of $E$, the nonlinear energy functional
$E_1[\mathbf{Q},u]$ remains uniformly bounded on $[0,T]$; namely,
    \begin{align*}
        % \mathbf{Q}& \in L^\infty(0, T; \mathbf{H}^1(\Omega, \mathcal{S}^{(d)})), \quad u \in L^\infty(0, T; H^2(\Omega)),\\
          -C_*
    &\le
    E_1[\mathbf{Q},u]
    \coloneqq
    \int_{\Omega}
    2B_0(D^2u:q^2\mathbf{M}u)
    \,d\mathbf{x}
    +
    B_0
    \left\|
    q^2\mathbf{M}u
    \right\|_{L^2}^2
    \\&\qquad\qquad
    +
    \int_{\Omega}
    f_{\mathrm{bn}}(\mathbf{Q})
    +
    f_{\mathrm{bs}}(u)
    -
    \frac \kappa2(|\mathbf{Q}|_F^2+|u|^2)
    d\mathbf{x}
    \le
    C^* ,
    \end{align*}
 where $f_{\mathrm{bn}}(\mathbf{Q})=\frac{\A}{2}\operatorname{tr}(\mathbf{Q}^2)-\frac{\B}{3}\operatorname{tr}(\mathbf{Q}^3)+\frac{\C}{4}(\operatorname{tr}(\mathbf{Q}^2))^2$,
 $\kappa>0$ is a stabilization parameter, and $C_*,C^*>0$ are
constants depending only on the initial energy
$E(\mathbf{Q}_0,u_0)$, the domain $\Omega$, and the model parameters.
\end{lemma}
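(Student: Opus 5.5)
The plan is to first extract uniform-in-time bounds on $\|\nabla\mathbf{Q}\|_{L^2}$, $\|\mathbf{Q}\|_{L^4}$, $\|D^2u\|_{L^2}$ and $\|u\|_{L^4}$ from the dissipation law \eqref{eq_dissipation}. These bounds then control every term in $E_1$ from above and below via H\"older and Sobolev inequalities.

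\emph{Step 1 (energy bound and coercivity).} Integrating \eqref{eq_dissipation} over $[0,t]$ gives $E[\mathbf{Q}(t),u(t)]\le E(\mathbf{Q}_0,u_0)$ for all $t\in[0,T]$. For coercivity we bound $E$ from below.
\begin{itemize}
\item Because $\C>0$ and $c>0$, Young's inequality gives $f_{\mathrm{bn}}(\mathbf{Q})\ge \frac{\C}{8}|\mathbf{Q}|_F^4-C$ and $f_{\mathrm{bs}}(u)\ge\frac{c}{8}u^4-C$. Here we use $|\operatorname{tr}(\mathbf{Q}^3)|\le|\mathbf{Q}|_F^3$.
\item The interaction term $f_{\mathrm{int}}\ge0$ is a perfect square. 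However, it controls only $D^2u+q^2\mathbf{M}u$, not $D^2u$ itself.
\item So we write $|D^2u|_F^2\le 2|D^2u+q^2\mathbf{M}u|_F^2+2q^4|\mathbf{M}|_F^2u^2$.
\item The last term is bounded by $C(|\mathbf{Q}|_F^2+1)u^2\le \epsilon|\mathbf{Q}|_F^4+C_\epsilon u^4+C$. This is absorbed by a small multiple of the quartic bulk terms, adjusted by splitting the coefficient.
\end{itemize}
Adding a small multiple $\eta B_0|D^2u|_F^2$ this way, we obtain
\[
E\ge \tfrac K2\|\nabla\mathbf{Q}\|_{L^2}^2+\eta\|D^2u\|_{L^2}^2+c_1(\|\mathbf{Q}\|_{L^4}^4+\|u\|_{L^4}^4)-C|\Omega|.
\]
This yields uniform bounds for $\mathbf{Q}$ in $H^1\cap L^4$ and for $u$ in $H^2$. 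For $u$ we use the periodic setting and $\|u\|_{H^2}\lesssim\|D^2u\|_{L^2}+\|u\|_{L^2}$.

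\emph{Step 2 (bounding $E_1$).} We bound each term of $E_1$ in absolute value using the Step 1 bounds.
\begin{itemize}
\item The polynomial terms $\int f_{\mathrm{bn}}+f_{\mathrm{bs}}-\frac\kappa2(|\mathbf{Q}|_F^2+u^2)$ are controlled above and below by $C(\|\mathbf{Q}\|_{L^4}^4+\|u\|_{L^4}^4+1)$, together with the lower bounds from Step 1.
\item For the coupling terms, $|\int 2B_0 D^2u:q^2\mathbf{M}u|\le C\|D^2u\|_{L^2}\|\mathbf{M}\|_{L^4}\|u\|_{L^4}$.
\item Similarly, $B_0\|q^2\mathbf{M}u\|_{L^2}^2\le C\|\mathbf{M}\|_{L^4}^2\|u\|_{L^4}^2$.
\end{itemize}
Both coupling terms are bounded by constants depending only on $E(\mathbf{Q}_0,u_0)$, $\Omega$ and the parameters. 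This gives $-C_*\le E_1\le C^*$. In dimensions $d\le 3$ one could alternatively use $H^2\hookrightarrow L^\infty$ for $u$, but $L^4$ already suffices.

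\emph{Main obstacle.} The delicate point is the coercivity argument in Step 1. The Hessian is not directly controlled because the interaction term mixes $D^2u$ with $\mathbf{Q}u$. The absorption requires that the cross term $|\mathbf{Q}|_F^2u^2$ be dominated by the quartic bulk terms. I would handle this with $|\mathbf{Q}|_F^2u^2\le\frac{\epsilon}{2}|\mathbf{Q}|_F^4+\frac{1}{2\epsilon}u^4$, choosing $\eta$ small enough relative to $\C$ and $c$.

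A secondary point is the justification of the energy law for weak solutions. I would take it as given from \cite{shi2025modified}, or argue for smooth solutions and pass to the limit.
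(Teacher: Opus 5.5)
Your proposal is correct and follows essentially the same route as the paper: quartic lower bounds on $f_{\mathrm{bn}}$ and $f_{\mathrm{bs}}$ combined with the dissipation law, then recovering $\|D^2u\|_{L^2}$ from $\|D^2u+q^2\mathbf{M}u\|_{L^2}$ and $\|q^2\mathbf{M}u\|_{L^2}\le q^2\|\mathbf{M}\|_{L^4}\|u\|_{L^4}$, and finally bounding each piece of $E_1$. The only differences are minor. The paper first drops $f_{\mathrm{int}}\ge 0$ to get the $L^4$ bounds and only then bounds $\|D^2u\|_{L^2}$ by the triangle inequality, so your $\epsilon$-absorption step (your ``main obstacle'') is not actually needed. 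The paper also bounds the coupling part of $E_1$ through the identity $B_0\|D^2u+q^2\mathbf{M}u\|_{L^2}^2-B_0\|D^2u\|_{L^2}^2$ rather than by H\"older.
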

\begin{proof}
    % By the energy dissipation law \eqref{eq_dissipation}, we have
    % \ba \label{eqa1}
    % E[\mathbf{Q}(t), u(t)] \le E[\mathbf{Q}_0, u_0] , \quad \forall t > 0.
    % \ed
    As established in \cite{zhao2017novel} and implied by the inherent
coercivity of $f_{\mathrm{bn}}(\mathbf{Q})$ and $f_{\mathrm{bs}}(u)$,
% both bulk
% potentials
% $
% f_{\mathrm{bn}}(\mathbf{Q})
% =
% \frac{\A}{2}\operatorname{tr}(\mathbf{Q}^2)
% -
% \frac{\B}{3}\operatorname{tr}(\mathbf{Q}^3)
% +
% \frac{\C}{4}\bigl(\operatorname{tr}(\mathbf{Q}^2)\bigr)^2
% $
% and $f_{\mathrm{bs}}(u)$ are bounded from below.
% ince the leading quartic terms in
% $f_{\rm bn}$ and $f_{\rm bs}$ have positive coefficients,
there exists a
constant $C_f>0$ such that
\ba\label{eqc6}
f_{\mathrm{bn}}(\mathbf{Q})
\ge \frac \C 8 |\mathbf{Q}|_F^4 -C_f,
\quad
f_{\mathrm{bs}}(u)\ge \frac c 8 u^4-C_f.
\ed
Thus, by the energy dissipation law \eqref{eq_dissipation} and \eqref{eqc6}, we have
\begin{equation*}
    \frac{K}{2}\|\nabla \mathbf{Q}\|_{L^2}^2
    +\frac \C 8 \|\mathbf{Q}\|_{L^4}^4+
    B_0
    \left\|
    D^2 u + q^2\mathbf{M}u
    \right\|_{L^2}^2
    +
    \frac{c}{8}\|u\|_{L^4}^4
    \le
    E[\mathbf{Q}_0,u_0]
    +
    C_f|\Omega| ,
\end{equation*}
which implies
$\mathbf{Q}\in L^\infty(0,T;\mathbf{H}^1(\Omega,\mathcal{S}^{(d)}))$ and
 $u\in L^\infty(0,T;L^4(\Omega))$.
 Then we obtain
\begin{align*}
    B_0\|D^2u\|_{L^2}^2
    &\le
    2
    B_0\left\|
    D^2u+q^2\mathbf{M}u
    \right\|_{L^2}^2
    +
    2 B_0
    \left\|
    q^2\mathbf{M}u
    \right\|_{L^2}^2 \\& \leq
    2
    (E[\mathbf{Q}_0,u_0]
    +
    C_f|\Omega|)+2B_0 q^4
    \left\|
    \mathbf{M}
    \right\|_{L^4}^2
     \left\|
   u
    \right\|_{L^4}^2
    \\& \leq C_E ,
\end{align*}
which implies
$u\in L^\infty(0,T;H^2(\Omega))$.
% The proof is complete.
Consequently, we have the following
 bound for the coupling term:
\begin{align*}
    -C_E\le\int_{\Omega}
    2B_0(D^2u:q^2\mathbf{M}u)
    \,d\mathbf{x}&+B_0
    \left\|
    q^2\mathbf{M}u
    \right\|_{L^2}^2
    \\=
   B_0\left\|
    D^2u+q^2\mathbf{M}u
    \right\|_{L^2}^2
    &-
    B_0
    \|D^2u\|_{L^2}^2 \le E(\mathbf{Q}_0,u_0)
    +
    C_f|\Omega| .
\end{align*}
Since the leading quartic terms in
$f_{\rm bn}$ and $f_{\rm bs}$ have positive coefficients, there exists a
constant $C_\kappa>0$ such that
\[
f_{\rm bn}(\mathbf Q)-\frac{\kappa}{2}|\mathbf Q|_F^2\ge -C_\kappa,
\qquad
f_{\rm bs}(u)-\frac{\kappa}{2}u^2\ge -C_\kappa .
\]
Along the exact solution trajectory, there exist positive constants $C_*$ and $C^*$ such that
\begin{equation*}
\begin{aligned}
    -C_*
    \le&
    E_1[\mathbf{Q},u]
    % \coloneqq
    % \int_{\Omega}
    % 2B_0(D^2u:q^2\mathbf{M}u)
    % \,d\mathbf{x}
    % +
    % B_0
    % \left\|
    % q^2\mathbf{M}u
    % \right\|_{L^2}^2
    % \\&\qquad\qquad
    % +
    % \int_{\Omega}
    % f_{\mathrm{bn}}(\mathbf{Q})
    % +
    % f_{\mathrm{bs}}(u)
    % -
    % \frac \kappa2(|\mathbf{Q}|_F^2+|u|^2)
    % d\mathbf{x}
    \le
    C^* .
\end{aligned}
\end{equation*}
\end{proof}
Let $\zeta\in C^1(\mathbb R;(0,\infty))$ satisfy $\zeta'\ge0$ on
$\mathbb R$. By introducing the scalar auxiliary variable
$s(t)=E_1[\mathbf{Q}(t),u(t)]$, we define the modified energy
$\mathcal E$ and the scalar factor $g$ by
\begin{equation*}
    \mathcal{E}[\mathbf{Q},u,s]
    :=
    \frac{K}{2}\|\nabla\mathbf{Q}\|_{L^2}^2
    +
    B_0\|\Delta u\|_{L^2}^2
    +
    \frac{\kappa}{2}
    (\|u\|_{L^2}^2+\|\mathbf{Q}\|_{L^2}^2)
    +
    s,
\end{equation*}
and
$g(\mathbf{Q},u,s)
    :=
    \zeta(s)/
    \zeta(E_1[\mathbf{Q},u])$.
Here $\|\Delta u\|_{L^2}^2=\|D^2u\|_{L^2}^2$ under periodic boundary
conditions.
Then the original system \eqref{eq1_9} can be equivalently reformulated
into the following coupled system for the unknowns $(\mathbf{Q},u,s)$:
\begin{align}
\label{eq_sys_Q}
\frac{\partial \mathbf{Q}}{\partial t}
&=
K\Delta\mathbf{Q}-\kappa\mathbf{Q}
-
g(\mathbf{Q},u,s)\mathbf{H}(\mathbf{Q},u),
\\
\frac{\partial u}{\partial t}
&=
-2B_0\Delta^2u-\kappa u
-
g(\mathbf{Q},u,s)\mu(\mathbf{Q},u),
\\
\frac{d s}{dt}
&=
g
\left(
\int_{\Omega}
\mathbf{H}(\mathbf{Q},u):\frac{\partial\mathbf{Q}}{\partial t}
\,d\mathbf{x}
+
\int_{\Omega}
\mu(\mathbf{Q},u)\frac{\partial u}{\partial t}
\,d\mathbf{x}
\right),
\end{align}
where $\mathbf H=\mathbf H_{\rm p}(\mathbf Q,u)+\mathbf H_{\rm d}( u)$ and $\mu=\mu_{\rm p}(\mathbf Q,u)+\mu_{\rm d}(\mathbf Q,u)$ with the constituent terms defined by
\begin{small}
\ba \label{eqd1}
\mathbf H_{\rm p}
={}&
(\A-\kappa)\mathbf Q
-
\B
\left(
\mathbf Q^2
-
\frac{\operatorname{tr}(\mathbf Q^2)}{d}\mathbf I_d
\right)
+
\C\operatorname{tr}(\mathbf Q^2)\mathbf Q
+
2B_0q^4
\frac{\mathbf Q}{s_+^2}
u^2,
\\
\mathbf H_{\rm d}
={}&
\frac{2B_0q^2}{s_+}
\left(
uD^2u
-
\frac{\operatorname{tr}(uD^2u)}{d}\mathbf I_d
\right),
\mu_{\rm p}
={}
(a-\kappa)u
+
bu^2
+
cu^3
+
2B_0
|q^2\mathbf M|_F^2u,
\\
\mu_{\rm d}
={}&
2B_0D^2u:(q^2\mathbf M)
+
2B_0q^2
\nabla\cdot
\left(
\nabla\cdot(\mathbf M u)
\right).
\ed
\end{small}
% \begin{remark}
% The function $\zeta$ can be chosen flexibly within the GSAV framework.
% Typical admissible choices include
% \[
% \zeta(x)=e^x,\qquad
% \zeta(x)=1+\tanh x,\qquad
% \zeta(x)=\frac{\pi}{2}+\arctan x.
% \]
% \end{remark}
\subsection{Spatial Discretization and Discrete Function Spaces}
% The details of the spatial discretization are provided in Supplementary
% Material~\ref{se1}.
To simplify the notation, we consider the problem \eqref{eq1_9} in the domain $\Omega = [0, L_d]^3$. Given a positive integer $J$, the uniform mesh size for each spatial direction is set to be $h = L_d/J$. For a regular grid, all variables are stored at the primary mesh points. We denote by $\mathbf{E}$ the set of mesh points, defined by
\begin{equation*}
    \mathbf{E} = \left\{ (x_p, y_q, z_r) = (ph, qh, rh) \mid p, q, r = 0, 1, \dots, J \right\}.
\end{equation*}
The corresponding periodic grid function space is given by
\begin{equation*}
    E_h^{\text{per}} \!= \{ U : \mathbf{E} \to \mathbb{R} \mid U_{0,q,r} \!= U_{J,q,r}, \, U_{p,0,r} = U_{p,J,r}, \, U_{p,q,0} \!= U_{p,q,J}, 0 \le p, q, r \le J \}.
\end{equation*}
For ease of notation, we handle the periodic boundary conditions by assigning the values of the corresponding interior nodes to the ghost nodes. Specifically, for any $U \in E_h^{\text{per}}$, we have
\begin{equation*}
    U_{-1,q,r} = U_{J-1,q,r}, \quad U_{J+1,q,r} = U_{1,q,r}, \quad \forall 0 \le q, r \le J.
\end{equation*}
Analogous periodic extensions apply to the $q$ and $r$ directions.

Then, we define the forward, backward, and central difference operators for the grid function $U \in E_h^{\text{per}}$ along the $x$-direction as follows:
% \begin{small}
    \begin{align*}
    (D_1^+ U)_{p,q,r}\! =\! \frac{U_{p+1,q,r} \!-\! U_{p,q,r}}{h},
    (D_1^- U)_{p,q,r} \!= \!\frac{U_{p,q,r} \!- \!U_{p-1,q,r}}{h},
    D_1^c U\!=\!\frac{1}{2}(D_1^+ U \!+\! D_1^- U\!).
\end{align*}
% \end{small}
where the difference operators $D_2^{\pm,c}$ and $D_3^{\pm,c}$ along the $y$- and $z$-directions are defined analogously. Moreover, we denote the mixed central difference as $D_{k,l}^c U = D_k^c D_l^c U$ for $k,l \in \{1,2,3\}$.

In a collocated grid, to maintain a compact stencil for the Laplacian, we define the discrete gradient operator $\nabla_h : E_h^{\text{per}} \to (E_h^{\text{per}})^3$ using forward differences:
\begin{equation*}
    (\nabla_h U)_{p,q,r} = ( (D_1^+ U)_{p,q,r}, (D_2^+ U)_{p,q,r}, (D_3^+ U)_{p,q,r} )^T,
\end{equation*}
and the corresponding discrete divergence operator $\nabla_h \cdot : (E_h^{\text{per}})^3 \to E_h^{\text{per}}$ using backward differences:
\begin{equation*}
    \nabla_h \cdot (U^{(1)}, U^{(2)}, U^{(3)})^T = D_1^- U^{(1)} + D_2^- U^{(2)} + D_3^- U^{(3)}.
\end{equation*}
In addition, for any scalar-valued grid function
$U\in E_h^{\mathrm{per}}$, we define its discrete Hessian as the
matrix-valued grid function
\begin{equation*}
    D_h^2U
    :=
    \left(D_{i,j}^cU\right)_{i,j=1}^3,
    \qquad
    D_{i,j}^cU:=D_i^cD_j^cU.
\end{equation*}
Because all variables are collocated, the discrete inner products no longer require averaging operators. They are simply given by:
\begin{align*}
    \langle U, V \rangle_h &= h^3 \sum_{p,q,r=1}^{J} U_{p,q,r} V_{p,q,r} \quad \forall U, V \in E_h^{\text{per}},\\
    [\mathbf{U}, \mathbf{V}]_h &= \sum_{k=1}^3 \langle U^{(k)}, V^{(k)} \rangle_h \quad \forall \mathbf{U}, \mathbf{V} \in (E_h^{\text{per}})^3.
\end{align*}
Then, for any $U \in E_h^{\text{per}}$, the corresponding discrete
$L^2$, $L^\infty$, $H^1$, and $H^2$ norms are defined by
% \begin{small}
    \begin{align*}
    \|U\|_h^2       & = \langle U, U \rangle_h, \quad
    \|\nabla_h U\|_h^2 = [\nabla_h U, \nabla_h U]_h ,\quad
        \|U\|_{H_h^1}^2  = \|U\|_h^2 + \|\nabla_h U\|_h^2, \\
    % = \sum_{k=1}^3 \langle D_k^+ U, D_k^+ U \rangle_h, \\
    \|U\|_{H_h^2}^2 &= \!\|U\|_{H_h^1}^2 + \|\Delta_h U\|_h^2,
    \|U\|_{H_h^4}^2 \!= \!\|U\|_{H_h^2}^2 + \|\Delta_h^2 U\|_h^2,
    \|U\|_\infty \!= \!\max_{0 \le p,q,r \le J} |U_{p,q,r}|.
\end{align*}
% \end{small}
The discrete Laplacian is defined as $\Delta_h U = \sum_{k=1}^3 D_k^+ D_k^- U$, and the discrete biharmonic operator is defined as $\Delta_h^2 U = \Delta_h (\Delta_h U)$.
Using summation-by-parts on the collocated grid,  for any $U, V \in E_h^{\text{per}}$, we have
\begin{equation*}
    \begin{aligned}
        \langle D_k^- D_k^+ U, V \rangle_h & = - \langle D_k^+ U, D_k^+ V \rangle_h = \langle U, D_k^- D_k^+ V \rangle_h,                                      \\
        \langle D_{k,l}^c U, V \rangle_h   & = - \langle D_l^c U, D_k^c V \rangle_h = - \langle D_k^c U, D_l^c V \rangle_h = \langle U, D_{k,l}^c V \rangle_h,
    \end{aligned}
\end{equation*}
which also implies that
\begin{equation}\label{eq_c1}
    \langle -\Delta_h U, V \rangle_h =  [\nabla_h U, \nabla_h V]_h = \langle U, -\Delta_h V \rangle_h,\quad
    \langle \Delta_h^2 U, V \rangle_h = \langle U, \Delta_h^2 V \rangle_h.
\end{equation}

For tensor-valued grid functions, we first introduce the general
matrix-valued grid function space
\begin{equation*}
    \mathbb{M}_h^{(3)}
    :=
    \left\{
    \bm{\Phi}_h=(\Phi_h^{ij})_{i,j=1}^3
    \ \middle|\
    \Phi_h^{ij}\in E_h^{\mathrm{per}},
    \quad i,j=1,2,3
    \right\}.
\end{equation*}
The discrete function space for symmetric and traceless
$\mathbf{Q}$-tensor fields is then defined as
\begin{equation*}
    \mathcal{S}_h^{(3)}
    :=
    \left\{
    \bm{\Phi}_h\in\mathbb{M}_h^{(3)}
    \ \middle|\
    \bm{\Phi}_h^{T}=\bm{\Phi}_h,\quad
    \operatorname{tr}(\bm{\Phi}_h)=0
    \right\}.
\end{equation*}
The discrete Frobenius product  and the discrete $L^4$ norm for any $\bm{\Phi}_h, \bm{\Psi}_h \in  \mathbb{M}_h^{(3)}$ are defined as follows:
\begin{equation*}
    \langle \bm{\Phi}_h, \bm{\Psi}_h \rangle_h =  \sum_{i,j=1}^3 \langle {\Phi}_h^{i,j}, {\Psi}_h^{i,j} \rangle_h,\quad  \|\bm{\Phi}_h\|_{L_h^4}^4 = \big\langle |\bm{\Phi}_h|_F^2, |\bm{\Phi}_h|_F^2  \big\rangle_h.
\end{equation*}
For any $\bm{\Phi}_h \in  \mathbb{M}_h^{(3)}$, the discrete $L^2$,
$L^\infty$, $H^1$, and $H^2$ norms are defined by
\begin{equation*}
\begin{aligned}
\|\bm{\Phi}_h\|_h^2 &:= {\sum_{i,j=1}^3 \| {\Phi}_h^{i,j}\|_h^2}, \quad
\|\nabla_h \bm{\Phi}_h\|_h^2 := {\sum_{i,j=1}^3 \| \nabla_h{\Phi}_h^{i,j}\|_h^2},\\ \|\Delta_h \bm{\Phi}_h\|_h^2 &:= {\sum_{i,j=1}^3 \|\Delta_h {\Phi}_h^{i,j}\|_h^2},\quad
\|\bm{\Phi}_h\|_\infty := \max_{0 \le p,q,r \le J} |(\bm{\Phi}_{h})_{p,q,r}|_F    , \\
\|\bm{\Phi}_h\|_{H_h^1}^2 &:= {\|\bm{\Phi}_h\|_h^2 + \|\nabla_h \bm{\Phi}_h\|_h^2}
, \quad
\|\bm{\Phi}_h\|_{H_h^2}^2 := {\|\bm{\Phi}_h\|_{H_h^1}^2 + \|\Delta_h \bm{\Phi}_h\|_h^2}.
\end{aligned}
\end{equation*}

For any
$\bm{\Phi}_h\in\mathbb{M}_h^{(3)}$, we define the discrete central
double-divergence operator directly by
\begin{equation*}
    \operatorname{div}_{h,c}^{\,2}\bm{\Phi}_h
    :=
    \sum_{i,j=1}^3D_i^cD_j^c\Phi_h^{ij}.
\end{equation*}
By the periodic summation-by-parts formula, the discrete Hessian and
the discrete double-divergence operator satisfy
\begin{equation}\label{eq_discrete_hessian_adjoint}
    \left\langle D_h^2U,\bm{\Phi}_h\right\rangle_h
    =
    \left\langle
    U,\operatorname{div}_{h,c}^{\,2}\bm{\Phi}_h
    \right\rangle_h,
    \qquad
    U\in E_h^{\mathrm{per}},
    \quad
    \bm{\Phi}_h\in\mathbb{M}_h^{(3)}.
\end{equation}
Moreover, by the discrete Fourier transform and Parseval's identity,
we have
\ba\label{eq_discrete_hessian_bound}
    \|D_h^2U\|_h
    &\le
    \|\Delta_hU\|_h,
    \qquad U\in E_h^{\mathrm{per}}.
\ed

Finally, the discrete Sobolev spaces for $\mathbf{Q}_h$ and $u_h$  are defined as
\begin{equation*}
\begin{aligned}
\mathbf{H}_h^2(\Omega, \mathbb{M}_h^{(3)}) &= \{\bm{\Phi}_h \in \mathbb{M}_h^{(3)} \mid \|\bm{\Phi}_h\|_{H_h^2} < \infty\}, \quad \mathbf{H}_h^2(\Omega,\mathcal{S}_h^{(3)})
:=
\mathbf{H}_h^2(\Omega,\mathbb{M}_h^{(3)})
\cap\mathcal{S}_h^{(3)}.\\
H_h^4(\Omega) &= \{U \in E_h^{\text{per}} \mid \|U\|_{H_h^4} < \infty\}.
\end{aligned}
\end{equation*}

The two-dimensional discretization is obtained analogously by
omitting the $z$-direction and restricting all spatial and tensor
indices to $\{1,2\}$, with the discrete operators and norms understood
accordingly. Hereafter, $\mathbb{M}_h^{(d)}$ and
$\mathcal{S}_h^{(d)}$ denote the corresponding spaces for
$d\in\{2,3\}$.
\subsection{Fully discrete GSAV-EI scheme}
Let $\tau > 0$ be the uniform time step size and $t_n = n\tau$ for $n \ge 0$.
Let $\mathbf{Q}_h^n$, $u_h^n$, and $s^n$ denote the fully discrete numerical approximations of the exact solutions $\mathbf{Q}(t_n)$, $u(t_n)$, and $s(t_n)$, respectively.
The discrete approximations for $E_1(t_n)$ and $g(t_n)$, denoted by $E_{1h}^n$ and $g^n$ respectively, are given by
\begin{equation}\label{eq_s_ex_n}
    \begin{aligned}
        E_{1h}^n
&:=
E_{1h}[\mathbf{Q}_h^n,u_h^n]
:=
         2 B_0 q^2 \langle D_h^2 u_h^n , \bM_h^n u_h^n \rangle_h + B_0 q^4 \| \bM_h^n u_h^n \|_h^2 \\& + \langle f_{\mathrm{bn}}(\mathbf{Q}_h^n), 1 \rangle_h+ \langle f_{\mathrm{bs}}(u_h^n), 1 \rangle_h-\frac \kappa2(\|u_h^n\|_h^2+\|\mathbf{Q}_h^n\|_h^2),\\
         g^n &=  \zeta(s^n)/\zeta(E_{1h}^n),
    \end{aligned}
\end{equation}
where $\bM_h^n = \frac{\mathbf{Q}_h^n}{s_+} + \frac{\mathbf{I}_d}{d}$ is the discrete evaluation of  $\bM$.
% and $g^n= e^{s^n}/e^{E_{1h}^n}$ is the discrete evaluation of the exponential stabilization factor $g$ at time $t_n$.

Next, we define the following $\kappa$-stabilized discrete operators $\mLQ: \mathcal{S}_h^{(d)} \to \mathcal{S}_h^{(d)}$ and $\mLu: E_h^{\text{per}} \to E_h^{\text{per}}$ as follows:
\begin{align} \label{eq_L}
    \mLQ\mathbf{Q}_h^n = -K\Delta_h \mathbf{Q}_h^n +  \kappa \mathbf{Q}_h^n,\quad
    \mLu u_h^n = 2B_0\Delta_h^2 u_h^n +  \kappa  u_h^n.
\end{align}
The  nonlinear operators $\mNQ: \mathcal{S}_h^{(d)} \times E_h^{\text{per}} \times \mathbb{R} \to \mathcal{S}_h^{(d)}$ and $\mNu: \mathcal{S}_h^{(d)} \times E_h^{\text{per}} \times \mathbb{R} \to E_h^{\text{per}}$ are defined as follows:
\ba \label{eq_N_Q_def}
\mathbf{N}_{h}^n = \mathbf{N}_{h}(\mathbf{Q}_h^n, u_h^n, s^n) = -g^n \muQ^n , \quad
\mathcal{N}_{h}^n = \mathcal{N}_{h}(\mathbf{Q}_h^n, u_h^n, s^n) = -g^n \mu_{h}^n ,
\ed
where $\muQ^n$ and $\mu_{h}^n$ are defined as
{
\ba\label{eqc8}
\muQ^n
&=
\mathbf H_{\rm p}(\mathbf Q_h^n,u_h^n)
+
\mathbf H_{{\rm d},h}(u_h^n),~
\mathbf H_{{\rm d},h}(u_h^n)
:=
\frac{2B_0q^2}{s_+}
\left(
u_h^nD_h^2u_h^n
-
\frac{\operatorname{tr}(u_h^nD_h^2u_h^n)}{d}\mathbf I_d
\right),
\\
\mu_h^n
&=
\mu_{\rm p}(\mathbf Q_h^n,u_h^n)
+
\mu_{{\rm d},h}^n,~
\mu_{{\rm d},h}^n
:=
2B_0q^2\left(
\mathbf M_h^n:D_h^2u_h^n
+
\operatorname{div}_{h,c}^{\,2}
\left(\mathbf M_h^n u_h^n\right)
\right).
\ed}

Let $ \mathcal{I}_h$ denote the standard grid restriction  operator for continuous fields. We initialize the numerical solution by
\begin{equation*}
    \mathbf{Q}_h^0=\mathcal{I}_h\mathbf{Q}(0),
    \qquad
    u_h^0=\mathcal{I}_h u(0),
    \qquad
    s^0=E_{1h}^0=E_{1h}[\mathbf{Q}_h^0,u_h^0].
\end{equation*}
so that $g^0=1$.
Following  \cite{ju2022generalized}, we construct a first-order fully discrete GSAV-EI scheme for system \eqref{eq_sys_Q}  as follows: given the initial data $(\bQ^0, u_h^0, s^0)$, for each time step $n \ge 0$, we update the numerical solution to the next time level $t_{n+1}$ by
\begin{align}
    \bQ^{n+1} & = e^{-\tau \mLQ} \bQ^{n} + \tau \varphi(\tau \mLQ) \mNQ^n, \label{eq4_6}                                                                                       \\
    \bu^{n+1} & = e^{-\tau \mLu} \bu^{n} + \tau \varphi(\tau \mLu) \mNu^n,\label{eq4_7}                                                                                        \\
    {s}^{n+1} & = s^n -  \left\langle \mNQ^n, \bQ^{n+1} - \bQ^{n} \right\rangle_h -\left\langle \mNu^n, \bu^{n+1} - \bu^{n} \right\rangle_h ,\label{eq4_8}
\end{align}
where $\varphi(z) = (1-e^{-z})/z$.

To address the severe time-step restriction required by the original GSAV-EI scheme \cite{ju2022generalized}, we  reformulate \eqref{eq4_6}-\eqref{eq4_8} into a  quasi-implicit backward Euler-type structure that eliminates this restriction and admits a rigorous fully discrete error analysis.
Applying the operator $\frac{1}{\tau}(\mq(\tau \mLQ)+\tau \mLQ)$ to both sides of \eqref{eq4_6}, where $\mq(z) = z / (e^z - 1)$, we have
\begin{align}\label{eq4_11}
    \frac{1}{\tau}(\mq(\tau \mLQ)+\tau \mLQ) (\bQ^{n+1} - e^{-\tau \mLQ} \bQ^{n}) = & \frac{1}{\tau}(\mq(\tau \mLQ)+\tau \mLQ) \tau \varphi(\tau \mLQ) \mNQ^n.
\end{align}
Noting that $\mq(\tau \mLQ)+\tau \mLQ=  \frac{\tau \mLQ}{e^{\tau \mLQ} - 1} + \tau \mLQ =  \frac{\tau \mLQ e^{\tau \mLQ}}{e^{\tau \mLQ} - 1} $,
the identity \eqref{eq4_11} can be rewritten as
\ba
\mq(\tau \mLQ) \frac{\bQ^{n+1} - \bQ^n}{\tau} + \mLQ \bQ^{n+1} = \mNQ^n.
\ed
% Similarly, applying the operator $\frac{1}{\tau}(\mq(\tau \mLu)+\tau \mLu)$ to both sides of \eqref{eq4_7}, we have
% \ba
% \mq(\tau \mLu) \frac{\bu^{n+1} - \bu^{n}}{\tau} + \mLu \bu^{n+1} = \mNu^n.
% \ed
Taking the same approach for \eqref{eq4_7},  the   GSAV-EI scheme for  system \eqref{eq_sys_Q} reads as follows: given the initial data $(\bQ^0, u_h^0, s^0)$, for $n \ge 0$, the numerical solution $(\bQ^{n+1}, u_h^{n+1}, s^{n+1}) $  is recursively updated via
\begin{align}
   & \mq(\tau \mLQ)\dtau \bQ^{n+1} + \mLQ \bQ^{n+1} = \mNQ^n, \label{eq_scheme_Q}  \\
    &\mq(\tau \mLu) \dtau \bu^{n+1} + \mLu \bu^{n+1} = \mNu^n, \label{eq_scheme_u} \\
    &\ts^{n+1} = s^n -  \left\langle \mNQ^n, \delta \bQ^{n+1} \right\rangle_h - \left\langle \mNu^n, \delta \bu^{n+1} \right\rangle_h .\label{eq_scheme_s}
\end{align}
where  the notation $\delta \phi^{n+1} = \phi^{n+1} - \phi^n$ and $\dtau \phi^{n+1} = \frac{\delta \phi^{n+1}}{\tau}$ are used for any $\phi \in \{\bQ, \bu,\ts\}$.
\iffalse
\begin{remark}
    The standard implicit backward Euler scheme is renowned for its well-established and elegant theoretical frameworks, particularly concerning the construction of appropriate test functions for error estimates and the rigorous derivation of energy dissipation laws. By employing the aforementioned system reformulation, our proposed scheme seamlessly inherits these analytical advantages. The profound mathematical benefits of this transformation will become persistently evident throughout the subsequent rigorous proofs.
\end{remark}
\fi
% To avoid excessive deviation of $s^{n+1}$ from the discrete physical energy $E_{1h}^{n+1}$, we introduce a relaxation step to update $s^{n+1}$ as a convex combination. Specifically, this relaxation step is designed by combining the efficient evaluation criterion of the EOP-SAV method and the robust update strategy of the R-GSAV scheme.
\subsection{Unconditional energy stability analysis}
%In this section, we establish the unconditional energy stability of the GSAV-EI scheme \eqref{eq_scheme_Q}-\eqref{eq_scheme_s}.
For ease of presentation, we introduce some notation for the modified discrete energy norms.
Utilizing the fact that  $\mLQ$ and $ \mLu$ are symmetric positive semidefinite,   we define the following inner products and the corresponding energy norms:
\begin{equation*}
\begin{aligned}
\ilQ{ \cdot }  := \langle \mathcal{L}_h \cdot,\cdot  \rangle_h= K  \norm{\nabla_h \cdot }_h^2 +  \kappa  \norm{\cdot}_h^2, \quad \ilu{\cdot} :=\langle \mathcal{D}_h \cdot,\cdot  \rangle_h= 2B_0  \norm{\Delta_h \cdot }_h^2 + \kappa \norm{\cdot}_h^2.
\end{aligned}
\end{equation*}
Furthermore,  we construct the  weighted energy norms  for any $\bm{\Phi}_h \in \mathbf{H}_h^2(\Omega,  \mathcal{S}_h^{(d)})$:
    \begin{equation*}
\begin{aligned}
\iaQ{\bm{\Phi}_h} &:= \langle \mathcal{Q}(\tau \mathcal{L}_h) \bm{\Phi}_h, \bm{\Phi}_h \rangle_h,\quad
\ibQ{\bm{\Phi}_h} := \langle \mathcal{Q}_1(\tau \mathcal{L}_h) \bm{\Phi}_h, \bm{\Phi}_h \rangle_h ,\\
\icQ{\bm{\Phi}_h} &:=  \langle \mathcal{Q}(\tau \mathcal{L}_h)  \mathcal{L}_h   \bm{\Phi}_h, \bm{\Phi}_h \rangle_h,
% \quad\idQ{\mathbf{U}} :=  \langle \mathcal{Q}_1(\tau \mathcal{L}_h)  \mathcal{L}_h   \mathbf{U}, \mathbf{U} \rangle_h,
\end{aligned}
\end{equation*}
where the  function $\mathcal{Q}_1$ is defined as
\begin{equation}\label{eq_inner_Q}
    \mathcal{Q}_1(z) =\mathcal{Q}(z) + \frac{z}{2} = \frac{z}{e^z - 1}+ \frac{z}{2}= \frac{z}{2}\text{coth}\left( \frac{z}{2} \right)
    \geq 1, \quad   z\geq 0.
\end{equation}
Analogous norms of $\mathcal{D}_h$ are defined by  replacing $\mathcal{L}_h$ with $\mathcal{D}_h$ in the expressions above.
\begin{lemma}\label{lem_norm_equivalence}
    For any  $\bm{\Phi}_h \in \mathbf{H}_h^2(\Omega, \mathcal{S}_h^{(d)})$ and $v_h \in H_h^2(\Omega)$, we have
    \ba\label{eq_norm_bounds_1}
    &\|\mathcal{Q}(\tau \mathcal{L}_h) \bm{\Phi}_h\|_h \le \|\bm{\Phi}_h\|_{\mathcal{Q}_\mathcal{L}} \le \|\bm{\Phi}_h\|_h \le \|\bm{\Phi}_h\|_{\mathcal{Q}_\mathcal{L}^1},\quad\|\bm{\Phi}_h\|_{\mathcal{Q}_{\mathcal L}^2}
\le
\|\bm{\Phi}_h\|_{\mathcal L_h},\\
    &\|\mathcal{Q}(\tau \mathcal{D}_h) v_h\|_h \le \|v_h\|_{\mathcal{Q}_\mathcal{D}} \le \|v_h\|_h \le \|v_h\|_{\mathcal{Q}_\mathcal{D}^1}.
%     \\
%      & \|\mathbf{U}\|_{\mathcal{Q}_{\mathcal L}}
% \le
% \|\mathbf{U}\|_{\mathcal L_h}
% \le
% \|\mathbf{U}\|_{\mathcal{Q}_{\mathcal L}^{3}},
% \quad
% \|v\|_{\mathcal{Q}_{\mathcal D}}
% \le
% \|v\|_{\mathcal D_h}
% \le
% \|v\|_{\mathcal{Q}_{\mathcal D}^{3}} .
    \ed
\end{lemma}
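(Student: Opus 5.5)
The plan is to reduce every inequality to a scalar inequality for the functions $\mathcal{Q}$ and $\mathcal{Q}_1$ on the spectrum of the operators. Under periodic boundary conditions, $\mathcal{L}_h$ and $\mathcal{D}_h$ act componentwise and are diagonalized by the discrete Fourier transform. Their eigenvalues are $\lambda_k = K\sigma_k+\kappa\ge\kappa>0$ and $\rho_k=2B_0\sigma_k^2+\kappa\ge\kappa>0$, where $\sigma_k\ge0$ are the eigenvalues of $-\Delta_h$. The operators $\mathcal{Q}(\tau\mathcal{L}_h)$ and $\mathcal{Q}_1(\tau\mathcal{L}_h)$ are defined by functional calculus, so they are symmetric and share this eigenbasis. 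Applying the transform entrywise to $\bm{\Phi}_h$ also preserves symmetry and tracelessness, since $\mathcal{L}_h$ commutes with transposition and trace. By Parseval, with $\hat{\bm\Phi}_k$ the Fourier coefficients, we get
\[
\|f(\tau\mathcal{L}_h)\bm\Phi_h\|_h^2=\sum_k f(\tau\lambda_k)^2|\hat{\bm\Phi}_k|_F^2,\qquad \langle f(\tau\mathcal{L}_h)\bm\Phi_h,\bm\Phi_h\rangle_h=\sum_k f(\tau\lambda_k)|\hat{\bm\Phi}_k|_F^2 .
\]
Each claimed inequality then follows from a pointwise inequality in $z=\tau\lambda_k\ge0$.

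The scalar facts needed are three. First, $0<\mathcal{Q}(z)=z/(e^z-1)\le1$ for $z\ge0$ (with $\mathcal{Q}(0)=1$ by continuity), which follows from $e^z-1\ge z$. Second, $\mathcal{Q}_1(z)\ge1$, which is \eqref{eq_inner_Q}; one can verify it by noting that $\mathcal{Q}_1(z)=\tfrac z2\coth\tfrac z2$ and $x\coth x\ge1$ since $\tanh x\le x$. Third, because $0<\mathcal{Q}\le1$, we have $\mathcal{Q}^2\le\mathcal{Q}\le1\le\mathcal{Q}_1$.

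With these, the chain $\|\mathcal{Q}(\tau\mathcal{L}_h)\bm\Phi_h\|_h^2\le\|\bm\Phi_h\|_{\mathcal{Q}_\mathcal{L}}^2\le\|\bm\Phi_h\|_h^2\le\|\bm\Phi_h\|_{\mathcal{Q}^1_\mathcal{L}}^2$ follows by comparing the weights $\mathcal{Q}(z)^2\le\mathcal{Q}(z)\le1\le\mathcal{Q}_1(z)$ termwise. The bound $\|\bm\Phi_h\|_{\mathcal{Q}^2_\mathcal{L}}^2=\sum_k\mathcal{Q}(\tau\lambda_k)\lambda_k|\hat{\bm\Phi}_k|_F^2\le\sum_k\lambda_k|\hat{\bm\Phi}_k|_F^2=\|\bm\Phi_h\|^2_{\mathcal{L}_h}$ uses $\mathcal{Q}\le1$ together with $\lambda_k\ge0$. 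The last identity holds because $\langle\mathcal{L}_h\bm\Phi_h,\bm\Phi_h\rangle_h=K\|\nabla_h\bm\Phi_h\|_h^2+\kappa\|\bm\Phi_h\|_h^2$ by \eqref{eq_c1}. The $\mathcal{D}_h$ line is identical, with $\rho_k$ in place of $\lambda_k$; one can also read off $\|v_h\|_{\mathcal{Q}^2_\mathcal{D}}\le\|v_h\|_{\mathcal{D}_h}$, although it is not stated. Taking square roots finishes the proof.

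The only point needing care is the functional-calculus justification. We must check that $\mathcal{L}_h$ and $\mathcal{D}_h$ are symmetric, via \eqref{eq_c1}, and nonnegative, so that an orthonormal eigenbasis exists. We must also check that $\mathcal{Q}$ and $\mathcal{Q}_1$ extend continuously to $z=0$, although here $z\ge\tau\kappa>0$ anyway. Beyond that, the argument is routine.
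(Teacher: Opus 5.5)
Your proposal is correct and follows the paper's proof. Both reduce everything to the scalar inequalities $0\le\mathcal{Q}^2(z)\le\mathcal{Q}(z)\le 1\le\mathcal{Q}_1(z)$ and $0\le\mathcal{Q}(z)z\le z$ for $z\ge 0$, and then lift them to the operator level through the spectral calculus of the symmetric positive semidefinite operators $\mathcal{L}_h$ and $\mathcal{D}_h$; your discrete Fourier diagonalization is simply a concrete realization of the spectral mapping argument the paper invokes.
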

\begin{proof}
    By the definitions of  $\mathcal{Q}$  and $\mathcal{Q}_1$, we can derive the following inequalities:
    \begin{align}\label{eq_a1}
       0\le \mathcal{Q}(z) \le 1,\quad \mathcal{Q}_1(z)  \geq 1 , \quad   z\geq 0.
    \end{align}
    Then we can further derive the following inequalities:
    \begin{equation}\label{eq_a2}
        0 \le \mathcal{Q}^2(z) \le \mathcal{Q}(z) \le 1 \le \mathcal{Q}_1(z),  \quad 0 \le \mathcal{Q}(z)z \le z\le \mathcal{Q}_1(z)z.
    \end{equation}
    Since $\mathcal{L}_h, \mathcal{D}_h$ are  symmetric positive semi-definite operators, we can apply the spectral mapping theorem to extend \eqref{eq_a1}-\eqref{eq_a2} to the  inequalities \eqref{eq_norm_bounds_1}.
    % Applying the spectral mapping theorem to the operator $\tau\mathcal{L}_h$ and $\tau\mathcal{D}_h$, these scalar inequalities naturally extend to the corresponding operator inequalities. For any $\mathbf{U} \in \mathbf{H}_h^2(\Omega, \mathcal{S}^{(d)})$, we have
    % \begin{align*}
    %  \ibQ{\mathbf{U}} &= \langle \mathcal{Q}(\tau \mathcal{L}_h) \mathbf{U}, \mathbf{U} \rangle_h + \frac{\tau}{2} \langle \mathcal{L}_h \mathbf{U}, \mathbf{U} \rangle_h \\
    % & \leq \langle  \mathbf{U}, \mathbf{U} \rangle_h + \frac{\tau}{2} \langle \mathcal{L}_h \mathbf{U}, \mathbf{U} \rangle_h\leq \frac{1}{2}g^n \kappa \langle  \mathbf{U}, \mathbf{U} \rangle_h +  \frac{1}{2}\langle \mathcal{L}_h \mathbf{U}, \mathbf{U} \rangle_h\leq \langle \mathcal{L}_h \mathbf{U}, \mathbf{U} \rangle_h.
    % \end{align*}
    % The  inequalities for $V\in E_h^{\text{per}}$ can be derived similarly, which completes the proof.
\end{proof}

To proceed, we define the modified discrete energy $\mathcal{E}_h$  as follows:
\ba\label{energy_discrete}
\mathcal{E}_h[\mathbf{Q}_h, u_h,s] = \frac{K}{2} \|\nabla_h \mathbf{Q}_h\|_h^2  + B_0 \left\| \Delta_h u_h  \right\|_h^2
+\frac \kappa2(\|\mathbf{Q}_h\|_h^2+\|u_h\|_h^2)
+ s.
\ed

\begin{theorem}[Unconditional Energy Stability]\label{them3_1}
Denote $\mathcal{E}_h^n = \mathcal{E}_h[\bQ^n, \bu^n, s^n]$ as the modified discrete energy at time $t_n$.
 For any $\kappa > 0$ and  $\tau > 0$, the GSAV-EI scheme \eqref{eq_scheme_Q}--\eqref{eq_scheme_s} satisfies the following discrete energy dissipation law:
    \ba
    \mathcal{E}_h^{n+1} \le \mathcal{E}_h^n,\quad \forall n \ge 0.
    \ed
\end{theorem}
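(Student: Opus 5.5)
We test \eqref{eq_scheme_Q} with $\delta \bQ^{n+1}=\bQ^{n+1}-\bQ^n$ and \eqref{eq_scheme_u} with $\delta \bu^{n+1}$, using the discrete inner product $\langle\cdot,\cdot\rangle_h$. This is the standard backward-Euler energy argument, which the reformulation was designed to make available.

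\textbf{Step 1: the $\mathbf{Q}$-equation.} The first term gives $\frac1\tau\langle \mq(\tau\mLQ)\delta\bQ^{n+1},\delta\bQ^{n+1}\rangle_h=\frac1\tau\iaQ{\delta\bQ^{n+1}}\ge0$. Here we use that $\mq(\tau\mLQ)$ is a nonnegative function of the symmetric positive semidefinite operator $\mLQ$, so by the spectral mapping argument in Lemma \ref{lem_norm_equivalence} it is symmetric and nonnegative. For the second term, the symmetry of $\mLQ$ from \eqref{eq_c1} gives the polarization identity
\begin{equation*}
\langle \mLQ\bQ^{n+1},\bQ^{n+1}-\bQ^n\rangle_h=\tfrac12\ilQ{\bQ^{n+1}}-\tfrac12\ilQ{\bQ^{n}}+\tfrac12\ilQ{\delta\bQ^{n+1}}.
\end{equation*}
By definition, $\frac12\ilQ{\bQ}=\frac K2\|\nabla_h\bQ\|_h^2+\frac\kappa2\|\bQ\|_h^2$.

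\textbf{Step 2: the $u$-equation.} In the same way, the $u$-equation yields $\frac1\tau\iau{\delta\bu^{n+1}}+\frac12\ilu{\bu^{n+1}}-\frac12\ilu{\bu^n}+\frac12\ilu{\delta\bu^{n+1}}$, with $\frac12\ilu{\bu}=B_0\|\Delta_h\bu\|_h^2+\frac\kappa2\|\bu\|_h^2$. Summing the two identities, the right-hand sides become $\langle\mNQ^n,\delta\bQ^{n+1}\rangle_h+\langle\mNu^n,\delta\bu^{n+1}\rangle_h$. By \eqref{eq_scheme_s}, this sum equals exactly $-(s^{n+1}-s^n)$.

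\textbf{Step 3: conclusion.} Combining the steps gives
\begin{equation*}
\mathcal E_h^{n+1}-\mathcal E_h^n=-\frac1\tau\left(\iaQ{\delta\bQ^{n+1}}+\iau{\delta\bu^{n+1}}\right)-\frac12\left(\ilQ{\delta\bQ^{n+1}}+\ilu{\delta\bu^{n+1}}\right)\le0.
\end{equation*}
This holds for every $\tau,\kappa>0$, since no restriction on $\tau$ or on $g^n$ was used.

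\textbf{Main obstacle.} The only delicate point is justifying the operator calculus. We must check three things. First, $\mq(\tau\mLQ)$ maps $\mathcal S_h^{(d)}$ to itself and is symmetric and nonnegative. Second, the reformulated scheme is genuinely equivalent to \eqref{eq4_6}--\eqref{eq4_8}, which requires $\frac{\tau\mLQ e^{\tau\mLQ}}{e^{\tau\mLQ}-1}$ to be invertible. Third, the value at the zero eigenvalue, $\mq(0)=1$, must be handled correctly.

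All three follow by diagonalizing. $\mLQ$ and $\mLu$ commute with the discrete Fourier basis and act componentwise on tensors, so they preserve symmetry and trace. They are strictly positive since $\kappa>0$. The scalar functions $\mq(z)$ and $z e^z/(e^z-1)$ are positive on $[0,\infty)$, so the needed nonnegativity and invertibility follow from \eqref{eq_a1}.
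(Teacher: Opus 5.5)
Your proposal is correct and follows the paper's proof essentially line by line. Testing \eqref{eq_scheme_Q} and \eqref{eq_scheme_u} with $\delta\mathbf{Q}_h^{n+1}$ and $\delta u_h^{n+1}$, using the polarization identity for the symmetric operators $\mathcal{L}_h$ and $\mathcal{D}_h$, and cancelling the nonlinear terms through the $s$-update \eqref{eq_scheme_s} gives exactly the paper's identity $\mathcal{E}_h^{n+1}-\mathcal{E}_h^{n}=-\frac{1}{\tau}\bigl(\|\delta\mathbf{Q}_h^{n+1}\|_{\mathcal{Q}_{\mathcal{L}}}^2+\|\delta u_h^{n+1}\|_{\mathcal{Q}_{\mathcal{D}}}^2\bigr)-\frac{1}{2}\bigl(\|\delta\mathbf{Q}_h^{n+1}\|_{\mathcal{L}_h}^2+\|\delta u_h^{n+1}\|_{\mathcal{D}_h}^2\bigr)\le 0$.
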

\begin{proof}
     Taking the inner product of \eqref{eq_scheme_Q} with $\delta \bQ^{n+1}$ and \eqref{eq_scheme_u} with $\delta \bu^{n+1}$, we obtain
    \ba \label{eq34}
    \langle \mLQ\bQ^{n+1}, \delta \bQ^{n+1} \rangle_h &=-\frac{1}{\tau}\langle \mq(\tau \mLQ) \delta \bQ^{n+1}, \delta \bQ^{n+1} \rangle_h+\id{\mNQ^n}{\delta \bQ^{n+1}}
    \\&= -\frac{1}{\tau}\iaQ{\delta \bQ^{n+1}}  +\id{\mNQ^n}{\delta \bQ^{n+1}}, \\
    \langle \mLu\bu^{n+1}, \delta \bu^{n+1} \rangle_h &= -\frac{1}{\tau}\langle \mq(\tau \mLu) \delta \bu^{n+1}, \delta \bu^{n+1} \rangle_h + \langle \mNu^n, \delta \bu^{n+1} \rangle_h \\
    &=-\frac{1}{\tau}\iau{\delta \bu^{n+1}} + \langle \mNu^n, \delta \bu^{n+1} \rangle_h,
    \ed
    Evaluating the difference between $\mathcal{E}_h^{n+1}$ and $\mathcal{E}_h^n$ yields the following expression:
    \ba \label{eq_diff_E}
        \mathcal{E}_h^{n+1} - {\mathcal{E}}_h^n & = \frac{K}{2}\ia{\nabla_h  \bQ^{n+1} }- \frac{K}{2}\ia{\nabla_h \bQ^{n}} + B_0\ia{\Delta_h\bu^{n+1}} - B_0\ia{\Delta_h\bu^{n}}    \\&
        \quad+ s^{n+1} - {s}^n + \frac{\kappa }{2} \left( \|\bQ^{n+1}\|_h^2 - \|\bQ^n\|_h^2 + \|\bu^{n+1}\|_h^2 - \|\bu^n\|_h^2 \right)  \\
                                            & =  \id{\mLQ\bQ^{n+1}}{\delta \bQ^{n+1}}  - \frac{1}{2} \ilQ{\delta \bQ^{n+1}} + \langle \mLu\bu^{n+1}, \delta \bu^{n+1} \rangle_h  \\
                                            &
        \quad- \frac{1}{2} \ilu{ \delta \bu^{n+1}}  -\id{\mNQ^n}{\delta \bQ^{n+1}}-\langle \mNu^n, \delta \bu^{n+1} \rangle_h.
    \ed
    Substituting \eqref{eq34} back into (\ref{eq_diff_E}) and using Lemma~\ref{lem_norm_equivalence}, we obtain the following inequality:
    \begin{equation*}
        \begin{aligned}
            {\mathcal{E}}_h^{n+1} - \mathcal{E}_h^n & = -\frac{1}{\tau}\iaQ{\delta \bQ^{n+1}}   - \frac{1}{2} \ilQ{\delta \bQ^{n+1}}
                                                  {-\frac{1}{\tau}\iau{\delta \bu^{n+1}}
            - \frac{1}{2} \ilu{ \delta \bu^{n+1}}}                 \le 0,
                  \\
            %                                     & \leq -\frac{1}{\tau}\ibQ{\delta \bQ^{n+1}}-\frac{1}{\tau}\ibu{\delta \bu^{n+1}} \le 0,
        \end{aligned}
    \end{equation*}
    which completes the proof.
\end{proof}
 \begin{remark}
 In the decoupled case \(q=0\), a discrete maximum bound principle for the
Q-tensor can be obtained by a standard comparison argument. In the fully coupled
case, however, the terms involving second-order derivatives of \(u\) prevent the comparison argument from closing. Nevertheless, the numerical results in Section \ref{section6} show that
\(\|{\mathbf Q}\|_{\infty}\) remains uniformly bounded in all tested regimes.
 \end{remark}
\section{Convergence analysis of the GSAV-EI scheme}\label{se3}
Due to the linear appearance of the auxiliary variable $s$ in
\eqref{energy_discrete}, the discrete energy law for the modified energy
$\mathcal E_h$ does not directly provide the high-order uniform bounds. For this reason, we employ a bootstrap argument in the convergence analysis.

Throughout this section, let $(\mathbf Q_h^n,u_h^n,s^n)$ denote the numerical
solution generated by the GSAV-EI scheme \eqref{eq_scheme_Q}--\eqref{eq_scheme_s}.
Let $(\mathbf Q(t),u(t),s(t))$ denote the exact solution to the continuous system
\eqref{eq_sys_Q} with the following regularity:
         \ba \label{eqa_19}
&\mathbf{Q}(t) \in L^{\infty}(0,T; \mathbf{H}^4(\Omega, \mathcal{S}^{(d)}))\cap W^{1,\infty}(0,T; \mathbf{H}^2(\Omega, \mathcal{S}^{(d)})),\\& u(t) \in L^\infty(0,T; H^6(\Omega)) \cap W^{1,\infty}(0,T; H^4(\Omega)).
            \ed
For notational simplicity, we shall omit $\mathcal I_h$ when the
continuous fields $\mathbf{Q}(t_n)$ and $u(t_n)$ appear in discrete
operators or norms.
To facilitate the error analysis, we introduce the following notation for the error functions:
\ba
\be_{\mathbf{Q}}^{n} &= \mathbf{Q}_h^n -  \mathbf{Q}(t_n),\quad
e_{u}^{n} = u_h^n -  u(t_n),\quad
e_{s}^{n} = s^n -  s(t_n),\\
\ea &= \frac{\be_{\mathbf{Q}}^{n+1} - \be_{\mathbf{Q}}^{n}}{\tau},\quad
\eb = \frac{e_{u}^{n+1} - e_{u}^{n}}{\tau},\quad
\ec = \frac{e_{s}^{n+1} - e_{s}^{n}}{\tau},\\
\delta_\tau \mathbf{Q}(t_{n+1}) &= \frac{\mathbf{Q}(t_{n+1}) - \mathbf{Q}(t_n)}{\tau},\quad
\delta_\tau u(t_{n+1}) = \frac{u(t_{n+1}) - u(t_n)}{\tau}.
\ed
 The initial errors are defined by
\begin{equation}\label{eqcv}
\begin{aligned}
    \be_{\mathbf Q}^0
    &:=\mathbf Q_h^0-\mathbf Q(0),\qquad
    e_u^0:=u_h^0-u(0),\\
    e_s^0
    &:=s^0-s(0)
    =s^0-E_1[\mathbf Q(0),u(0)],
\end{aligned}
\end{equation}
which satisfy $\be_{\mathbf{Q}}^{0}=0,
e_u^{0}=0.$
Moreover, under the regularity assumptions in \eqref{eqa_19} and the
second-order consistency of the spatial discretization,
\begin{equation}\label{eq_initial_error}
    |e_s^0|
    =
    \left|
    E_{1h}[\mathcal I_h\mathbf{Q}(0), \mathcal I_h u(0)]
    -
    E_1[\mathbf{Q}(0),u(0)]
    \right|
    \le Ch^2.
\end{equation}
By the initial error definition \eqref{eqcv}, along with \eqref{eq_initial_error},  we have
\begin{equation}\label{eq_initial_error_bound}
    \|\be_{\mathbf{Q}}^{0}\|_{H_h^1}
    +\|e_u^{0}\|_{H_h^2}
    +|e_s^{0}|
    \le Ch^2,
\end{equation}
where $C$ is a positive constant independent of $\tau$ and $h$.
\begin{theorem}[Error estimate]\label{thm_global_error}
%  Under the regularity assumptions in \eqref{eqa_19},  when $\tau$ and $h$ are sufficiently small,
% the bootstrap assumption \eqref{eq:bootstrap_discrete} holds for all
% $0\le n\le N_T$. Moreover, the fully discrete solution
% satisfies
Under the regularity assumptions in \eqref{eqa_19} and the
initialization above, the fully discrete solution $(\mathbf{Q}_h^n, u_h^n, s^n)$ of the GSAV-EI scheme satisfies the following error estimate, provided that  $\tau$ and $h$ are sufficiently small:
\[
\|\be_{\mathbf Q}^{n}\|_{H_h^1}
+\|e_u^{n}\|_{H_h^2}
+|e_s^{n}|
\le  C_{err}(\tau+h^2),
\qquad 0\le n\le N_T:=\lfloor T/\tau\rfloor.
\]
\end{theorem}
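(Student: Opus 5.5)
The plan is a bootstrap (induction) argument built on the backward Euler-type form \eqref{eq_scheme_Q}--\eqref{eq_scheme_s}.

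\textbf{Induction hypothesis.} Fix a constant $\tilde C$ larger than the corresponding norms of the exact solution; regularity \eqref{eqa_19} gives uniform $\mathbf H^2$ bounds on $\mathbf Q$ and $H^4$ bounds on $u$. Assume that for all $m\le n$:
\begin{itemize}
\item $\|\mathbf Q_h^m\|_{H_h^2}+\|u_h^m\|_{H_h^4}\le \tilde C$;
\item $|g^m-1|\le 1/2$;
\item the error bound of Theorem~\ref{thm_global_error} holds with constant $C_{err}$.
\end{itemize}
For the base case $n=0$, we have $g^0=1$ and the initial error bound \eqref{eq_initial_error_bound}. From the hypothesis and discrete Sobolev embeddings we obtain $\|\mathbf Q_h^m\|_\infty$, $\|u_h^m\|_\infty$ and $\|\nabla_h u_h^m\|_{L^4_h}$ bounds. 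Consequently the nonlinear maps $\mathbf H_h,\mu_h,E_{1h}$ and $g$ are locally Lipschitz in the relevant norms, with constants depending only on $\tilde C$.

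\textbf{Error equations.} Write the exact solution into the scheme to obtain
\[
\mq(\tau\mLQ)\dtau \be_{\mathbf Q}^{n+1}+\mLQ \be_{\mathbf Q}^{n+1}=\mNQ^n-\mathbf N(t_{n+1})+\mathbf R_{\mathbf Q}^{n+1}+(1-\mq(\tau\mLQ))\dtau\mathbf Q(t_{n+1}),
\]
with an analogous equation for $u$, and a truncation error for $s$ of size $O(\tau+h^2)$. The key observation is that $1-\mq(z)\le z/2$ by \eqref{eq_a2}, so
\[
\|(1-\mq(\tau\mLQ))\dtau\mathbf Q\|_h\le \tfrac{\tau}{2}\|\mLQ\dtau \mathbf Q\|_h=O(\tau),
\]
using $\mathbf Q\in W^{1,\infty}(\mathbf H^2)$. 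The $u$ analogue uses $u\in W^{1,\infty}(H^4)$. This is exactly where the reformulation removes the $\tau\lesssim h^2$ restriction: the exponential correction becomes an ordinary $O(\tau)$ consistency term.

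\textbf{Energy estimates.} Test the $\mathbf Q$-error equation with $\delta\be_{\mathbf Q}^{n+1}$ (or $\tau\,\mLQ\be_{\mathbf Q}^{n+1}$) and the $u$-error equation with $\delta e_u^{n+1}$. Using Lemma~\ref{lem_norm_equivalence} and the same telescoping identity as in Theorem~\ref{them3_1}, the left side controls
\[
\tfrac12\bigl(\ilQ{\be_{\mathbf Q}^{n+1}}-\ilQ{\be_{\mathbf Q}^{n}}\bigr)+\tfrac1\tau\iaQ{\delta\be_{\mathbf Q}^{n+1}}.
\]
Since $\ilQ{\cdot}$ is equivalent to $\|\cdot\|_{H_h^1}^2$ and $\ilu{\cdot}$ to $\|\cdot\|_{H_h^2}^2$, this yields the target norms.

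The nonlinear differences $g^n\mathbf H_h^n-g(t_{n+1})\mathbf H(t_{n+1})$ split into three parts:
\begin{itemize}
\item $(g^n-g(t_n))\mathbf H_h^n$, bounded by $C(|e_s^n|+\|\be_{\mathbf Q}^n\|_{H_h^1}+\|e_u^n\|_{H_h^2})$;
\item $g(t_n)(\mathbf H_h^n-\mathbf H(t_n))$, bounded by the same quantity;
\item $O(\tau)$ time lag terms.
\end{itemize}
Pairing these terms with $\delta\be$ is delicate because $\iaQ{\cdot}$ degenerates for high frequencies. I would therefore pair them instead via the second testing with $\tau\mLQ\be^{n+1}$, or move $\tau\mLQ$ onto the nonlinear error, using $\mq\le1$. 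The $\mu_{\rm d}$ terms contain second derivatives and $\operatorname{div}^2_{h,c}$; summation by parts \eqref{eq_discrete_hessian_adjoint}, together with \eqref{eq_discrete_hessian_bound}, moves the derivatives onto the test function, which is controlled by $\|e_u\|_{H_h^2}$.

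For $e_s$, subtract the continuous $s$-equation from \eqref{eq_scheme_s}. The differences $\langle \mNQ^n,\delta\bQ^{n+1}\rangle-\tau\langle g\mathbf H,\partial_t\mathbf Q\rangle$ are bounded by $C\tau(\text{errors}+\tau+h^2)$, where $\delta\be$ is handled through $\tau\|\dtau\be\|$, which is controlled by the error equation. Summing in $n$ and applying discrete Gronwall gives the estimate with a constant $C_{err}$ independent of $\tilde C$-sized choices, provided $\tau,h$ are small.

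\textbf{Closing the bootstrap (expected main obstacle).} We must recover $\|u_h^{n+1}\|_{H_h^4}\le\tilde C$ and $|g^{n+1}-1|\le1/2$. For the first, apply an inverse inequality to the error, $\|e_u^{n+1}\|_{H_h^4}\le Ch^{-2}\|e_u^{n+1}\|_{H_h^2}\le C(\tau h^{-2}+1)$, which fails when $\tau\gg h^2$. Instead I would derive a higher-order error estimate by testing the $u$-error equation with $\mLu \delta e_u^{n+1}$, or with $\mLu^2 e_u^{n+1}$, which yields $\|e_u^{n+1}\|_{H_h^4}\le C(\tau+h^2)^{1/2}$ or similar from the $W^{1,\infty}(H^4)$ and $H^6$ regularity. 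Then $\|u_h^{n+1}\|_{H_h^4}\le\|u\|+$small$\le\tilde C$ for small $\tau,h$, and the same argument applies for $\mathbf Q$ in $H_h^2$. The bound on $g^{n+1}$ follows from the bounds on $e_s^{n+1}$ and on $E_{1h}^{n+1}-E_1(t_{n+1})$. This higher-order regularity step is where I expect most difficulty.
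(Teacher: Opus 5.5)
\textbf{The gap.} The gap is the step you flag yourself: the induction hypothesis $\|\mathbf Q_h^m\|_{H_h^2}+\|u_h^m\|_{H_h^4}\le\tilde C$ is never closed. From the $H_h^1\times H_h^2$ error bound, an inverse inequality only gives $\|\be_{\mathbf Q}^{n+1}\|_{H_h^2}\le Ch^{-1}(\tau+h^2)$ and $\|e_u^{n+1}\|_{H_h^4}\le Ch^{-2}(\tau+h^2)$. This forces $\tau\lesssim h$ and $\tau\lesssim h^2$ respectively, which is exactly the mesh coupling the theorem is meant to avoid.

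The higher-order error estimate you propose (testing with $\mLu\delta e_u^{n+1}$ or $\mLu^2e_u^{n+1}$) is not substantiated. It needs $\mNu^n-\mathcal N(t_n)$ and the truncation errors in $H_h^2$. Since $\mu_{{\rm d},h}$ contains $\operatorname{div}_{h,c}^{\,2}(\mathbf M_h^nu_h^n)$, this brings in $\be_{\mathbf Q}$ in $H_h^4$ and exact-solution regularity well beyond \eqref{eqa_19}.

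You also cannot simply drop the hypothesis. The $u$-part is in fact unnecessary, but the $\mathbf Q$-part is not: a uniform bound on $\|\mathbf Q_h^n\|_{H_h^2}$ is what makes $\|\mNu^n\|_h$ uniformly bounded, through $\mathbf M_h^n:D_h^2u_h^n$ and $\operatorname{div}_{h,c}^{\,2}(\mathbf M_h^nu_h^n)$. That bound is needed to absorb $\langle\mNu^n,\delta_\tau e_u^{n+1}\rangle_h$ in the $e_s$ equation.

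\textbf{The missing idea.} Obtain the higher regularity of the numerical solution \emph{a priori}, independently of the error analysis. The paper bootstraps only the scalar condition $s^n\ge -C_s$ in \eqref{eq:bootstrap_discrete}. Combined with Theorem~\ref{them3_1}, this gives $-C_s\le s^n\le\mathcal E_h^0$. Hence one gets the uniform $H_h^1\times H_h^2$ bounds \eqref{eqc7}, which already suffice for your $L^\infty$ embeddings, bounds on $E_{1h}^n$, and $g_*\le g^n\le g^*$ (Lemma~\ref{gn_bound0}); the last replaces your $|g^m-1|\le1/2$.

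One then returns to the exponential form \eqref{eq4_6}--\eqref{eq4_7}, writes the discrete Duhamel formula \eqref{eq_discrete_duhamel}, and applies the semigroup smoothing of Lemma~\ref{lem_analytic_semigroup} in a chain:
\begin{itemize}
\item $\|\mNQ^n\|_h\le C$ gives $\mathbf Q_h^n$ in $H_h^{2-\epsilon}$;
\item this bounds $\mathcal A^{-\epsilon/2}\mNu^n$ and gives $u_h^n$ in $H_h^{4-2\epsilon}$;
\item this in turn gives $\|\Delta_h\mathbf Q_h^n\|_h\le\mathcal M_Q$ and $\|\mNQ^n\|_h+\|\mNu^n\|_h\le\mathcal M_1$ (Theorem~\ref{th4_1}).
\end{itemize}
These bounds hold uniformly in $\tau$ and $h$, with no relation between them. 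So the ETD form supplies regularity, the backward-Euler form supplies the error energy estimates, and the bootstrap closes from $|e_s^n|\le C_{err}(\tau+h^2)$ alone.

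\textbf{Two smaller points.} First, in your telescoping you dropped $\frac12\ilQ{\delta\be_{\mathbf Q}^{n+1}}$. Combined with $\frac1\tau\iaQ{\delta\be_{\mathbf Q}^{n+1}}$, it gives $\frac1\tau\ibQ{\delta\be_{\mathbf Q}^{n+1}}\ge\frac1\tau\|\delta\be_{\mathbf Q}^{n+1}\|_h^2$ by \eqref{eq_inner_Q}, and similarly for $u$. So there is no high-frequency degeneracy, and this $L^2$ control of the difference quotients is precisely what absorbs the coupling terms in the $e_s$ equation.

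Second, the $\operatorname{div}_{h,c}^{\,2}$ part of $\mNu^n-\mathcal N(t_n)$ cannot be handled by moving derivatives onto $\delta e_u^{n+1}$. That produces $\|D_h^2\delta_\tau e_u^{n+1}\|_h$, which the left side controls only with a factor $\tau$. It needs $\|\be_{\mathbf Q}^n\|_{H_h^2}$ instead, which the paper controls in $\ell^2$ in time by testing with $\mLQ\be_{\mathbf Q}^n$; see \eqref{eq_H2_full_bound_equiv}. Your alternative test with $\tau\mLQ\be_{\mathbf Q}^{n+1}$ points in this direction.

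Your consistency argument via $0\le1-\mq(z)\le z/2$ is a legitimate alternative to the paper's kernel $\mathcal K_{\Lambda_h}$ computation.
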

Let $M_s=\|s(t)\|_{L^\infty(0,T)}$ and choose $C_s=M_s+1$.
We first make the following temporary bootstrap
assumption:
\begin{equation}
\label{eq:bootstrap_discrete}
s^n\ge -C_s,\qquad 0\le n\le m,\qquad \text{for some}\quad 0\le m\le N_T.
\end{equation}
This is valid at $m=0$, since $s^0=E_1[\mathbf{Q}(0),u(0)]+e_s^0
\ge -M_s-Ch^2>-M_s-1=-C_s$ for sufficiently small $h$.
Once the
desired error estimate is established under
\eqref{eq:bootstrap_discrete}, the estimate extends to all
$0\le n\le N_T$ by a standard induction argument.
% \subsection{Preliminary estimates}
\subsection{Uniform boundedness of the scalar factor \texorpdfstring{$g^n$}{g^n}}
\begin{lemma}\label{gn_bound0}
    For the scalar factor $g^n$ defined in \eqref{eq_s_ex_n}, under the bootstrap assumption \eqref{eq:bootstrap_discrete},
    there exist  positive constants $g_*$ and $g^*$ independent of $\tau$ and $h$ such that  the following uniform bound holds:
    \begin{equation}
        0 <g_*\le g^n \le g^*< \infty, \quad 0\le n\le m.
    \end{equation}
\end{lemma}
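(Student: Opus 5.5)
The proof rests on the definition $g^n=\zeta(s^n)/\zeta(E_{1h}^n)$ together with the monotonicity and positivity of $\zeta$. If both $s^n$ and $E_{1h}^n$ lie in a fixed compact interval $[-A,A]$, with $A$ independent of $\tau$, $h$ and $n\le m$, then $\zeta(-A)\le\zeta(\cdot)\le\zeta(A)$ for both arguments. This gives
\[
g_*:=\zeta(-A)/\zeta(A)\le g^n\le \zeta(A)/\zeta(-A)=:g^*,
\]
where $g_*>0$ and $g^*<\infty$ because $\zeta$ takes values in $(0,\infty)$. So everything reduces to two-sided uniform bounds on $s^n$ and on $E_{1h}^n$.

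\emph{Step 1: bounds on $s^n$ and the discrete $H^1\times H^2$ norm.} By Theorem~\ref{them3_1}, $\mathcal E_h^n\le\mathcal E_h^0$ for all $n$. The quantity $\mathcal E_h^0$ is bounded by a constant $C_0$ independent of $h$. This follows because $\mathbf Q_h^0=\mathcal I_h\mathbf Q(0)$, $u_h^0=\mathcal I_hu(0)$ and $s^0=E_{1h}^0$ are consistent approximations of smooth data, via \eqref{eq_initial_error}.

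\emph{Upper bound on $s^n$.} The quadratic part of $\mathcal E_h^n$ is nonnegative, so $s^n\le \mathcal E_h^n\le C_0$.

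\emph{Lower bound on $s^n$.} This is supplied by the bootstrap assumption \eqref{eq:bootstrap_discrete}: $s^n\ge -C_s$.

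\emph{Norm bound.} Substituting this lower bound back into $\mathcal E_h^n\le C_0$ gives
\[
\frac K2\|\nabla_h\mathbf Q_h^n\|_h^2+B_0\|\Delta_hu_h^n\|_h^2+\frac\kappa2\big(\|\mathbf Q_h^n\|_h^2+\|u_h^n\|_h^2\big)\le C_0+C_s .
\]
Hence $\|\mathbf Q_h^n\|_{H_h^1}+\|u_h^n\|_{H_h^2}\le C$ uniformly for $0\le n\le m$.

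\emph{Step 2: two-sided bound on $E_{1h}^n$.} I will use the discrete Sobolev embeddings for periodic grid functions in $d\le3$, with $h$-independent constants:
\[
\|\Phi\|_{L_h^6}\le C\|\Phi\|_{H_h^1},\qquad \|v\|_\infty\le C\|v\|_{H_h^2}.
\]
The polynomial terms $\langle f_{\rm bn}(\mathbf Q_h^n),1\rangle_h$ and $\langle f_{\rm bs}(u_h^n),1\rangle_h$ contain at most quartic powers. They are therefore bounded above by $C(1+\|\mathbf Q_h^n\|_{L_h^4}^4+\|u_h^n\|_{L_h^4}^4)\le C$.

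For the lower bound of these terms I will use the pointwise inequalities from the proof of Lemma~\ref{lem:continuous_regularity}:
\[
f_{\rm bn}-\tfrac\kappa2|\mathbf Q|_F^2\ge -C_\kappa,\qquad f_{\rm bs}-\tfrac\kappa2u^2\ge -C_\kappa.
\]
Summing over the grid gives a lower bound $-2C_\kappa|\Omega|$.

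For the coupling terms, I will use Cauchy--Schwarz and \eqref{eq_discrete_hessian_bound}:
\[
|\langle D_h^2u_h^n,\mathbf M_h^nu_h^n\rangle_h|\le\|\Delta_hu_h^n\|_h\,\|u_h^n\|_\infty\,\|\mathbf M_h^n\|_h\le C,
\]
where $\|\mathbf M_h^n\|_h\le C(\|\mathbf Q_h^n\|_h+1)$. Similarly, $\|\mathbf M_h^nu_h^n\|_h^2\le\|u_h^n\|_\infty^2\|\mathbf M_h^n\|_h^2\le C$. Together these give $|E_{1h}^n|\le C$.

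Taking $A$ to be the maximum of $C_0$, $C_s$ and this last constant then yields the lemma by the argument in the first paragraph. The main point needing care is the use of the discrete Sobolev inequalities $H_h^2\hookrightarrow L^\infty$ and $H_h^1\hookrightarrow L^6$ on the periodic grid with constants independent of $h$. For these I would cite the standard discrete Fourier or summation-by-parts proofs. All other steps are direct consequences of Theorem~\ref{them3_1} and the bootstrap hypothesis.
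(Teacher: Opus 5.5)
Your proposal is correct and follows essentially the same route as the paper. Theorem~\ref{them3_1} together with the bootstrap hypothesis gives $-C_s\le s^n\le\mathcal E_h^0$ and uniform discrete $H_h^1\times H_h^2$ bounds; Cauchy--Schwarz, \eqref{eq_discrete_hessian_bound} and the discrete Sobolev embeddings then give a two-sided bound on $E_{1h}^n$; and positivity plus monotonicity of $\zeta$ bound $g^n$. Your explicit checks that $\mathcal E_h^0$ is $h$-independent and that the embedding constants do not depend on $h$ only spell out what the paper's proof uses tacitly.
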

\begin{proof}
   By Theorem \ref{them3_1} and the bootstrap assumption
\eqref{eq:bootstrap_discrete}, we obtain the following uniform bound
on $s^n$:
    \ba\label{eq_a5}
   -C_s \le  s^n \le \mathcal{E}_h^n \le \mathcal{E}_h^0, \quad 0\le n\le m.
    \ed
    % Recall from \eqref{eq_s_ex_n} that $E_{1h}^n$ is expressed as follows:
    % \begin{equation*}
    %     E_{1h}^n = 2 B_0 q^2 \langle D_h^2 u_h^n , \bM_h^n u_h^n \rangle_h + B_0 q^4 \| \bM_h^n u_h^n \|_h^2 + \langle f_{\mathrm{bn}}(\mathbf{Q}_h^n), 1 \rangle_h + \langle f_{\mathrm{bs}}(u_h^n), 1 \rangle_h.
    % \end{equation*}
By the definition of $\mathcal{E}_h^n$ in \eqref{energy_discrete} and \eqref{eq_a5}, we have
    \begin{equation*}
      \frac{K}{2} \|\nabla_h \mathbf{Q}_h\|_h^2  + B_0 \left\| \Delta_h u_h  \right\|_h^2+\frac \kappa2 (\|\mathbf{Q}_h\|_h^2+\|u_h\|_h^2) \le \mathcal{E}_h^0+C_s, \quad 0\le n\le m.
    \end{equation*}
 Then the following discrete $H_h^1$ and $H_h^2$ uniform bounds hold:
    \begin{equation}\label{eqc7}
        \|\mathbf{Q}_h^n\|_{H_h^1}^2 + \|u_h^n\|_{H_h^2}^2 \le C_1(\mathcal{E}_h^0,C_s), \quad 0\le n\le m,
    \end{equation}
    where $C_1$ is a positive constant independent of $\tau$ and $h$.
By applying the Cauchy-Schwarz inequality and invoking \eqref{eqc7}, we obtain the following bounds for the  terms of $E_{1h}^n$ defined in \eqref{eq_s_ex_n}:
    \begin{align*}
        \left| \langle D_h^2 u_h^n , \bM_h^n u_h^n \rangle_h \right| & \le \|D_h^2 u_h^n\|_h \|\bM_h^n\|_{h} \|u_h^n\|_{\infty} \le C, ~~
        \| \bM_h^n u_h^n \|_h^2                                       \le \|\bM_h^n\|_{h}^2 \|u_h^n\|_{\infty}^2 \le C,\\
        \left| \langle f_{\mathrm{bn}}(\mathbf{Q}_h^n), 1 \rangle_h \right| & \le C (1+\|\mathbf{Q}_h^n\|_{L_h^4}^4  ) \le C, \quad
        \left| \langle f_{\mathrm{bs}}(u_h^n), 1 \rangle_h \right| \le C (1+\|u_h^n\|_{\infty}^4) \le C.
    \end{align*}
Then, there exist positive constants $\widetilde{C}_*$ and $\widetilde{C}^*$ independent of $\tau$ and $h$ such that
    \begin{equation}\label{eq_E1_bounds}
        -\widetilde{C}_* \le E_{1h}^n \le \widetilde{C}^*, \quad 0\le n\le m.
    \end{equation}
    Combining  \eqref{eq_a5} and \eqref{eq_E1_bounds}, for $g^n = \zeta(s^n)/\zeta(E_{1h}^{n})$, we have
    \begin{equation*}
        0< g_*:= {\frac{\zeta(-C_s)}{\zeta(\widetilde{C}^*)}}\le g^n  \le {\frac{\zeta(\mathcal{E}_h^0)}{\zeta(-\widetilde{C}_*)}} =: g^* , \quad 0\le n\le m,
    \end{equation*} which completes the proof.
\end{proof}
\subsection{Higher regularity estimates for the numerical solution}
Next, we establish a direct regularity bootstrapping estimate for the GSAV-EI  scheme. The key idea is to exploit the smoothing properties of the linear semigroup generated by $\mLQ$ and $\mLu$ to obtain higher-order regularity estimates for  $\bQ^n$ and $\bu^n$.
\begin{lemma} \label{lem_analytic_semigroup}
For any self-adjoint, positive semidefinite discrete operator $A$, the
analytic semigroup generated by $-A$ satisfies the following smoothing
estimate \cite{pazy2012semigroups}:
    \begin{equation} \label{eq_smoothing_estimate}
        \vertiii{A^r e^{-tA}}_h\le \sup_{\lambda \ge 0} \lambda^r e^{-t\lambda} = C_r t^{-r}, \quad \forall t > 0, \; r \geq 0,
    \end{equation}
  where $\vertiii{\cdot}_h$ denotes the operator norm induced by the
 norm $\|\cdot\|_h$, and $C_r = (r/e)^r$.
\end{lemma}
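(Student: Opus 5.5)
The plan is to use the spectral theorem for the self-adjoint operator $A$ on the finite-dimensional inner-product space $(E_h^{\rm per})$, or $\mathbb{M}_h^{(d)}$, equipped with $\langle\cdot,\cdot\rangle_h$. This reduces the operator-norm estimate to a scalar maximization problem.

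First, since $A$ is self-adjoint and positive semidefinite with respect to $\langle\cdot,\cdot\rangle_h$, it has an $\langle\cdot,\cdot\rangle_h$-orthonormal eigenbasis $\{\psi_k\}$ with eigenvalues $\lambda_k\ge 0$. For the periodic difference operators $\mathcal L_h$ and $\mathcal D_h$ this basis is the discrete Fourier basis. By functional calculus, $A^r e^{-tA}$ acts diagonally with eigenvalues $\lambda_k^r e^{-t\lambda_k}$; here $A^r$ is defined spectrally for non-integer $r$, with the convention $0^0=1$. Expanding $v=\sum_k c_k\psi_k$ and using Parseval's identity gives
\[
\|A^r e^{-tA}v\|_h^2=\sum_k \lambda_k^{2r}e^{-2t\lambda_k}|c_k|^2\le \Big(\max_k \lambda_k^r e^{-t\lambda_k}\Big)^2\|v\|_h^2 .
\]
Hence
\[
\vertiii{A^r e^{-tA}}_h=\max_k\lambda_k^r e^{-t\lambda_k}\le \sup_{\lambda\ge0}\lambda^r e^{-t\lambda}.
\]

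Second, compute the scalar supremum. Let $f(\lambda)=\lambda^r e^{-t\lambda}$. For $r=0$ the supremum equals $1=C_0$, using $(0/e)^0=1$. For $r>0$, $f(0)=0$ and $f(\lambda)\to0$ as $\lambda\to\infty$, so the supremum is attained at an interior critical point. Setting $f'(\lambda)=\lambda^{r-1}e^{-t\lambda}(r-t\lambda)=0$ gives $\lambda=r/t$, and therefore
\[
\sup_{\lambda\ge0} f(\lambda)=(r/t)^r e^{-r}=(r/e)^r t^{-r}.
\]

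There is no real obstacle in this argument. The only point needing care is to state self-adjointness with respect to the discrete inner product $\langle\cdot,\cdot\rangle_h$; for $\mathcal L_h$ and $\mathcal D_h$ this follows from the summation-by-parts identities \eqref{eq_c1}. This guarantees the orthonormal eigenbasis, and with it the fact that the operator norm equals the spectral radius of the normal operator $A^re^{-tA}$.
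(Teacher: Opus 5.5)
Your argument is correct and is essentially the paper's: the paper gives no separate proof beyond citing Pazy. The intermediate bound $\le \sup_{\lambda\ge 0}\lambda^r e^{-t\lambda}$ in the statement is exactly your reduction via an $\langle\cdot,\cdot\rangle_h$-orthonormal eigenbasis, followed by the elementary maximization at $\lambda=r/t$ giving $C_r=(r/e)^r$. You have simply supplied the details the paper leaves implicit (self-adjointness via summation by parts, the $r=0$ convention), and it is this self-adjoint spectral argument that yields the explicit constant, rather than the unspecified constant of Pazy's abstract analytic-semigroup estimate.
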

\begin{theorem} \label{th4_1}
    % Assume the initial data $\mathbf{Q}^0 \in \mathbf{H}^2(\Omega, \mathcal{S}^{(d)})$ and $u^0 \in H^4(\Omega)$.
    % Let ${(\bQ^n, \bu^n, s^n)}$ be the numerical solution generated by the GSAV-EI scheme \eqref{eq_scheme_Q}-\eqref{eq_scheme_s}.
    Under the regularity assumptions in \eqref{eqa_19} and the bootstrap assumption \eqref{eq:bootstrap_discrete},
     there exist  positive constants $\mathcal{M}_Q$, $\mathcal{M}_u$, and $\mathcal{M}_1$, independent of $\tau$ and $h$, such that the following uniform bound holds:
    \ba
         \|\Delta_h\bQ^n\|_{h} &\le \mathcal{M}_Q,~   \norm{(-\Delta_h)^{2-\epsilon} \bu^n}_{h} \le \mathcal{M}_u, \quad 0\le n\le m,\\
           \norm{\mNu^n}_h&+\norm{\mNQ^n}_h
          \le \mathcal{M}_1, \quad 0\le n\le m,
    \ed
     where $\epsilon \in (0, 1/2)$ is a fixed sufficiently small parameter.
\end{theorem}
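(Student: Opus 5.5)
The plan is to work with the original exponential form \eqref{eq4_6}--\eqref{eq4_7}. Iterating it gives a discrete Duhamel formula
\[
\bQ^{n}=e^{-n\tau\mLQ}\bQ^0+\tau\sum_{k=0}^{n-1}e^{-(n-1-k)\tau\mLQ}\varphi(\tau\mLQ)\mNQ^k,
\]
and the analogous formula for $\bu^n$ with $\mLu,\mNu^k$. The smoothing estimate of Lemma~\ref{lem_analytic_semigroup} is then applied term by term. The first step is a bound for the nonlinearities in weak norms, using only the $H^1_h/H^2_h$ bounds \eqref{eqc7} and $g^n\le g^*$ from Lemma~\ref{gn_bound0}.

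For $\mNQ^n=-g^n\muQ^n$, the polynomial part $\mathbf H_{\rm p}$ is bounded in $\|\cdot\|_h$ by discrete Sobolev inequalities. In $d\le3$ we have $\|\bQ\|_{L^6_h}\lesssim\|\bQ\|_{H^1_h}$ and $\|u_h\|_\infty\lesssim\|u_h\|_{H_h^2}$. The derivative part $u_hD_h^2u_h$ is bounded by $\|u_h\|_\infty\|\Delta_hu_h\|_h$ through \eqref{eq_discrete_hessian_bound}. Hence $\|\mNQ^n\|_h\le C$ immediately.

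For $\mNu^n$, the term $\operatorname{div}^2_{h,c}(\bM_h u_h)$ contains two derivatives of $\bQ_h$, which is not controlled by \eqref{eqc7}. So I would first bound $\|\mNu^n\|_{H_h^{-1}}$, or more precisely $\|(-\Delta_h+I)^{-1}\mNu^n\|_h$, by $C$, using duality and summation by parts \eqref{eq_discrete_hessian_adjoint}.

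Next I insert these bounds into the Duhamel formulas. For $\bQ$, apply $\mLQ$: the initial term is bounded by $\|\mLQ\bQ^0\|_h\le C\|\mathbf Q(0)\|_{H^2}$. For the sum I use $\|\mLQ e^{-t\mLQ}\|\le C t^{-1}$, which diverges logarithmically. So I take an intermediate step: first bound $\|\mLQ^{1-\epsilon}\bQ^n\|_h$, whose kernel $t^{-(1-\epsilon)}$ is summable, with $\tau\sum_k((n-k)\tau)^{-1+\epsilon}\le CT^\epsilon$. The term $\varphi(\tau\mLQ)$ is harmless since $0\le\varphi\le1$ and $\varphi$ commutes with the semigroup. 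The $k=n-1$ term is handled by $\tau\|\mLQ^{r}\varphi(\tau\mLQ)\|\le C\tau^{1-r}$.

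For $u$ with the $H^{-1}$ bound on $\mNu$, the fourth-order operator $\mLu\sim\Delta_h^2$ gives $\|(-\Delta_h)^{2-\epsilon'}e^{-t\mLu}(-\Delta_h+I)\|\lesssim t^{-(3-\epsilon')/4}$, which is integrable. In fact this already gives $(-\Delta_h)^{2-\epsilon}$ directly for $u$ once $\mNu\in H^{-1}_h$; the $\epsilon$ loss in the statement reflects exactly this non-integrable endpoint for the full $\Delta_h^2$.

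Then I bootstrap. The improved $\bQ_h$ regularity ($H^{2-2\epsilon}_h$, hence $\bQ_h\in L^\infty$ and $\nabla_h\bQ_h\in L^{p}$ for large $p$) together with $u_h\in H^{4-2\epsilon}_h$ yields $\|\mNu^n\|_h\le C$. Here $\operatorname{div}^2(\bM u)$ involves $u D^2\bQ$, $\nabla\bQ\nabla u$ and $\bM D^2u$. The first of these needs $D_h^2\bQ_h$ in $L^2$, or at least $L^{2}$ with $u\in L^\infty$, so I would handle it by first improving $\mNQ$. Since $u_h\in H_h^{4-2\epsilon}$ gives $D_h^2u_h\in L^\infty$, $\mNQ^n$ becomes bounded in $H^1_h$. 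The Duhamel formula with $\mLQ^{3/2-\epsilon}e^{-t\mLQ}$ applied to $\mLQ^{1/2}\mNQ$ then gives $\|\Delta_h\bQ^n\|_h\le\mathcal M_Q$, which is the claimed bound. With $\Delta_h\bQ_h\in L^2$, $\|\mNu^n\|_h\le\mathcal M_1$ follows, closing the estimate. All constants depend only on $T$, $g^*$, $C_1$ and the initial data, and not on $\tau$, $h$ or $m$.

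The main obstacle is the ordering of this bootstrap for the derivative couplings. The term $\operatorname{div}^2_{h,c}(\bM_hu_h)$ needs $\Delta_h\bQ_h$, while $\mathbf H_{{\rm d},h}$ needs high regularity of $u_h$. Each step must use a fractional power strictly below the endpoint so that the discrete convolution sums converge uniformly in $\tau$. Discrete Sobolev embeddings, such as $\|D_h^2u\|_\infty\lesssim\|u\|_{H^{4-2\epsilon}_h}$ in 3D, must also be checked carefully.
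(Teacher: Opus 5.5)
Your overall strategy matches the paper's: a discrete Duhamel formula, fractional smoothing kept strictly below the endpoint, and alternating improvements of $\mathbf Q_h$ and $u_h$. Your first steps are also fine, namely the $L^2$ bound on $\mNQ^n$ from \eqref{eqc7} and $g^*$, followed by $\|\mLQ^{1-\epsilon}\bQ^n\|_h\le C$. The gap is in the $u$-step. Since $-\Delta_h\lesssim\mLu^{1/2}$, Lemma~\ref{lem_analytic_semigroup} gives
\[
\bigl\|(-\Delta_h)^{2-\epsilon'}(-\Delta_h+I)\,e^{-t\mLu}\bigr\|\lesssim\bigl\|\mLu^{(3-\epsilon')/2}e^{-t\mLu}\bigr\|\lesssim t^{-(3-\epsilon')/2},
\]
not $t^{-(3-\epsilon')/4}$. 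This exponent exceeds $1$. Because the spectrum of $-\Delta_h$ reaches $O(h^{-2})$, the bound is essentially sharp, so the Duhamel integral is not bounded uniformly in $h$.

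In general, fourth-order smoothing turns an $H_h^{-\sigma}$ bound on the forcing into only an $H_h^{4-\sigma-\delta}$ bound on $u_h$. The bound you actually construct, $\|(-\Delta_h+I)^{-1}\mNu^n\|_h$ via \eqref{eq_discrete_hessian_adjoint}, is an $H_h^{-2}$ bound, which gives no gain over \eqref{eqc7}. Even a genuine $H_h^{-1}$ bound yields only $u_h\in H_h^{3-\delta}$, since the kernel for $(-\Delta_h)^{2-\epsilon'}$ is then $t^{-(5-2\epsilon')/4}$. Consequently $u_h\in H_h^{4-2\epsilon}$, $D_h^2u_h\in L^\infty$ and $\mNQ^n\in H_h^1$ are not available at the point where you use them. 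The same miscount recurs later: $\|\mLQ^{3/2-\epsilon}e^{-t\mLQ}\|\sim t^{-(3/2-\epsilon)}$ is not integrable. Acting on $\mLQ^{1/2}\mNQ$, you should use $\mLQ^{1/2}e^{-t\mLQ}$, whose kernel is $t^{-1/2}$.

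The missing idea is to feed the improved $\mathbf Q_h$-regularity into $\mNu$ \emph{before} treating the $u$-equation, so that $\mNu$ loses only $\epsilon$ derivatives rather than one or two. Let $\ma=-K\Delta_h+\frac{\kappa}{2}I$. Your bound $\|\ma^{1-\ep/2}\bQ^n\|_h\le C$, together with $u_h\in H_h^2$, gives $\bM_h\bu\in H_h^{2-\epsilon}$. Hence $\|\ma^{-\ep/2}\operatorname{div}_{h,c}^{\,2}(\bM_h\bu)\|_h\le C\|\ma^{1-\ep/2}(\bM_h\bu)\|_h\le C$, so $\mNu^n$ is bounded in $H_h^{-\epsilon}$. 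With $\mathcal B=2B_0\Delta_h^2$, the operator $\ma^{-\ep/2}\mathcal B^{1-\ep/4}e^{-t\mathcal B}$ then has the integrable kernel $t^{-(1-\ep/4)}$, which yields $\|(-\Delta_h)^{2-\epsilon}\bu^n\|_h\le\mathcal M_u$. After that, $\mNQ^n$ is bounded in $H_h^{\epsilon}$. One more smoothing step, with kernel $t^{-(1-\ep/2)}$, gives $\|\Delta_h\bQ^n\|_h\le\mathcal M_Q$, and $\mathcal M_1$ follows. This is exactly the paper's ordering.

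Alternatively, you can prove a genuine $H_h^{-1}$ bound, using one summation by parts and a discrete product rule for $\nabla_h(\bM_h\bu)$. Your chain then closes after one extra round: $u_h\in H_h^{3-\delta}\Rightarrow\mNQ^n\in H_h^{1-\delta}\Rightarrow\Delta_h\bQ^n\in L^2\Rightarrow\mNu^n\in L^2\Rightarrow u_h\in H_h^{4-\delta}$.
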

\begin{proof}
Let $C_0 = \max\{\norm{ \bQ^0}_{H_h^2}, \norm{ \bu^0}_{H_h^4}\}$. By the stability of the grid restriction operator and
\eqref{eqa_19}, $C_0$ is bounded independently of $h$.
 Iteratively applying \eqref{eq4_6} from $n=0$ up to $n-1$, we
    can express $\bQ^n$ in terms of the initial data $\bQ^0$ and the nonlinear terms $\mNQ^k$ for $k=0, 1, \ldots, n-1$ as follows:
    \ba \label{eq_discrete_sum}
        \bQ^n = e^{-t_n \mLQ} \bQ^0 + \sum_{k=0}^{n-1} e^{- (t_n - t_{k+1}) \mLQ} \int_0^\tau e^{-(\tau-\varsigma)\mLQ} \mNQ^k \, d\varsigma.
    \ed
   We then define a piecewise constant extension of $\mNQ^k$ by
    \begin{equation} \no
        \widetilde{\mNQ}(\sigma) := \mNQ^k, \quad \text{for } \sigma \in [t_k, t_{k+1}), \quad k = 0, 1, \ldots, n-1.
    \end{equation}
    % and rewrite the time difference in the semigroup exponent as follows:
    % \begin{equation*}
    %     (t_n - t_{k+1}) + (\tau-s) = t_n - t_{k+1} + (t_{k+1} - t_k) - (\sigma - t_k) = t_n - \sigma.
    % \end{equation*}
By setting $\sigma=t_k+\varsigma$ in each integral, we can rewrite \eqref{eq_discrete_sum} as
    \begin{equation} \label{eq_discrete_duhamel}
        \bQ^n = {e^{-t_n \mLQ} \bQ^0} + {\int_0^{t_n} e^{- (t_n - \sigma)\mLQ} \widetilde{\mNQ}(\sigma) \, d\sigma}.
    \end{equation}
    % where $\widetilde{\mNQ}(\sigma) := \mNQ^k,  \text{for } \sigma \in [t_k, t_{k+1}),  k = 0, 1, \ldots, n-1.$
%    Replacing the time index $n$ with $k$ in \eqref{eq4_6}, we have
% \begin{equation}
% \label{eq_one_step_etd_Q}
% \bQ^{k+1}
% =
% e^{-\tau\mLQ}\bQ^k
% +
% \int_0^\tau e^{-(\tau-s)\mLQ}\mNQ^k\,ds .
% \end{equation}
% We introduce the discrete evolution operator
% \[
% \mathcal U_Q^{n,k}
% :=
% e^{-(t_n-t_k)\mLQ},
% \quad
% % \]
% % so that
% % \[
% \mathcal U_Q^{n,k}
% =
% \mathcal U_Q^{n,k+1}e^{-\tau\mLQ},\qquad
% 0\le k\le n.
% \]
% Multiplying \eqref{eq_one_step_etd_Q} by $\mathcal U_Q^{n,k+1}$ and summing over
% $k=0,\ldots,n-1$, we obtain
% \begin{equation}
% \label{eq_discrete_sum}
% \bQ^n
% =
% \mathcal U_Q^{n,0}\bQ^0
% +
% \sum_{k=0}^{n-1}
% \mathcal U_Q^{n,k+1}
% \int_0^\tau e^{-(\tau-s)\mLQ}\mNQ^k\,ds .
% \end{equation}
% Since $\mLQ$ is independent of $k$, by setting $\sigma=t_k+s$ in each integral, we have
% \[
% \mathcal U_Q^{n,k+1}e^{-(\tau-s)\mLQ}
% =
% e^{-(t_n-t_{k+1})\mLQ}e^{-(t_{k+1}-\sigma)\mLQ}
% =
% e^{-(t_n-\sigma)\mLQ}.
% \]
% Then we introduce a piecewise constant extension of $\mNQ^k$ over the time interval $[0,t_n]$:
% \[
% \widetilde{\mNQ}(\sigma)=\mNQ^k,
% \qquad
% \sigma\in[t_k,t_{k+1}),\quad k=0,\ldots,n-1,
% \]
% and we can rewrite \eqref{eq_discrete_sum} as
% \begin{equation}
% \label{eq_discrete_duhamel}
% \bQ^n
% =
% e^{-t_n\mLQ}\bQ^0
% +
% \int_0^{t_n}
% e^{-(t_n-\sigma)\mLQ}\widetilde{\mNQ}(\sigma)\,d\sigma .
% \end{equation}

    By  Lemma~\ref{gn_bound0} and \eqref{eq:bootstrap_discrete}, there exists a positive constant $C_{\mathbf{N}}=C_{\mathbf{N}}(g^*,C_1)$, independent of $\tau$ and $h$, such that
    \begin{align}
        \norm{\mNQ^n}_h =\norm{-g^n \muQ^n}_h \leq C_{\mathbf{N}}. \label{align7_28}
    \end{align}
    Define  $\mathcal{A} = -K\Delta_h+\frac{1 }{2}\kappa I $ as a self-adjoint and positive definite operator.
   Applying $\mathcal{A}^{1-\frac{\epsilon}{2}}$ to both sides of
\eqref{eq_discrete_duhamel} and then taking the norm $\|\cdot\|_h$, we can deduce from Lemma~\ref{lem_analytic_semigroup} and \eqref{align7_28} the following estimate:
    \begin{equation}\label{eq_triangle_combined}
        \begin{aligned}
              \norm{\mathcal{A}^{1 - \frac{\epsilon}{2}} \bQ^n}_h
              & \le \norm{  \mathcal{A}^{1 - \frac{\epsilon}{2}} \bQ^0}_h + \int_0^{t_n} \norm{\mathcal{A}^{1-\frac{\epsilon}{2}} e^{- (t_n - \sigma)(-K\Delta_h+ \kappa I)}  \widetilde{\mNQ}(\sigma)}_h \, d\sigma                             \\
             & \le \norm{ \mathcal{A}^{1 - \frac{\epsilon}{2}} \bQ^0}_h + \int_0^{t_n} \norm{\mathcal{A}^{1-\frac{\epsilon}{2}} e^{- (t_n - \sigma)\mathcal{A}} e^{-\frac{1 }{2}\kappa (t_n - \sigma)} \widetilde{\mNQ}(\sigma)}_h \, d\sigma                             \\
             & \le \norm{ \bQ^0}_{H_h^2} + C_\epsilon \int_0^{t_n} (t_n - \sigma)^{-(1-\frac{\epsilon}{2})} e^{-\frac{1 }{2}\kappa (t_n - \sigma)} \norm{\widetilde{\mNQ}(\sigma)}_h \, d\sigma \\
            %  & \le C_0  + C_\epsilon C_{\mathbf{N}} \int_0^{t_n} (t_n - \sigma)^{-1+\frac{\epsilon}{2}} e^{-\omega_1 (t_n - \sigma)} \, d\sigma                                                                      \\
             & \le C_0 + C_\epsilon C_{\mathbf{N}} (\frac{1 }{2}\kappa)^{-\frac{\epsilon}{2}} \int_0^{\infty} \omega^{-1+\frac{\epsilon}{2}} e^{-\omega} \, d \omega                                                    \\
             & \le C_0 + C_\epsilon C_{\mathbf{N}} \, (\frac{1 }{2}\kappa)^{-\frac{\epsilon}{2}} \Gamma\left(\frac{\epsilon}{2}\right)=: C_2(C_0, C_{\mathbf{N}}, \epsilon),
        \end{aligned}
    \end{equation}
    where $\omega = \frac{1 }{2}\kappa (t_n - \sigma)$ and  $\Gamma(\cdot)$ represents the Gamma function.

    We now estimate the smoothed  nonlinear term $\ma^{-\frac{\ep}{2}}\mNu^n$. Most of its components are
lower-order terms and can be bounded straightforwardly in the norm $\|\cdot\|_h$ by \eqref{eqc7}, except for the term
$\operatorname{div}_{h,c}^{\,2}(\bM_h\bu)$.
 By employing the discrete Fourier transform and Parseval's identity under periodic boundary conditions, the action of the double divergence operator is dominated by the discrete Laplacian \cite{shen2011spectral,wise2009energy,wang2011energy}. This directly yields the following estimate
for any
$\boldsymbol{\Phi}_h\in
\mathbf{H}_h^2(\Omega,\mathbb{M}_h^{(d)})$:
\begin{equation}\label{eq_double_div_bound}
    \left\|
    \operatorname{div}_{h,c}^{\,2}\boldsymbol{\Phi}_h
    \right\|_h^2
    \le
    C
    \left\|
    \Delta_h\boldsymbol{\Phi}_h
    \right\|_h^2.
\end{equation}
    Since the fractional operator $\ma^{-\frac{\ep}{2}}$ commutes with $\Delta_h$ and
$\operatorname{div}_{h,c}^{\,2}$, we can use \eqref{eqc7},  \eqref{eq_double_div_bound} and \eqref{eq_triangle_combined} to directly establish the following estimate:
\begin{equation}
    \begin{aligned}
      \norm{
\ma^{-\frac{\ep}{2}}
\operatorname{div}_{h,c}^{\,2}
(\bM_h\bu)
}_h
         &\le  C\norm{ -\Delta_h( \ma^{-\frac{\ep}{2}} (\bM_h \bu))}_h \\
         &\le  C\norm{\ma^{1-\frac{\ep}{2}} (\bM_h \bu)}_h
         \\
         &\le C\left(\norm{\bM_h}_{\infty}\norm{\ma^{1-\frac{\ep}{2}}\bu}_h + \norm{\ma^{1-\frac{\ep}{2}}\bM_h}_h\norm{\bu}_{\infty}\right)\\
         &\le C\norm{\ma^{1-\frac{\ep}{2}} \bM_h }_h \norm{\ma^{1-\frac{\ep}{2}} \bu}_h\le C(C_1,C_2).
    \end{aligned}
\end{equation}
  Then there exists a constant $C_\mathcal{N} > 0$, independent of $\tau$ and $h$, such that
%    the following uniform bound holds for the smoothed nonlinear term $\ma^{-\frac{\ep}{2}}\mNu^n$:
    \begin{equation} \label{eq_Nu_bound}
        \norm{\ma^{-\frac{\ep}{2}} \mNu^n}_{h}  \le C_\mathcal{N}\left(g^*,C_1,C_2 \right).
    \end{equation}

    Next, we apply the same procedure to the equation for $\bu^n$. By iterating \eqref{eq4_7} for $\bu^n$ from $n=0$ up to $m$, we can express $\bu^n$ in terms of the initial data  and the nonlinear terms  as follows:
%     \begin{equation}
% \label{eq_v}
% u_h^n
% \end{equation}
    \ba
        \bu^n = {e^{- t_n \mLu} \bu^0} + {\int_0^{t_n} e^{- (t_n - \sigma)\mLu} \widetilde{\mNu}(\sigma) \, d\sigma},\quad 0\le n\le m, \label{eq_v}
    \ed
    where
     $\widetilde{\mNu}(\sigma) = \mNu^k$ for $\sigma \in [t_k, t_{k+1}), k = 0, 1, \ldots, n-1$.
    Define $\mathcal{B} =2B_0 \Delta^2_h$ as a self-adjoint and positive semi-definite operator.
    Applying  $\ma^{-\frac{\ep}{2}}\mathcal{B}^{1-\frac{\epsilon}{4}}$ to both sides of \eqref{eq_v} and  then taking the norm $\|\cdot\|_h$, we can deduce from Lemma~\ref{lem_analytic_semigroup} and \eqref{eq_Nu_bound} the following estimate:
    \begin{align}
            \norm{\ma^{-\frac{\ep}{2}}\mathcal{B}^{1-\frac{\epsilon}{4}} \bu^n}_{h}
             & \!\le\! \norm{\ma^{-\frac{\ep}{2}}\mathcal{B}^{1-\frac{\epsilon}{4}} \bu^0}_{h} \!+ \!\int_0^{t_n} \norm{ \mathcal{B}^{1-\frac{\epsilon}{4}} e^{- (t_n - \sigma)\mathcal{B}}e^{-\kappa  (t_n-\sigma)} \ma^{-\frac{\ep}{2}} \widetilde{\mNu}(\sigma) }_{h} \, d\sigma      \no\\
             & \le  \norm{ \bu^0}_{H_h^4} + C_\epsilon \int_0^{t_n} (t_n-\sigma)^{-\left(1 - \frac{\epsilon}{4}\right)} e^{-\kappa(t_n-\sigma)} \norm{\ma^{-\frac{\ep}{2}}\widetilde{\mNu}(\sigma)}_{h} \, d\sigma\no \\
            %  & \le C_0  + C_\epsilon C_\mathcal{N} \int_0^{t_n} (t_n-\sigma)^{-1 + \frac{\epsilon}{4}} e^{-\kappa(t_n-\sigma)} \, d\sigma                                                          \\
             & \le C_0 +  C_\epsilon C_\mathcal{N} \int_0^{\infty} \omega_1^{-1 + \frac{\epsilon}{4}} e^{- \omega_1} \, d\omega_1                                                                                                                                 \no \\
             &\le C_0 +  C_\epsilon C_\mathcal{N}\kappa^{-\frac{\epsilon}{4}} \Gamma\left(\frac{\epsilon}{4}\right) ,\label{eq_combined_estimate}
    \end{align}
    where $\omega_1 = \kappa(t_n-\sigma)$.
    Combining the estimate \eqref{eq_combined_estimate} with \eqref{eq_Nu_bound}, there exists a constant $\mathcal{M}_u > 0$, independent of $\tau$ and $h$, such that the following uniform bound holds for $\bu^n$:
    \begin{equation}\label{eq4_13}
          \norm{(-\Delta_h)^{2-\epsilon}\bu^n}_{h}\le C\norm{\ma^{-\frac{\ep}{2}}\mathcal{B}^{1-\frac{\epsilon}{4}}\bu^n}_{h}  \le \mathcal{M}_u\left(C_0,  C_\mathcal{N}, \epsilon \right),\quad 0\le n\le m.
    \end{equation}

 Combining \eqref{eq_triangle_combined} and \eqref{eq4_13} with Lemma \ref{gn_bound0} yields the following bound for the nonlinear term:
    \ba\label{eqa4_13}
         \norm{\ma^{\frac{\ep}{2}}\mNQ^n}_{h} &\le  g^n\left( \norm{\ma^{\frac{\ep}{2}}\muQa^n}_{h}+\norm{\ma^{\frac{\ep}{2}}\muQb^n}_{h}\right)
         \\&\le C(\norm{\bQ^n}_{\infty}\norm{\ma^{\frac{\ep}{2}}\bQ^n}_{h}+\norm{\bu^n}_{\infty}\norm{(-\Delta_h)^{2-\epsilon}\bu^n}_{h}) \le C(C_2,\mathcal{M}_u).
    \ed
    Applying the smoothing estimate \eqref{eq_triangle_combined} in conjunction with \eqref{eqa4_13}, there exists a constant $\mathcal{M}_Q > 0$, independent of $\tau$ and $h$, such that the following uniform bound holds for $\bQ^n$:
    \begin{equation}\label{eqa4_14}
        \norm{\ma\bQ^n}_{h} = \norm{\ma^{1 - \frac{\epsilon}{2}}(\ma^{\frac{\ep}{2}}\bQ^n)}_{h} \le \mathcal{M}_Q\left(C_0,  C_2,\mathcal{M}_u, \epsilon \right),\quad 0\le n\le m.
    \end{equation}
    Finally, combining the estimates \eqref{eq4_13} and \eqref{eqa4_14}, there exists a constant $\mathcal{M}_1 > 0$, independent of $\tau$ and $h$, such that
    \ba
          \norm{\mNu^n}_h+\norm{\mNQ^n}_h
          \le \mathcal{M}_1\left(\mathcal{M}_Q ,C_1\right),\quad 0\le n\le m.
    \ed
\end{proof}
\subsection{Error estimates for the GSAV-EI scheme}
We first define the continuous counterparts of $\mNQ$ and $\mNu$,
denoted by $\mNQz(t)$ and $\mNuz(t)$, respectively, as
\begin{equation*}
\begin{aligned}
    \mNQz(t)&:=-g(t)\mathbf H(t),\qquad
    \mNuz(t):=-g(t)\mu(t),
    \qquad 0\le t\le T.
\end{aligned}
\end{equation*}
By substituting the exact solutions into the numerical scheme \eqref{eq4_6}-\eqref{eq4_8} and rearranging the terms,  we can express the exact solutions at time $t_{n+1}$ as:
\begin{subequations}\label{eq:system}
\begin{align}
\mathbf{Q}(t_{n+1}) & = e^{-\tau \mLQ} \mathbf{Q}(t_n) + \int_0^\tau e^{-(\tau-\varsigma)\mLQ} \left( \mNQz(t_n) + \mathbf{T}_{\mathbf{Q}}^n(\varsigma) \right) d\varsigma,\label{eq:system-a}\\
u(t_{n+1}) & = e^{-\tau \mLu} u(t_n) + \int_0^\tau e^{-(\tau-\varsigma)\mLu} \left( \mNuz(t_n) + T_u^n(\varsigma) \right) d\varsigma,\label{eq:system-b}\\
\delta_\tau s(t_{n+1}) & =-  \left\langle \mNQz(t_n), \delta_\tau \mathbf{Q}(t_{n+1}) \right\rangle_h - \left\langle \mNuz(t_n), \delta_\tau u(t_{n+1}) \right\rangle_h  + T_s^n, \label{eq:system-c}
\end{align}
\end{subequations}
where the truncation errors $\mathbf{T}_{\mathbf{Q}}^n(\varsigma)$, $T_u^n(\varsigma)$, and $T_s^n$  are defined as
\begin{equation}\label{eq_truncation_errors}
\begin{aligned}
\mathbf T_{\mathbf Q}^n(\varsigma)
&:=
\mNQz(t_n+\varsigma)-\mNQz(t_n)
+K\left(
\Delta\mathbf Q(t_n+\varsigma)
-\Delta_h\mathbf Q(t_n+\varsigma)
\right),\\
T_u^n(\varsigma)
&:=
\mNuz(t_n+\varsigma)-\mNuz(t_n)
+2B_0\left(
\Delta_h^2u(t_n+\varsigma)
-\Delta^2u(t_n+\varsigma)
\right),\\
T_s^n
&:=
\delta_\tau s(t_{n+1})
+\left\langle
\mNQz(t_n),\delta_\tau\mathbf Q(t_{n+1})
\right\rangle_h
+\left\langle
\mNuz(t_n),\delta_\tau u(t_{n+1})
\right\rangle_h .
\end{aligned}
\end{equation}
By the regularity assumptions in \eqref{eqa_19} and Taylor's theorem, the truncation errors satisfy
\begin{equation*}
\begin{aligned}
\left\| \mathbf{T}_{\mathbf{Q}}^n(\varsigma) \right\|_h &\le C (\tau + h^2), ~
\left\| T_u^n(\varsigma) \right\|_h \le C (\tau + h^2), ~
\left| T_s^n \right| \le C (\tau + h^2), \quad 0\le \varsigma \le \tau.
\end{aligned}
\end{equation*}
Applying  $\frac{1}{\tau}(\mq(\tau \mLQ)+\tau \mLQ)$ to both sides of \eqref{eq:system-a} and $\frac{1}{\tau}(\mq(\tau \mLu)+\tau \mLu)$ to \eqref{eq:system-b}, we can derive the following equivalent transformed consistency relations:
% \ba \label{eqa_14}
% \mq(\tau \mLQ) \delta_\tau \mathbf{Q}(t_{n+1})+\mLQ \mathbf{Q}(t_{n+1})  & =  \mNQz(t_n) +\widetilde{ \mathbf{T}}_{\mathbf{Q}}^n,\\
% \mq(\tau \mLu) \delta_\tau u(t_{n+1}) +\mLu u(t_{n+1}) & =  \mNuz(t_n) + \widetilde{T}_u^n,
% \ed
% where we have
% \begin{small}
    \ba \label{eqa_14}
\mq(\tau \mLQ) \delta_\tau \mathbf{Q}(t_{n+1})+\mLQ \mathbf{Q}(t_{n+1})  & =  \mNQz(t_n) +\widetilde{ \mathbf{T}}_{\mathbf{Q}}^n,\\
\mq(\tau \mLu) \delta_\tau u(t_{n+1}) +\mLu u(t_{n+1}) & =  \mNuz(t_n) + \widetilde{T}_u^n,
\ed
% \end{small}
 where  $\widetilde{ \mathbf{T}}_{\mathbf{Q}}^n= \int_0^\tau \mathcal K_{\mLQ}(\varsigma)  \mathbf{T}_{\mathbf{Q}}^n(\varsigma) d\varsigma$ and $\widetilde{T}_u^n= \int_0^\tau \mathcal K_{\mLu}(\varsigma)  T_u^n(\varsigma) d\varsigma$ with
\[
\mathcal K_{\Lambda_h}(\varsigma)
:=
\frac{1}{\tau}
\bigl(\mq(\tau\Lambda_h)+\tau\Lambda_h\bigr)
e^{-(\tau-\varsigma)\Lambda_h},\quad \Lambda_h\in\{\mLQ,\mLu\}.
\]
Since $\Lambda_h$ is self-adjoint and positive definite, we can calculate
\[
\int_0^\tau\mathcal K_{\Lambda_h}(\varsigma)\,d\varsigma=I,
\qquad
\vertiii{\int_r^\tau\mathcal K_{\Lambda_h}(\varsigma)\,d\varsigma}_h
\le1,\quad 0  \le r \le \tau.
\]
Using
$\mathbf{T}_{\mathbf{Q}}^n(\varsigma)=\mathbf{T}_{\mathbf{Q}}^n(0)+\int_0^\varsigma\partial_r\mathbf{T}_{\mathbf{Q}}^n(r)\,dr$
and Fubini's theorem, we obtain
\ba
\widetilde{ \mathbf{T}}_{\mathbf{Q}}^n
={}&
\left(
\int_0^\tau
\mathcal K_{\mLQ}(\varsigma)\,d\varsigma
\right)\mathbf{T}_{\mathbf{Q}}^n(0)
\nonumber\\
&+
\int_0^\tau
\mathcal K_{\mLQ}(\varsigma)
\left(
\int_0^\varsigma
\partial_r\mathbf{T}_{\mathbf{Q}}^n(r)\,dr
\right)d\varsigma
\nonumber\\
={}&
\mathbf{T}_{\mathbf{Q}}^n(0)
+
\int_0^\tau
\left(
\int_r^\tau
\mathcal K_{\mLQ}(\varsigma)\,d\varsigma
\right)
\partial_r\mathbf{T}_{\mathbf{Q}}^n(r)\,dr.
\ed
A similar estimate holds for $ \widetilde{T}_u^n$.
By  the
regularity assumptions \eqref{eqa_19}, we have
\[
\|\mathbf T_{\mathbf Q}^n(0)\|_h+\|T_u^n(0)\|_h\le Ch^2,
\qquad
\max_{0\le\varsigma\le\tau}
\left(
\|\partial_\varsigma\mathbf T_{\mathbf Q}^n(\varsigma)\|_h
+\|\partial_\varsigma T_u^n(\varsigma)\|_h
\right)\le C.
\]
Consequently, we can establish the following bounds for the  truncation errors:
\ba \label{eqft}
\left\|\widetilde{\mathbf T}_{\mathbf Q}^n\right\|_h
+\left\|\widetilde T_u^n\right\|_h
\le C(\tau+h^2).
\ed

Subtracting the consistency relations \eqref{eqa_14} from the
numerical schemes \eqref{eq_scheme_Q} and \eqref{eq_scheme_u},
respectively, and subtracting the $s$-equation in
\eqref{eq:system-c} from \eqref{eq_scheme_s}, we obtain the following
error equations:
\begin{align}
    \mq(\tau \mLQ)                           & \ea + \mLQ \be_{\mathbf{Q}}^{n+1} = \mNQ^n-\mNQz(t_n)  - \widetilde{ \mathbf{T}}_{\mathbf{Q}}^n,\label{eq_error_s_final}                                     \\
    \mq(\tau \mLu)                           & \eb + \mLu e_{u}^{n+1} =\mNu^n-\mNuz(t_n)  - \widetilde{T}_u^n,\label{eq_error_s_final2}                                                                     \\
    \frac{1}{\tau} ({e}_{s}^{n+1}-e_{s}^{n}) & = - \left\langle \mNQ^n, \ea \right\rangle_h - \left\langle \mNu^n, \eb \right\rangle_h  \no                                     \\
                                               - \langle  \mNQ^n&-\mNQz(t_n), \da  \rangle_h - \left\langle \mNu^n-\mNuz(t_n), \db \right\rangle_h - T_s^n.\label{eq_error_s_final3}
    % \\
    % {e}_s^{n+1} &
    % = \xi^{n+1}  \tilde{e}_s^{n+1} + (1 - \xi^{n+1}) {\left( E_{1h}^{n+1} - s(t_{n+1}) \right)}.\label{eq_error_s_final4}
\end{align}
% Here and in what follows, the analogous notation convention is applied to other operators and variables.
% The corresponding truncation errors $\widetilde{ \mathbf{T}}_{\mathbf{Q}}^n$, $T_u^n$, and $T_s^n$, arising from the time derivative and Laplace operator discretizations of $\mathbf{Q}$ and $u$, and the time derivative discretizations of   $s$, respectively, can be estimated as follows:
% \begin{equation} \label{eq_truncation_estimate}
%     \|\widetilde{ \mathbf{T}}_{\mathbf{Q}}^n\|_h + \|T_u^n\|_h  \le C (\tau + h^2), \quad |T_s^n| \le C \tau.
% \end{equation}
% \begin{proposition} \label{thm_global_error}
%  Under the regularity assumptions in \eqref{eqa_19} and Theorem \ref{th4_1}, the fully discrete solution $(\mathbf{Q}_h^n, u_h^n, s^n)$ of the GSAV-EI scheme satisfies the following error estimate, provided that  $\tau$ and $h$ are sufficiently small:
%     \begin{equation} \label{eq_final_total_error}
%         \| \be_{\mathbf{Q}}^{n} \|_{H^1_h} + \| e_{u}^{n} \|_{H^2_h} + |e_s^n| \le C (\tau + h^2), \quad 0 \le n \le  m+1,
%     \end{equation}
%     where $C$ is a positive constant independent of $\tau$  and $h$.
% \end{proposition}
% \begin{proof}
The following lemma establishes the Lipschitz continuity of the nonlinear terms $\mNQ$ and $\mNu$ with respect to the error variables $\be_{\mathbf{Q}}^{n}$, $e_{u}^{n}$, and $e_s^n$.
    % Let $g^n$ be the numerical multiplier defined in \eqref{eq_scheme_s} and $g(t_n)$ be the exact continuous multiplier defined in \eqref{eq_modified_system}.
\begin{lemma}\label{thm_full_lipschitz}
  Under the regularity assumptions in \eqref{eqa_19} and Theorem \ref{th4_1},
%   the numerical multiplier $g^n$ satisfies the following Lipschitz continuity estimate:
%     \begin{equation}
%         |g^n - g(t_n)| \le C \left( |e_s^n| + \|\be_{\mathbf{Q}}^n\|_{H^1_h} + \|e_u^{n}\|_{H^2_h} + h^2 \right), \quad 0 \le n \le m.
%     \end{equation}
%     Consequently,
    the nonlinear terms $\mNQ$ and $\mNu$  satisfy the following Lipschitz continuity estimates:
    \ba\label{eq_full_Lip_Q}
    \| \mNQ^n -\mNQz(t_n)\|_h \le C_5 \left( |e_s^n| + \|\be_{\mathbf{Q}}^{n}\|_{H^1_h} + \|e_{u}^{n}\|_{H^2_h}+ h^2 \right),\quad 0 \le n \le m,\\
    \|  \mNu^n- \mNuz(t_n)\|_h \le C_6 \left( |e_s^n| + \|\be_{\mathbf{Q}}^{n}\|_{H^2_h} + \|e_{u}^{n}\|_{H^2_h}+ h^2 \right),\quad 0 \le n \le m,
    \ed
    where $C_5$ and $C_6$ are positive constants independent of $\tau$ and $h$.
\end{lemma}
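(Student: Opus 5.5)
We split each difference as
\[
\mNQ^n-\mNQz(t_n) = -(g^n-g(t_n))\,\muQ^n \;-\; g(t_n)\bigl(\muQ^n-\mathbf H(t_n)\bigr),
\]
and analogously for $\mNu^n-\mNuz(t_n)$ with $\mu_h^n$ and $\mu(t_n)$. Theorem~\ref{th4_1} gives $\|\muQ^n\|_h+\|\mu_h^n\|_h\le C$, since $g^n\ge g_*$ by Lemma~\ref{gn_bound0}. The exact factor $g(t_n)$ is bounded because $s(t)=E_1(t)$ along the exact solution, so $g(t)\equiv 1$. It therefore suffices to prove two bounds. The first is a Lipschitz bound for the scalar factor:
\[
|g^n-g(t_n)|\le C\bigl(|e_s^n|+\|\be_{\mathbf Q}^n\|_{H_h^1}+\|e_u^n\|_{H_h^2}+h^2\bigr).
\]
The second is a Lipschitz bound for the chemical potentials $\muQ^n-\mathbf H(t_n)$ and $\mu_h^n-\mu(t_n)$ in $\|\cdot\|_h$.

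For $g$ we write
\[
g^n-g(t_n)=\frac{\zeta(s^n)-\zeta(s(t_n))}{\zeta(E_{1h}^n)}+\zeta(s(t_n))\,\frac{\zeta(E_1(t_n))-\zeta(E_{1h}^n)}{\zeta(E_{1h}^n)\,\zeta(E_1(t_n))}.
\]
Here $s^n$ and $s(t_n)$ lie in the compact interval $[-C_s,\mathcal E_h^0]$ by \eqref{eq_a5}, and $E_{1h}^n$ lies in $[-\widetilde C_*,\widetilde C^*]$ by \eqref{eq_E1_bounds}. Hence $\zeta\in C^1$ is Lipschitz and bounded below by a positive constant on the relevant range. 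This reduces the bound to
\[
|E_{1h}[\mathbf Q_h^n,u_h^n]-E_1[\mathbf Q(t_n),u(t_n)]|\le C\bigl(\|\be_{\mathbf Q}^n\|_{H_h^1}+\|e_u^n\|_{H_h^2}+h^2\bigr).
\]
We get $h^2$ from the consistency of $E_{1h}$ evaluated at $\mathcal I_h$ of the exact solution, exactly as in \eqref{eq_initial_error}. The remainder $E_{1h}[\mathbf Q_h^n,u_h^n]-E_{1h}[\mathbf Q(t_n),u(t_n)]$ is handled by telescoping each multilinear term. For the coupling term, for example,
\[
\langle D_h^2u_h^n,\bM_h^nu_h^n\rangle_h-\langle D_h^2u,\bM u\rangle_h
\]
splits into pieces controlled by $\|D_h^2e_u^n\|_h\le\|\Delta_he_u^n\|_h$ (via \eqref{eq_discrete_hessian_bound}), $\|\be_{\mathbf Q}^n\|_{L_h^4}$ and $\|e_u^n\|_\infty$. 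These are bounded using the discrete Sobolev embeddings $H_h^1\hookrightarrow L_h^4$ and $H_h^2\hookrightarrow L^\infty$ (for $d\le3$) together with the uniform bounds \eqref{eqc7}. The polynomial bulk terms are treated by the mean value theorem with $L_h^4$ bounds.

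For the chemical potentials, the terms in $\mathbf H_{\rm p}$ and $\mu_{\rm p}$ are polynomial in $(\mathbf Q,u)$. Their differences are bounded in $\|\cdot\|_h$ by $C(\|\be_{\mathbf Q}^n\|_{L_h^4}+\|e_u^n\|_\infty)$ times $L^\infty$ bounds of both the numerical and exact fields. These $L^\infty$ bounds follow from $H_h^2$ control via Theorem~\ref{th4_1} and \eqref{eqc7}, and from \eqref{eqa_19}. The discrete-versus-continuous derivative mismatch, e.g.\ $D_h^2u(t_n)-D^2u(t_n)$ and $\operatorname{div}_{h,c}^{\,2}$ versus $\nabla\cdot\nabla\cdot$, contributes $Ch^2$ by Taylor expansion under \eqref{eqa_19}. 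For $\mathbf H_{{\rm d},h}$ we write
\[
u_h^nD_h^2u_h^n-uD_h^2u=e_u^nD_h^2u_h^n+u\,D_h^2e_u^n,
\]
bounded by $\|e_u^n\|_\infty\|\Delta_hu_h^n\|_h+\|u\|_\infty\|\Delta_he_u^n\|_h$. This explains why only $H_h^1$ error of $\mathbf Q$ is needed in \eqref{eq_full_Lip_Q}$_1$.

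The main obstacle is $\mu_{{\rm d},h}$, in particular the term $\operatorname{div}_{h,c}^{\,2}(\bM_h^nu_h^n-\bM u)$. By \eqref{eq_double_div_bound} it is bounded by $C\|\Delta_h(\bM_h^nu_h^n-\bM u)\|_h$. Expanding by a discrete product rule,
\[
\Delta_h(\be_{\mathbf Q}^nu_h^n)+\Delta_h(\bM e_u^n),
\]
forces $\|\Delta_h\be_{\mathbf Q}^n\|_h$, i.e.\ the $H_h^2$ error of $\mathbf Q$, to appear; this is why $C_6$ multiplies $\|\be_{\mathbf Q}^n\|_{H^2_h}$. The cross terms have the form $\nabla_h\be_{\mathbf Q}^n\cdot\nabla_hu_h^n$ and $\be_{\mathbf Q}^n\Delta_hu_h^n$. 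We bound them with discrete H\"older's inequality, e.g.\ $\|\nabla_h\be\|_{L_h^4}\|\nabla_h u_h^n\|_{L_h^4}$ and $\|\be\|_\infty\|\Delta_hu_h^n\|_h$. This needs $W^{1,4}_h$ and $L^\infty$ control of $u_h^n$, which we take from the $(-\Delta_h)^{2-\epsilon}$ bound of Theorem~\ref{th4_1}. The error factors are then bounded by $\|\be_{\mathbf Q}^n\|_{H_h^2}$ through embeddings. The term $\bM_h^n:D_h^2u_h^n$ is simpler and only needs $\|\be_{\mathbf Q}^n\|_{L_h^4}\|D_h^2u_h^n\|_{L_h^4}$ plus $\|\Delta_he_u^n\|_h$. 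Collecting all pieces gives \eqref{eq_full_Lip_Q}.
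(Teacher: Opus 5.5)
Your proposal is correct and follows the paper's proof essentially step for step. You use the same splitting of $\mathbf N_h^n-\mathbf N(t_n)$ into a $g$-difference and a chemical-potential difference, the same mean-value bound on $\zeta$ over a compact range, and the same $L^\infty$-based Lipschitz bounds for $\mathbf H_{\rm p}$, $\mu_{\rm p}$ and $\mathbf H_{{\rm d},h}$. For $\mu_{{\rm d},h}$ you likewise combine \eqref{eq_double_div_bound} with an $H_h^2$ product estimate, which is exactly where $\|\mathbf e_{\mathbf Q}^n\|_{H_h^2}$ enters $C_6$.

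You are somewhat more explicit than the paper: you justify the bound on $|E_{1h}^n-E_1(t_n)|$, which the paper only asserts, and you write out the discrete product rule. The only slip is cosmetic: $s(t_n)$ need not lie in $[-C_s,\mathcal E_h^0]$, but it does lie in $[-M_s,M_s]$, so any compact interval containing both ranges works.
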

\begin{proof}
       % We first decompose the nonlinear term $\muQ$ into two components: the polynomial component $\muQa$ and the derivative coupling component $\muQb$, as follows:
    % \begin{equation}\label{eqa_28}
    %     \begin{aligned}
    %         \muQ(\mathbf{Q}_h^n, u_h^n) =
    %          & \underbrace{ \A \mathbf{Q}_h^n - \B \left( (\mathbf{Q}_h^n)^2 - \frac{\operatorname{tr}((\mathbf{Q}_h^n)^2)}{3}\mathbf{I} \right) + \C \operatorname{tr}((\mathbf{Q}_h^n)^2)\mathbf{Q}_h^n +\left[  \frac{2B_0 q^4}{s_+^2} \mathbf{Q}_h^n (u_h^n)^2 \right] }_{\muQa^n} \\
    %          & + \underbrace{ \left[  \frac{2B_0 q^2}{s_+} \left( u_h^n D_h^2 u_h^n - \frac{\operatorname{tr}(u_h^n D_h^2 u_h^n)}{3}\mathbf{I} \right) \right] }_{\muQb^n}.
    %     \end{aligned}
    % \end{equation}
  Using the definition of $\muQaz$ in \eqref{eqd1} and
Theorem~\ref{th4_1}, we estimate the discrete $L^2$ norm of the difference as follows:
\begin{align}
\label{eq_muQb_discrete_bound1}
&\| \muQa(\bQ^n,u_h^n)
-\muQaz(\mathbf{Q}(t_n),u(t_n))\|_h
\nonumber\\
&\quad \le
|\A|\|\be_{\mathbf{Q}}^{n}\|_h
+\B
\left(
\|\mathbf{Q}_h^n\|_{\infty}
+\|\mathbf{Q}(t_n)\|_{\infty}
\right)
\|\be_{\mathbf{Q}}^{n}\|_h
\nonumber\\
&\qquad
+\C
\left(
\|\bQ^n\|_{\infty}^2
+\|\mathbf{Q}(t_n)\|_{\infty}^2
\right)
\|\be_{\mathbf{Q}}^{n}\|_h
+\kappa\|\be_{\mathbf{Q}}^{n}\|_h
\nonumber\\
&\qquad
+2B_0q^4s_+^{-2}
(\|\bQ^n\|_{\infty}
\|u_h^n+u(t_n)\|_{\infty}
\|e_u^{n}\|_h+\|(u_h^n)^2\|_{\infty}
\|\be_{\mathbf{Q}}^{n}\|_h)
% \nonumber\\
% &\qquad
% +2B_0q^4s_+^{-2}
% \|(u_h^n)^2\|_{\infty}
% \|\be_{\mathbf{Q}}^{n}\|_h
\nonumber\\
&\quad \le
C(\mathcal{M}_Q,C_1)\left(
\|\be_{\mathbf{Q}}^{n}\|_h
+\|e_u^{n}\|_h
\right).
\end{align}
    % Evaluating the difference between $\muQb$ and $\muQbz(t_n)$ in the $\ell^2$-norm, we can  isolate the spatial truncation error from the purely discrete difference:
    % \begin{align} \label{eq_muQb_triangle}
    %     \| \muQb^n - \muQbz(t_n) \|_h
    %      & \le \underbrace{ \left\| \muQbz(t_n) - \muQb^n(  u(t_n)) \right\|_h }_{\text{Spatial truncation error} \le C h^2}  + \left\| \muQb^n(  u(t_n)) - \muQb^n(u_h^n) \right\|_h.
    % \end{align}
 Using the definition of $\mathbf H_{{\rm d},h}$ in \eqref{eqc8} and
Theorem~\ref{th4_1}, we estimate the discrete $L^2$ norm of the difference as follows:
    \ba \label{eq_muQb_discrete_bound}
    &\left\| \mathbf H_{{\rm d},h}(  u(t_n)) - \mathbf H_{{\rm d},h}(u_h^n) \right\|_h\\
    &= C\left\| \left(   u(t_n) - u_h^n \right) D_h^2   u(t_n) + u_h^n \left( D_h^2   u(t_n) - D_h^2 u_h^n \right) \right\|_h \\
    % &\le C\left\| e_u^n D_h^2   u(t_n) \right\|_h + \left\| u_h^n D_h^2 e_u^n \right\|_h \\
    &\le C\|D_h^2   u(t_n)\|_{\infty} \|e_u^n\|_h + \|u_h^n\|_{\infty} \|D_h^2 e_u^n\|_h \\
    &\le C \left( \|e_u^n\|_h + \|D_h^2 e_u^n\|_h \right) \le C(C_1)  \|e_u^n\|_{H^2_h}.
    \ed
 Combining the bounds in \eqref{eq_muQb_discrete_bound1}--\eqref{eq_muQb_discrete_bound}, we obtain the desired Lipschitz continuity estimate for $\muQ$:
    \ba \label{eqr1}
        \| \muQ^n-\muQz(t_n)\|_h & \le \| \muQa(\bQ^n,u_h^n)
-\muQaz(\mathbf{Q}(t_n),u(t_n))\|_h  \\
&+ \left\| \mathbf H_{{\rm d},h}(  u(t_n)) - \mathbf H_{{\rm d},h}(u_h^n) \right\|_h + \left\| \muQbz(t_n) - \mathbf H_{{\rm d},h}(  u(t_n)) \right\|_h       \\
                                 & \le C_3(\mathcal{M}_Q,C_1) \left( \|\be_{\mathbf{Q}}^{n}\|_h + \|e_{u}^{n}\|_{H^2_h} + h^2 \right),
    \ed
    where the last term is bounded by $C h^2$ due to the spatial truncation error.

    Next, for  $\muu$ defined in \eqref{eqc8},
    evaluating the difference between $\muua(\mathbf Q_h^n,u_h^n)$ and  $\muuaz(t_n)$ in the norm $\|\cdot\|_h$ and using Theorem \ref{th4_1}, we can derive the following estimate:
    \begin{align}
        \| \muua(\mathbf Q_h^n,u_h^n)-\muuaz(t_n)\|_h \le C(\mathcal{M}_Q,C_1) \left( \|\be_{\mathbf{Q}}^{n}\|_h + \|e_{u}^{n}\|_h \right).
    \end{align}
    For the term $\mu_{{\rm d},h}$ defined in \eqref{eqc8},  under the  regularity assumptions in \eqref{eqa_19} and Theorem \ref{th4_1}, along with  \eqref{eq_double_div_bound}, we can derive the following estimate:
 \begin{align*}
&\|\mu_{{\rm d},h}(\mathbf Q_h^n,u_h^n)-\mu_{{\rm d},h}(t_n)\|_h\\
&\le
C
\left\|
D_h^2u(t_n):\bM(t_n)
-
D_h^2u_h^n:\bM_h^n
\right\|_h
+
C
\left\|
\operatorname{div}_{h,c}^{\,2}
\left(
\bM_h^n u_h^n-\bM(t_n)u(t_n)
\right)
\right\|_h
\nonumber\\
&\le
C\left(
\|\be_{\mathbf Q}^n\|_h
+
\|e_u^n\|_{H_h^2}
\right)
+
C
\left\|
\bM_h^n u_h^n-\bM(t_n)u(t_n)
\right\|_{H_h^2}
\nonumber\\
&\le
C\left(
\|\be_{\mathbf Q}^n\|_h
+
\|e_u^n\|_{H_h^2}
\right)
+
C\left(
\|\bM_h^n-\bM(t_n)\|_{H_h^2}\|u_h^n\|_{H_h^2}
+
\|\bM(t_n)\|_{H_h^2}\|e_u^n\|_{H_h^2}
\right)
\nonumber\\
&\le
C(\mathcal{M}_Q,C_1)\left(
\|\be_{\mathbf Q}^n\|_{H_h^2}
+
\|e_u^n\|_{H_h^2}
\right),
% \label{eq_muub_error_bound}
\end{align*}
where we have used the discrete Sobolev embedding
$H_h^2\hookrightarrow L_h^\infty$ for $d\le3$,
the product estimate
\[
\|vw\|_{H_h^2}
\le C\|v\|_{H_h^2}\|w\|_{H_h^2}.
\]
Combining the above bounds, we obtain the desired Lipschitz continuity estimate for $\muu^n$:
    \ba \label{eqr2}
        \| \muu^n-\muuz(t_n)\|_h & \le  \| \muua(\mathbf Q_h^n,u_h^n)-\muuaz(t_n)\|_h \\&+ \|\mu_{{\rm d},h}(\mathbf Q_h^n,u_h^n)-\mu_{{\rm d},h}(t_n)\|_h + \| \mu_{{\rm d},h}(t_n) - \mu_{{\rm d}}( t_n) \|_h     \\
                                 & \le C_4(\mathcal{M}_Q,C_1) \left( \|\be_{\mathbf{Q}}^{n}\|_{H^2_h} + \|e_{u}^{n}\|_{H^2_h} + h^2 \right).
    \ed

    Evaluating the difference between the numerical multiplier $g^n$ and the exact multiplier $g(t_n)$ and applying the mean value theorem and the condition that $\zeta\in C^1(\mathbb R;(0,\infty))$, we can express it as follows:
\ba \label{eq_14}
|g^n - g(t_n)|
&\le
\left|
\frac{\zeta(s^n)}{\zeta(E_{1h}^n)}
-
\frac{\zeta(s(t_n))}{\zeta(E_{1h}^n)}
\right|
+
\left|
\frac{\zeta(s(t_n))}{\zeta(E_{1h}^n)}
-
\frac{\zeta(s(t_n))}{\zeta(E_1(t_n))}
\right|
\\
&\le
\frac{\zeta'(\varpi_1)}{\zeta(E_{1h}^n)}
|s^n-s(t_n)|
+
\zeta(s(t_n))
\frac{\zeta'(\varpi_2)}
{\zeta(E_{1h}^n)\zeta(E_1(t_n))}
\left|E_1(t_n)-E_{1h}^n\right|
\\
&\le
\frac{\zeta'(\varpi_1)}{\zeta(-\widetilde{C}_*)}
|s^n-s(t_n)|
+
\zeta(C^*)
\frac{\zeta'(\varpi_2)}
{\zeta(-\widetilde{C}_*)\zeta(-C_*)}
\left|E_1(t_n)-E_{1h}^n\right|
\\
&\le
C_g(\mathcal{E}_h^0,\widetilde{C}_*,C_*,C^*,C_1,\zeta)
\left(
|e_s^n|
+
\|\be_{\mathbf{Q}}^n\|_{H^1_h}
+
\|e_u^n\|_{H^2_h}
+
h^2
\right),
\ed
    where $s(t_n)=E_1(t_n)$, $\varpi_1$ is between $s^n$ and $s(t_n)$,  ${\varpi_2}$ is between $E_{1h}^n$ and $E_1(t_n)$, and $\zeta'(\cdot)$ is bounded on the closed  interval.
    % Applying the Mean Value Theorem to $\mathcal{T}_1$ with $\xi$  between $s^n$ and $s(t_n)$ yields the following bound:
    % \begin{equation*}
    %     \mathcal{T}_1 \le \frac{\zeta(\xi)}{\zeta(E_{1h}^n)} |s^n - s(t_n)| \le C |e_s^n|, \xi
    % \end{equation*}
    % For $\mathcal{T}_2$, applying the Mean Value Theorem  with $\widetilde{\eta}$ between $E_{1h}^n$ and $E_1(t_n)$ and the triangle inequality yields the following bound:
    % \begin{align}
    %     \mathcal{T}_2 & = \zeta(s(t_n)) \left| \zeta(-E_{1h}^n) - \zeta(-E_1(t_n)) \right| \notag             \\
    %                   & \le \zeta(s(t_n)) \zeta(-\widetilde{\eta}) \left| E_1(t_n) - E_{1h}^n \right| \notag \\
    %                   & \le C \left({\left| E_{1h}(\mathbf{Q}(t_n), u(t_n)) - E_{1h}^n \right|}
    %     + {\left| E_1(t_n) - E_{1h}(\mathbf{Q}(t_n), u(t_n)) \right|} \right)
    %     \notag                                                                                                                                      \\
    %                   & \le C \left( \|\be_{\mathbf{Q}}^n\|_{H^1_h} + \|e_u^n\|_{H^2_h} + h^2 \right). \notag
    % \end{align}
    % Combining the estimates for $\mathcal{T}_1$ and $\mathcal{T}_2$ in \eqref{eq_g_diff_split_2step} yields the desired Lipschitz continuity estimate for $g^n$:
    % \begin{equation}\label{eq_14}
    %     |g^n - g(t_n)| \le C \left( |e_s^n| + \|\be_{\mathbf{Q}}^n\|_{H^1_h} + \|e_u^{n}\|_{H^2_h} + h^2 \right).
    % \end{equation}
    Based on \eqref{eqr1}, \eqref{eqr2} and  \eqref{eq_14}, we can derive the following estimates for $\mNQ^n -\mNQz(t_n)$ and $\mNu^n - \mNuz(t_n)$:
    \begin{equation} \label{eq_J_Q_bound}
        \begin{aligned}
            \| \mNQ^n -\mNQz(t_n)\|_h
             & \le \| \muQz(t_n) \|_h |g^n - g(t_n)| + g^n \|\muQ^n- \muQz(t_n)\|_h                                                                                                       \\
            %  & \le  CC_g \left( |e_s^n| + \|\be_{\mathbf{Q}}^n\|_{H^1_h} + \|e_u^{n}\|_{H^2_h} +h^2\right) + C_3 g^* \left( \|\be_{\mathbf{Q}}^{n}\|_{H^1_h} + \|e_{u}^{n}\|_{H^2_h} + h^2 \right) \\
             & \le C_5(C_g,C_3,g^*) \left( |e_s^n| + \|\be_{\mathbf{Q}}^{n}\|_{H^1_h} + \|e_{u}^{n}\|_{H^2_h}+h^2 \right),                                                                               \\
            \| \mNu^n - \mNuz(t_n)\|_h
             & \le    \| \muuz(t_n) \|_h |g^n - g(t_n)| + g^n \|\muu^n- \muuz(t_n)\|_h
                                                                                            \\
            %  & \le  C \left( |e_s^n| + \|\be_{\mathbf{Q}}^n\|_{H^1_h} + \|e_u^{n}\|_{H^2_h} +h^2\right) + C \left( \|\be_{\mathbf{Q}}^{n}\|_{H^2_h} + \|e_{u}^{n}\|_{H^2_h} + h^2 \right) \\
             & \le C_6(C_g,C_4,g^*) \left( |e_s^n| + \|\be_{\mathbf{Q}}^{n}\|_{H^2_h} + \|e_{u}^{n}\|_{H^2_h}+h^2 \right),
        \end{aligned}
    \end{equation}
    which completes the proof.
\end{proof}

Then we can establish the following error energy estimate for the GSAV-EI scheme.
    Taking the discrete inner products of (\ref{eq_error_s_final}) with $\ea$ and (\ref{eq_error_s_final2}) with $\eb$, and summing the results yields the following error energy identity:
    \begin{small}
           \ba \label{eqa_17}
    &\left\langle  \mathcal{Q}(\tau \mLQ) \ea, \ea \right\rangle_h + \left\langle \mLQ \be_{\mathbf{Q}}^{n+1}, \ea \right\rangle_h
    + \left\langle  \mathcal{Q}(\tau \mLu) \eb, \eb \right\rangle_h  \\
    &+ \left\langle \mLu e_{u}^{n+1}, \eb \right\rangle_h
    = \left\langle   \mNQ^n -\mNQz(t_n) - \widetilde{ \mathbf{T}}_{\mathbf{Q}}^n, \ea \right\rangle_h + \left\langle \mNu^n-\mNuz(t_n)- \widetilde{T}_u^n, \eb \right\rangle_h.
    \ed
    \end{small}
    Applying the algebraic identity to the LHS of \eqref{eqa_17} together with \eqref{eq_inner_Q} yields the following expansion:
    \begin{equation} \no
        \begin{aligned}
            \text{LHS} & = \iaQ{\ea} + \frac{1}{2\tau} \left[  \ilQ{\be_{\mathbf{Q}}^{n+1}} - \ilQ{\be_{\mathbf{Q}}^{n}}  + \left\langle \tau^2 \mLQ \ea, \ea \right\rangle_h \right]      \\
                       & \quad + \iau{\eb} + \frac{1}{2\tau} \left[ \ilu{e_{u}^{n+1}}  - \ilu{e_{u}^{n}}  +\left\langle \tau^2 \mLu \delta_\tau  e_{u}^{n+1}, \eb \right\rangle_h  \right] \\
                       & = \! \frac{1}{2\tau} \left[  \ilQ{\be_{\mathbf{Q}}^{n+1}}\! - \!\ilQ{\be_{\mathbf{Q}}^{n}} + \!\ilu{e_{u}^{n+1}}  \!-\! \ilu{e_{u}^{n}}  \right]
            + \ibQ{\ea} \!+\!\ibu{\eb} .
        \end{aligned}
    \end{equation}
    Applying the Cauchy-Schwarz and Young's inequalities to the RHS of \eqref{eqa_17}, along with Lemma \ref{thm_full_lipschitz} and \eqref{eqft}, yields the following estimate:
    \begin{equation} \no
        \begin{aligned}
            \text{RHS} & \leq \!\frac{1}{4}\left( \ia{\ea} +\! \ia{\eb} \right) +    \ia{\mNQ^n -\!\mNQz(t_n)-\!\widetilde{ \mathbf{T}}_{\mathbf{Q}}^n} +\!\ia{\mNu^n-\!\mNuz(t_n)-\!\widetilde{T}_u^n}     \\
                       & \leq \frac{1}{4} \left( \|\ea\|_h^2 + \|\eb\|_h^2 \right) +   C (\tau+h^2)^2                                                                                                                    \\
                       & \quad +   C_5 \left( |e_s^n|^2 + \|\be_{\mathbf{Q}}^{n}\|_{H_h^1}^2 + \|e_{u}^{n}\|_{H_h^2}^2 \right) + C_6 \left( |e_s^n|^2 + \|\be_{\mathbf{Q}}^{n}\|_{H_h^2}^2 + \|e_{u}^{n}\|_{H_h^2}^2 \right) \\
                       & \leq \frac{1}{4} \left( \|\ea\|_h^2 + \|\eb\|_h^2 \right)
            + C \left( |e_s^n|^2 + \|\be_{\mathbf{Q}}^{n}\|_{H_h^2}^2 + \|e_{u}^{n}\|_{H_h^2}^2 \right)+ C (\tau+h^2)^2.
        \end{aligned}
    \end{equation}
    Substituting the above bounds  into  \eqref{eqa_17} and using  Lemma \ref{lem_norm_equivalence} yields the following  error energy inequality:
    \begin{align}
        % \begin{aligned}
              \frac{3}{4} ( \ibQ{\ea} &+ \ibu{\eb} ) + \frac{1}{2\tau} \left[  \ilQ{\be_{\mathbf{Q}}^{n+1}} - \ilQ{\be_{\mathbf{Q}}^{n}}   + \ilu{e_{u}^{n+1}}  - \ilu{e_{u}^{n}}  \right] \no\\
              \leq& C \left( |e_s^n|^2 + \|\be_{\mathbf{Q}}^{n}\|_{H_h^2}^2 + \|e_{u}^{n}\|_{H_h^2}^2 \right)+ C (\tau+h^2)^2.\label{eq_error_energy_recursive}
        % \end{aligned}
    \end{align}

    Next, we return to the error equation \eqref{eq_error_s_final3} for ${s}$.
    Multiplying  \eqref{eq_error_s_final3}  by $2{e}_s^{n+1}$, we can derive the following identity:
    \begin{equation} \label{eq_es_identity}
        \begin{aligned}
             & \frac{1}{\tau} \left( |{e}_s^{n+1}|^2 - |e_s^n|^2 + |{e}_s^{n+1} - e_s^n|^2 \right)
            = -2 {e}_s^{n+1} \Big[  \langle \mNQ^n, \ea \rangle_h + \langle \mNu^n, \eb \rangle_h \Big]                                                                \\
             & -\! 2{e}_s^{n+1} \Big[ \left\langle  \mNQ^n-\!\mNQz(t_n), \da  \right\rangle_h\! + \!\left\langle  \mNu^n-\mNuz(t_n), \db \right\rangle_h +\!T_{s}^n  \Big]=: \!\mathcal{J}_1 + \!\mathcal{J}_2.
        \end{aligned}
    \end{equation}
    For the term $\mathcal{J}_1$, applying the Cauchy-Schwarz and Young's inequalities, along with Lemma \ref{gn_bound0} and Theorem \ref{th4_1}, yields the following estimate:
    \begin{equation} \label{eq_I2_bound}
        \begin{aligned}
            \mathcal{J}_1 & \le 2 \mathcal{M}_1 |{e}_s^{n+1}| \left( \|\ea\|_h + \|\eb\|_h \right)                           \\
                          & \le 16 \mathcal{M}_1^2 |{e}_s^{n+1}|^2 + \frac{1}{8} \|\ea\|_h^2 + \frac{1}{8} \|\eb\|_h^2.
        \end{aligned}
    \end{equation}
    For the term $\mathcal{J}_2$, applying the Cauchy-Schwarz and Young's inequalities, along with Lemma \ref{thm_full_lipschitz}, yields the following estimate:
    \begin{equation} \label{eq_I1_bound}
        \begin{aligned}
            \mathcal{J}_2 & \le  2|{e}_s^{n+1}|\cdot   \left(C_5 \left( |e_s^n| + \|\be_{\mathbf{Q}}^{n}\|_{H_h^1} + \|e_{u}^{n}\|_{H_h^2}+ h^2 \right)\right.                                      \\
                          & \qquad \left.
            +C_6 \left( |e_s^n| + \|\be_{\mathbf{Q}}^{n}\|_{H_h^2} + \|e_{u}^{n}\|_{H_h^2}+ h^2 \right) +C(\tau+h^2) \right)                                                                        \\
                          & \le |{e}_s^{n+1}|^2+ C \left(   |e_s^n|^2 + \|\be_{\mathbf{Q}}^{n}\|_{H_h^1}^2 + \|\be_{\mathbf{Q}}^{n}\|_{H_h^2}^2+ \|e_{u}^{n}\|_{H_h^2}^2 + (\tau+h^2)^2  \right).
        \end{aligned}
    \end{equation}
    Substituting  \eqref{eq_I2_bound}--\eqref{eq_I1_bound} back into \eqref{eq_es_identity} and rearranging the equation yields the following estimate:
    % \begin{equation} \no
    %     \begin{aligned}
    %         \frac{1}{\tau} & \left( |{e}_s^{n+1}|^2 - |e_s^n|^2 \right) \le \left( 16  \mathcal{M}_1^2 + 1 \right) |{e}_s^{n+1}|^2 + C \left(   |e_s^n|^2 + \|\be_{\mathbf{Q}}^{n}\|_{H_h^1}^2 + \|\be_{\mathbf{Q}}^{n}\|_{H_h^2}^2+ \|e_{u}^{n}\|_{H_h^2}^2 + (\tau+h^2)^2  \right).
    %     \end{aligned}
    % \end{equation}
    % Multiplying both sides by $\tau$ and rearranging the equation, we obtain
    \begin{equation*}
\begin{aligned}
        &\left( 1 - \tau \left( 16  \mathcal{M}_1^2 + 1 \right) \right) |{e}_s^{n+1}|^2
        \le |e_s^n|^2 + \frac{1}{8} \tau\|\ea\|_h^2 + \frac{1}{8} \tau\|\eb\|_h^2\\&+ C \tau \left( |e_s^n|^2 + \|\be_{\mathbf{Q}}^{n}\|_{H_h^1}^2 + \|\be_{\mathbf{Q}}^{n}\|_{H_h^2}^2+ \|e_{u}^{n}\|_{H_h^2}^2 + (\tau+h^2)^2 \right).
  \end{aligned}
\end{equation*}
    Assuming that  $\tau$ is sufficiently small to satisfy $\tau \left( 16  \mathcal{M}_1^2 + 1 \right) \le \frac{1}{2}$ and dividing both sides by this coefficient yields the following recursive inequality for $|e_s^{n+1}|^2$:
    \begin{equation*}
        \begin{aligned}
            |{e}_s^{n+1}|^2 &\le|e_s^n|^2+ \frac{1}{4}\tau \|\ea\|_h^2 + \frac{1}{4}\tau \|\eb\|_h^2
            % & \left( 1 + 2\tau \left( 16  \mathcal{M}_1^2 + 1 \right)\right) |e_s^n|^2 + {C} \tau \left( |e_s^n|^2 +\|\be_{\mathbf{Q}}^{n}\|_{H_h^1}^2 + \|\be_{\mathbf{Q}}^{n}\|_{H_h^2}^2+ \|e_{u}^{n}\|_{H_h^2}^2 + (\tau+h^2)^2 \right),
            \\
              &+ C \tau \left( |e_s^n|^2 +\|\be_{\mathbf{Q}}^{n}\|_{H_h^1}^2 + \|\be_{\mathbf{Q}}^{n}\|_{H_h^2}^2+ \|e_{u}^{n}\|_{H_h^2}^2 + (\tau+h^2)^2 \right),
        \end{aligned}
    \end{equation*}
    where we have used the inequality  $\frac{1}{1-x} \le 1+2x$ for $x \in [0, 1/2]$.
    Combining the above inequality with \eqref{eq_error_energy_recursive}, we obtain the following unified recursive inequality:
    \begin{equation} \label{eq_unified_single}
        \begin{aligned}
             & \frac{1}{\tau} \left( \ilQ{\be_{\mathbf{Q}}^{n+1}} - \ilQ{\be_{\mathbf{Q}}^{n}}   + \ilu{e_{u}^{n+1}}  - \ilu{e_{u}^{n}}  + |e_s^{n+1}|^2 - |e_s^n|^2 \right) \\
             & \le C \left( |e_s^n|^2 + \|\be_{\mathbf{Q}}^{n}\|_{H_h^1}^2 + \|\be_{\mathbf{Q}}^{n}\|_{H_h^2}^2 + \|e_{u}^{n}\|_{H_h^2}^2 + (\tau + h^2)^2  \right).
        \end{aligned}
    \end{equation}

    Taking the discrete inner product of  \eqref{eq_error_s_final} with $\mLQ \be_{\mathbf{Q}}^{n}$
    with  $n$ replaced by $n-1$ and rearranging the terms
    yields the following equation:
    \begin{equation} \label{eq_H2_bound_shifted_detailed}
        \begin{aligned}
             & \left\langle  \mQ \delta_\tau \be_{\mathbf{Q}}^{n}, \mLQ \be_{\mathbf{Q}}^{n} \right\rangle_h +\! \|\mLQ \be_{\mathbf{Q}}^{n}\|_h^2 =\! \left\langle  \mNQ^{n-1} -\!\mNQz(t_{n-1}) -\widetilde{\mathbf{T}}_{\mathbf{Q}}^{n-1}, \mLQ \be_{\mathbf{Q}}^{n} \right\rangle_h.
        \end{aligned}
    \end{equation}
    Applying the self-adjointness of $\mQ$ and $\mLQ$ together with \eqref{eq_inner_Q}, we rewrite the first term on the LHS of \eqref{eq_H2_bound_shifted_detailed} as follows:
    \begin{align}
        % \begin{aligned}
            % \left\langle  \mQ\delta_\tau \be_{\mathbf{Q}}^{n}, \mLQ\be_{\mathbf{Q}}^{n} \right\rangle_h & =
            \left\langle  \mQ\mLQ\delta_\tau \be_{\mathbf{Q}}^{n}, \be_{\mathbf{Q}}^{n} \right\rangle_h
            &=\frac{1}{2\tau}\left(\left\langle \mQ \mLQ  \be_{\mathbf{Q}}^{n}, \be_{\mathbf{Q}}^{n} \right\rangle_h - \left\langle \mQ \mLQ  \be_{\mathbf{Q}}^{n-1}, \be_{\mathbf{Q}}^{n-1} \right\rangle_h\right.\no\\
                                & \quad \left. + \left\langle \mQ \mLQ  (\be_{\mathbf{Q}}^{n}-\be_{\mathbf{Q}}^{n-1}), \be_{\mathbf{Q}}^{n}-\be_{\mathbf{Q}}^{n-1} \right\rangle_h\right)\no
            \\&=\frac{1}{2\tau} \left( \icQ{\be_{\mathbf{Q}}^{n}} - \icQ{\be_{\mathbf{Q}}^{n-1}} \right) + \frac{\tau}{2} \icQ{\delta_\tau \be_{\mathbf{Q}}^{n}}.\label{eq_source_prev_step}
            % \tau \left(  \left\langle \mathcal{Q} (\tau \mLQ)\mLQ  (\delta_\tau \be_{\mathbf{Q}}^{n}),\delta_\tau \be_{\mathbf{Q}}^{n}\right\rangle_h\right).
        % \end{aligned}
    \end{align}
    Applying Young's inequality to the RHS of \eqref{eq_H2_bound_shifted_detailed}, along with Lemma \ref{thm_full_lipschitz} and \eqref{eqft}, yields the following bound:
    \begin{align}
        &\left\langle \mNQ^{n-1} -\mNQz(t_{n-1}) -\widetilde{\mathbf{T}}_{\mathbf{Q}}^{n-1}, \mLQ \be_{\mathbf{Q}}^{n} \right\rangle_h
        % \no\\& \le \frac{1}{2} \|\mNQ^{n-1} -\mNQz(t_{n-1}) -\widetilde{\mathbf{T}}_{\mathbf{Q}}^{n-1}\|_h^2 + \frac{1}{2} \|\mLQ \be_{\mathbf{Q}}^{n}\|_h^2
        \no \\&\le \frac{1}{2}C_5 \left( |e_s^{n-1}|^2 + \|\be_{\mathbf{Q}}^{n-1}\|_{H_h^1}^2 + \|e_{u}^{n-1}\|_{H_h^2}^2 \right) + C (\tau+h^2)^2+ \frac{1}{2} \|\mLQ \be_{\mathbf{Q}}^{n}\|_h^2.\label{eq_source_prev_step2}
    \end{align}
Substituting \eqref{eq_source_prev_step} and
\eqref{eq_source_prev_step2} into
\eqref{eq_H2_bound_shifted_detailed}, and using the norm-equivalence
estimate
$\|{\be_{\mathbf Q}^{n}}\|_{H_h^2}
\le
C_L\|\mathcal L_h \be_{\mathbf{Q}}^{n}\|_h$, we can obtain the following  inequality:
    \begin{align}
       & \frac{C_L^2}{\tau} \left( \icQ{\be_{\mathbf{Q}}^{n}} - \icQ{\be_{\mathbf{Q}}^{n-1}} \right)+\|\be_{\mathbf{Q}}^{n}\|_{H_h^2}^2 \no\\& \le   \frac{1}{2}C_5^2C_L^2 \left( |e_s^{n-1}|^2 + \|\be_{\mathbf{Q}}^{n-1}\|_{H_h^1}^2 + \|e_{u}^{n-1}\|_{H_h^2}^2 \right) + C(\tau+h^2)^2.\label{eq_H2_full_bound_equiv}
    \end{align}
    % yields the following recursive inequality:
    % \begin{equation} \label{eq_H2_full_bound}
    %     \frac{1}{\tau} \left( \icQ{\be_{\mathbf{Q}}^{n}} - \icQ{\be_{\mathbf{Q}}^{n-1}} \right)+\|\mLQ \be_{\mathbf{Q}}^{n}\|_h^2 \le   \frac{1}{2}C_5^2 \left( |e_s^{n-1}|^2 + \|\be_{\mathbf{Q}}^{n-1}\|_{H_h^1}^2 + \|e_{u}^{n-1}\|_{H_h^2}^2 \right) + C (\tau+h^2)^2.
    % \end{equation}
    Combining \eqref{eq_unified_single}  with \eqref{eq_H2_full_bound_equiv} and balancing the coefficients of the error terms, we obtain the final error evolution inequality:
    \begin{align*}
        % \begin{aligned}
             & \frac{1}{\tau} \left( |{e}_s^{n+1}|^2 - |e_s^n|^2 \right) + \frac{CC_L^2}{\tau} \left[ \icQ{\be_{\mathbf{Q}}^{n}} - \icQ{\be_{\mathbf{Q}}^{n-1}} \right]                                        \\
             & + \frac{1}{\tau} \left[ \|\be_{\mathbf{Q}}^{n+1}\|_{\mathcal{L}_h}^2 - \|\be_{\mathbf{Q}}^{n}\|_{\mathcal{L}_h}^2 \right]
            + \frac{1}{\tau} \left[ \|e_{u}^{n+1}\|_{\mathcal{D}_h}^2 - \|e_{u}^{n}\|_{\mathcal{D}_h}^2 \right]                                                         \no                                          \\
             & \leq C (\tau + h^2)^2 + C \left( |e_s^n|^2 + |e_s^{n-1}|^2 +\|\be_{\mathbf{Q}}^{n}\|_{H_h^1}^2+ \|\be_{\mathbf{Q}}^{n-1}\|_{H_h^1}^2 + \|e_{u}^{n}\|_{H_h^2}^2 + \|e_{u}^{n-1}\|_{H_h^2}^2 \right).\no
        % \end{aligned}
    \end{align*}

    For the sake of brevity, we define the following discrete energy functional $\mathcal{Y}^n$ as
    \ba \label{eqa_20}
    \mathcal{Y}^n \coloneqq  \|\be_{\mathbf{Q}}^{n}\|_{\mathcal{L}_h}^2  + \|e_{u}^{n}\|_{\mathcal{D}_h}^2+|e_s^n|^2.
    \ed
    Then the final error inequality can be  rewritten in terms of  $\mathcal{Y}^n$ as follows:
    \begin{equation*}
        \frac{\left(\mathcal{Y}^{n+1}+CC_L^2\icQ{\be_{\mathbf{Q}}^{n}}\right) - \left(\mathcal{Y}^n+CC_L^2\icQ{\be_{\mathbf{Q}}^{n-1}}\right)}{\tau} \le C (\tau + h^2)^2 + C (\mathcal{Y}^n + \mathcal{Y}^{n-1}).
    \end{equation*}
By the initial error definition \eqref{eqcv}, along with \eqref{eq_initial_error_bound},  we have
\begin{equation}\label{eqcf}
    \mathcal Y^0
    =|e_s^0|^2
    \le Ch^4,
    \qquad
    \|\be_{\mathbf Q}^0\|_{\mathcal Q_{\mathcal L}^2}^2=0.
\end{equation}

    Using \eqref{eqcf} and summing the above inequality over $n$ from $1$ to $m$ yields the following  energy inequality:
    \ba\label{eqa_21}
    \mathcal{Y}^{m+1} +CC_L^2\icQ{\be_{\mathbf{Q}}^{m}}&\le \mathcal{Y}^1 +C \tau \sum_{k=1}^{m} (\tau + h^2)^2 + C \tau \sum_{k=1}^{m} (\mathcal{Y}^k + \mathcal{Y}^{k-1})\\
    &\le \mathcal{Y}^1 + C T (\tau + h^2)^2 + 2C \tau \sum_{k=0}^{m} \mathcal{Y}^k.
    \ed
    Taking $n=0$ in \eqref{eq_unified_single} yields the following bound for the  error $\mathcal{Y}^1$:
    \ba \label{eqa_22}
    \mathcal{Y}^1 \le C(\tau (\tau + h^2)^2+h^4).
    \ed
    Substituting \eqref{eqa_22} in \eqref{eqa_21} and applying the discrete Gronwall lemma yields the following bound for $\mathcal{Y}^m$:
    % \begin{small}
            \ba \label{eqa_24}
    \mathcal{Y}^{m+1} &\le \left( \mathcal{Y}^1 + C T (\tau + h^2)^2 \right) \exp\left( \sum_{k=0}^{m} 2C\tau \right) \\
    &\le \left( C(\tau+  T) (\tau + h^2)^2+Ch^4 \right) e^{2C(T+\tau)}\\
    &\le C(T,C_5,C_6,C_L,\mathcal{M}_1) (\tau^2 + h^4).
    \ed
    % \end{small}
% \end{proof}

By virtue of the norm equivalence, we can obtain the following error estimate:
\[
\|\be_{\mathbf Q}^{n}\|_{H_h^1}
+\|e_u^n\|_{H_h^2}
+|e_s^n|
\le C_{err}(T,C_5,C_6,\mathcal{M}_1,C_L)(\tau+h^2),
\quad 0\le n\le m+1.
\]
Taking $h$ and $\tau$ sufficiently small, we have
\[
s^n=s(t_n)+e_s^n
\ge -M_s-C_{err}(\tau+h^2)\ge -M_s-\frac12>-M_s-1=-C_s,
\quad 0\le n\le m+1.
\]
% Taking $h$ and $\tau$ sufficiently small such that
% \[
% C(\tau+h^2)\le \frac12,
% \]
% we obtain
% \[
% s^n\ge -M_s-\frac12>-M_s-1=-C_s,
% \qquad 0\le n\le m+1.
% \]
Therefore, the bootstrap assumption is improved and can be continued to the
next time level. By induction, it holds for all
$0\le n\le N_T$.
\section{Numerical experiments}\label{section6}
In this section, we present numerical experiments in two and three dimensions to verify the theoretical results. The tests are designed to examine the temporal convergence rate, the modified energy dissipation law, structure-preserving properties,  and the ability of the method to capture the self-assembly dynamics of SmA liquid crystals.

Following \cite{shi2025modified}, we consider
$\vT<\vT_2^*<\vT_1^*$, corresponding to a deep
quench into the stable SmA regime.
Specifically, we set $a_1=a_2=1$, $\vT=-1$, $\vT_1^*=1$, and $\vT_2^*=0$,
which yields $\A=-2$ and $a=-1$.
The other parameters are set to
\ba \no
L_d=\!2\pi,~ K=\!0.1,~ \B=\!0,~ \C=\!4,~ b=\!0,~ c=\!3,~\! q=\!4,~ B_0 \!= \!1 \times 10^{-3},~ \!s_+ \!=\! \sqrt{-2\A/\C}.
\ed
We then take $\zeta(x)=e^x$ and set $\kappa=3$. We use uniform spatial grids with mesh sizes $h=2\pi/128$ in two dimensions (2D) and $h=2\pi/64$ in three dimensions (3D).

Throughout this section, $\{\xi_i(\mathbf{x})\}_{i=1}^5$ denote
mutually independent random variables uniformly distributed on
$[-0.1,0.1]$ at each grid point $\mathbf{x}$, where
$\mathbf{x}=(x,y)$ in 2D  and $\mathbf{x}=(x,y,z)$ in 3D.
\subsection{Convergence tests}
% According to \cite{shi2025modified}, the temperature-driven states of the smectic liquid crystal are characterized by two nondimensionalized reduced temperatures: $\A = a_1(\vT - \vT_1^*)$ for the I-N transition, and $a = a_2(\vT - \vT_2^*)$ for the N-S transition, where the critical temperatures satisfy $\vT_1^* > \vT_2^*$.
% Theoretical analysis reveals that the nematic phase loses stability via a pitchfork bifurcation at $\vT = \vT_2^*$, and layered smectic structures only emerge when $\vT < \vT_2^*$.
In this test, we evaluate the convergence rates of the proposed scheme by computing the discrete errors for $\mathbf{Q}$, $u$ and $s$ in various norms.
For the temporal convergence test, we set the final time to $T=1$ and take $\tau=2^{-k}\tau_1$, where $\tau_1=2^{-4}$ and $k=0,1,\ldots,7$.
The benchmark solution at $T=1$ is computed using the time step $\tau=2^{-9}\tau_1$.
For the spatial convergence test, we set the final time to $T=1$, fix the time step at $\tau=10^{-3}$, and take $h=2^{-k}h_1$, where $h_1=2^{-5}$ and $k=0,1,\ldots,4$. The benchmark solution at $T=1$ is computed using the finer mesh size $h=2^{-5}h_1=2^{-10}$ with the same time step.

The initial conditions are set as follows:
\begin{equation*}
\begin{aligned}
\mathbf{Q}^0(x,y) &= S \left( \mathbf{n}\mathbf{n}^T - \frac{1}{2}\mathbf{I} \right),\quad \mathbf{n} = (\cos  q(x+y), \sin  q(x+y))^T,
\end{aligned}
\end{equation*}
where  $S$ is set to $ 0.25$ and  $u$ is initialized by $u^0(x,y) = \cos( q x)$.
\begin{table}[htbp]
    \centering
    \small
    \setlength{\tabcolsep}{3pt}
    \begin{tabular}{lccccc}
        \toprule
        \multirow{2}{*}{$\tau$} & \multicolumn{2}{c}{$\mathbf{Q}$} & \multicolumn{2}{c}{$u$} & $s$ \\
        \cmidrule(lr){2-3} \cmidrule(lr){4-5} \cmidrule(lr){6-6}
        & $\ell^2$-norm & $H_h^1$-norm & $\ell^2$-norm & $H_h^2$-norm & absolute error \\
        \midrule
        $2^{-4}$  & $1.56\mathrm{e}{-3}$ (--) & $1.70\mathrm{e}{-2}$ (--) & $3.78\mathrm{e}{-2}$ (--) & $1.00\mathrm{e}{0}$ (--) & $6.25\mathrm{e}{-6}$ (--) \\
        $2^{-5}$  & $6.72\mathrm{e}{-4}$ (1.22) & $7.35\mathrm{e}{-3}$ (1.21) & $1.79\mathrm{e}{-2}$ (1.08) & $4.79\mathrm{e}{-1}$ (1.07) & $4.20\mathrm{e}{-6}$ (0.57) \\
        $2^{-6}$  & $3.03\mathrm{e}{-4}$ (1.15) & $3.30\mathrm{e}{-3}$ (1.16) & $7.90\mathrm{e}{-3}$ (1.18) & $2.13\mathrm{e}{-1}$ (1.17) & $2.46\mathrm{e}{-6}$ (0.77) \\
        $2^{-7}$  & $1.43\mathrm{e}{-4}$ (1.09) & $1.54\mathrm{e}{-3}$ (1.10) & $3.48\mathrm{e}{-3}$ (1.18) & $9.37\mathrm{e}{-2}$ (1.19) & $1.32\mathrm{e}{-6}$ (0.89) \\
        $2^{-8}$  & $6.92\mathrm{e}{-5}$ (1.04) & $7.43\mathrm{e}{-4}$ (1.05) & $1.59\mathrm{e}{-3}$ (1.13) & $4.26\mathrm{e}{-2}$ (1.14) & $6.83\mathrm{e}{-7}$ (0.96) \\
        $2^{-9}$  & $3.40\mathrm{e}{-5}$ (1.02) & $3.64\mathrm{e}{-4}$ (1.03) & $7.57\mathrm{e}{-4}$ (1.07) & $2.02\mathrm{e}{-2}$ (1.08) & $3.45\mathrm{e}{-7}$ (0.98) \\
        $2^{-10}$ & $1.69\mathrm{e}{-5}$ (1.01) & $1.80\mathrm{e}{-4}$ (1.01) & $3.68\mathrm{e}{-4}$ (1.04) & $9.80\mathrm{e}{-3}$ (1.04) & $1.73\mathrm{e}{-7}$ (0.99) \\
        $2^{-11}$ & $8.41\mathrm{e}{-6}$ (1.01) & $8.98\mathrm{e}{-5}$ (1.01) & $1.82\mathrm{e}{-4}$ (1.02) & $4.83\mathrm{e}{-3}$ (1.02) & $8.69\mathrm{e}{-8}$ (1.00) \\
        \bottomrule
    \end{tabular}
    \caption{Temporal errors and convergence rates for $\mathbf{Q}$, $u$, and $s$ in various norms.}
    \label{tab_error_all_variables}
\end{table}
\begin{table}[htbp]
    \centering
    \small
    \setlength{\tabcolsep}{3pt}
    \begin{tabular}{lccccc}
        \toprule
        \multirow{2}{*}{$h$}
        & \multicolumn{2}{c}{$\mathbf{Q}$}
        & \multicolumn{2}{c}{$u$}
        & $s$ \\
        \cmidrule(lr){2-3}
        \cmidrule(lr){4-5}
        \cmidrule(lr){6-6}
        & $\ell^2$-norm
        & $H_h^1$-norm
        & $\ell^2$-norm
        & $H_h^2$-norm
        & absolute error \\
        \midrule
        $2^{-5}$
        & $1.38\mathrm{e}{-2}$ (--)
        & $1.11\mathrm{e}{-1}$ (--)
        & $2.75\mathrm{e}{-3}$ (--)
        & $2.58\mathrm{e}{-1}$ (--)
        & $1.55\mathrm{e}{-3}$ (--) \\

        $2^{-6}$
        & $3.58\mathrm{e}{-3}$ (1.94)
        & $3.01\mathrm{e}{-2}$ (1.88)
        & $7.77\mathrm{e}{-4}$ (1.82)
        & $8.67\mathrm{e}{-2}$ (1.57)
        & $3.33\mathrm{e}{-4}$ (2.22) \\

        $2^{-7}$
        & $9.03\mathrm{e}{-4}$ (1.99)
        & $7.69\mathrm{e}{-3}$ (1.97)
        & $2.00\mathrm{e}{-4}$ (1.96)
        & $2.32\mathrm{e}{-2}$ (1.90)
        & $7.83\mathrm{e}{-5}$ (2.09) \\

        $2^{-8}$
        & $2.26\mathrm{e}{-4}$ (2.00)
        & $1.93\mathrm{e}{-3}$ (1.99)
        & $5.03\mathrm{e}{-5}$ (1.99)
        & $5.91\mathrm{e}{-3}$ (1.98)
        & $1.93\mathrm{e}{-5}$ (2.02) \\

        $2^{-9}$
        & $5.66\mathrm{e}{-5}$ (2.00)
        & $4.84\mathrm{e}{-4}$ (2.00)
        & $1.26\mathrm{e}{-5}$ (2.00)
        & $1.48\mathrm{e}{-3}$ (1.99)
        & $4.79\mathrm{e}{-6}$ (2.01) \\
        \bottomrule
    \end{tabular}
    \caption{Spatial errors and convergence rates for $\mathbf{Q}$, $u$,
    and $s$ in various norms.}
    \label{tab_spatial_error_all_variables}
\end{table}

Tables~\ref{tab_error_all_variables} and \ref{tab_spatial_error_all_variables} present the temporal and spatial errors, respectively, together with the corresponding convergence rates for $\mathbf{Q}$, $u$, and $s$ measured in various norms.
These results are consistent with Theorem \ref{thm_global_error}.
% The comparison shows that the relaxation step suppresses the numerical tracking error of $s$ and improves its accuracy by several orders of magnitude, which reveals the significant numerical advantage of the proposed relaxed step.
% Meanwhile, the error magnitudes and convergence rates for $\mathbf{Q}$ and $u$ remain virtually unaltered, which indicates that the relaxation step does not compromise the overall accuracy of the scheme for the primary physical variables.

\subsection{Structure-preserving properties and self-assembly dynamics}
 In this test, we validate the structure-preserving properties of the proposed
scheme and illustrate its effectiveness in capturing the self-assembly
dynamics of the SmA phase.
The time step size is  set to $\tau=2^{-5}$ and the final time to $T=50$.
To trigger the phase transition and topological defect evolution, the components of the nematic tensor $\mathbf{Q}$ are initialized with random noise.
The initial conditions are set as follows:
\begin{equation} \label{eqb13}
    \mathbf{Q}^0(x, y) = \begin{pmatrix}
        \xi_1(x, y) & \xi_2(x, y) \\
        \xi_2(x, y) & -\xi_1(x, y)
    \end{pmatrix}, \quad
    u^0(x, y) = \cos( q x).
\end{equation}
% where $\xi_1(x,y)$ and $\xi_2(x,y)$ are independent random variables
% uniformly distributed on $[-0.5,0.5]$.
 \begin{figure}[htbp]
    \centering
		 \includegraphics[width=0.8\textwidth]{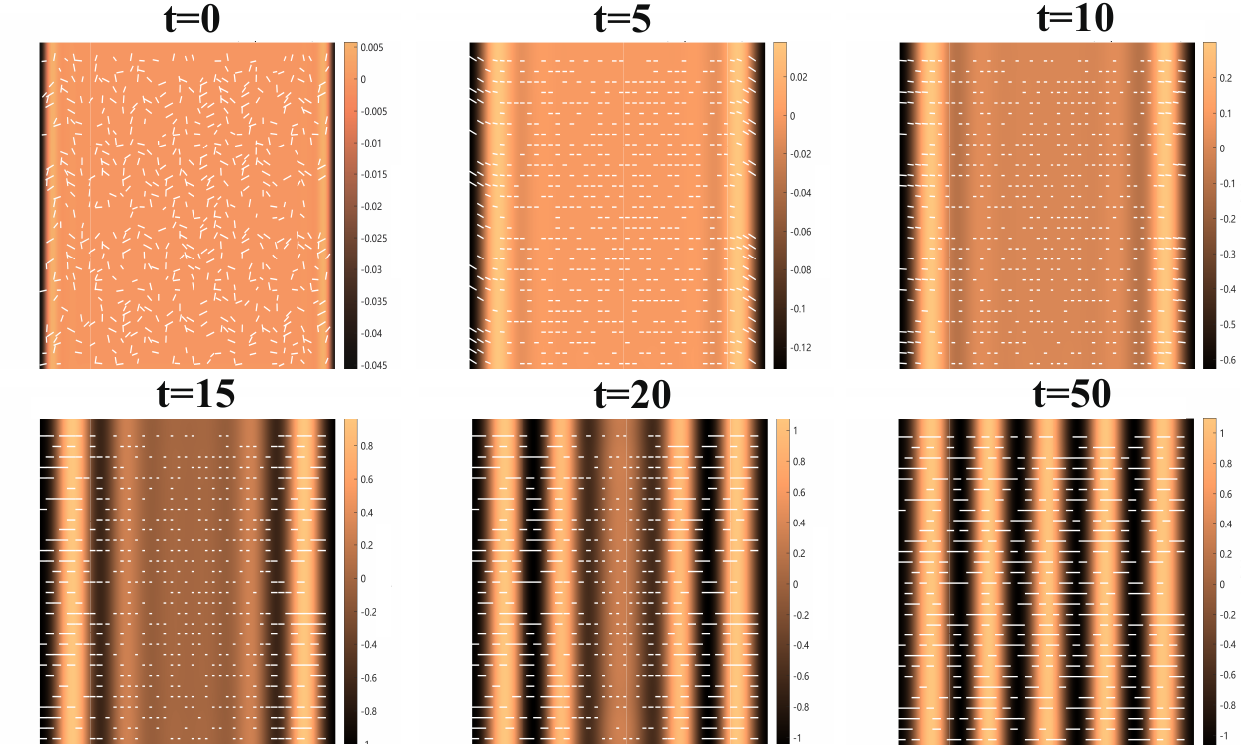}
	 \caption{Evolution of the SmA system  at $t=0, 5, 10, 15, 20$, and $t=50$. The local director field is visualized by white segments superimposed on the density field $u$, with warmer colors indicating the high-density smectic layers.}
		  \label{smectic}
 \end{figure}

Figure \ref{smectic} illustrates  a comprehensive visualization of the self-assembly process from an initial disordered state to a highly organized smectic phase.
The coarsening dynamics and the
eventual alignment of the director field indicate that the proposed scheme
captures the phase-ordering behavior of the SmA system.
% At the initial stage ($t=0$), the system starts with a random distribution of both the local director  and positional order, representing a nearly isotropic configuration.
% As the phase ordering  proceeds  ($t=5$ to $t=50$), the system undergoes a coarsening process, where the initially small and disordered domains merge and grow into larger, more coherent smectic layers.
%  By $t=50$, the system reaches a  steady state, where the density stripes exhibit high contrast, and the director field is uniformly oriented along the $x$-axis.

  \begin{figure}[htbp]
    \centering
		 \includegraphics[width=0.8\textwidth]{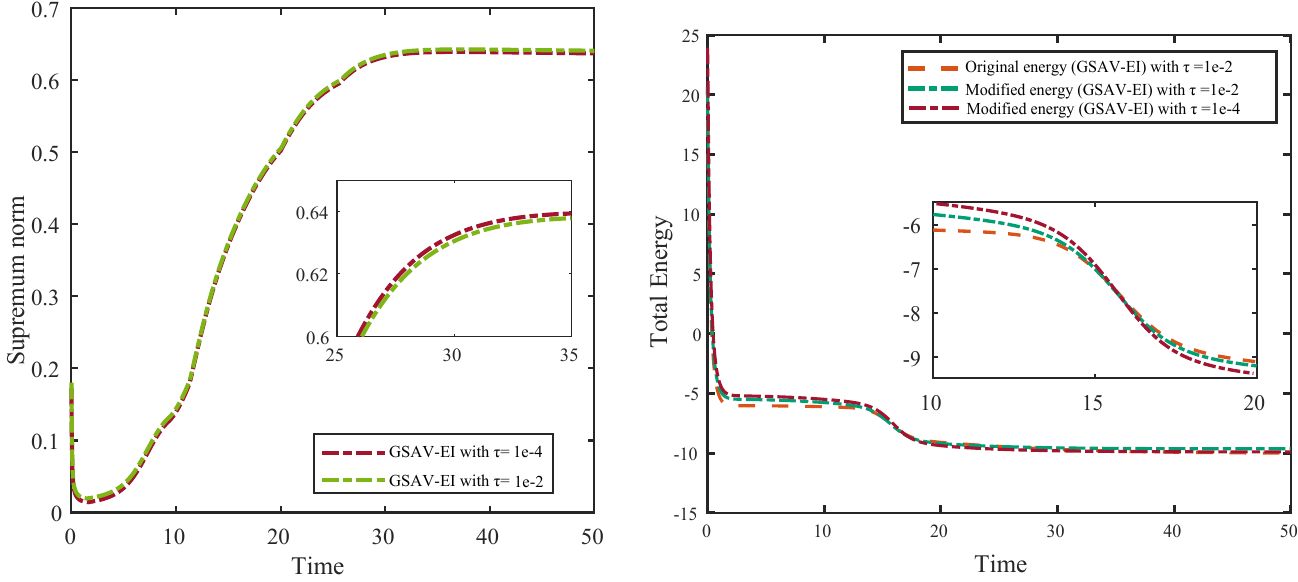}
	 \caption{Temporal evolution of the discrete $L^\infty$-norm (left) and the original and modified energies (right) for the SmA system under various time step sizes. }
		 \label{fig:energy_comparison}
 \end{figure}
Figure \ref{fig:energy_comparison}   illustrates the temporal evolution of the discrete $L^\infty$-norm and the energy trajectories.
Following an initial transient and rapid growth, the curves in the left
panel level off, indicating that $\|\mathbf Q\|_\infty$ remains bounded
throughout the simulation.
% %  Notably, the solution with $\tau = 10^{-2}$ does not exhibit any unbounded growth or numerical instability.
The right panel demonstrates that the original and modified energies for all tested cases exhibit a monotonic decay over time, which is consistent with Theorem \ref{them3_1}.
% The modified energy of the GSAV-EI scheme without the relaxation step (blue dotted line) significantly deviates from the original energy (orange solid line) and modified energy (red dashed line) over time, exhibiting a non-physical excessive dissipation.  In striking contrast, by incorporating the relaxation step, the modified energy closely tracks the original energy (red solid line). This strong alignment confirms the effectiveness of the relaxation mechanism.

  \begin{figure}[htbp]
		 \includegraphics[width=\textwidth]{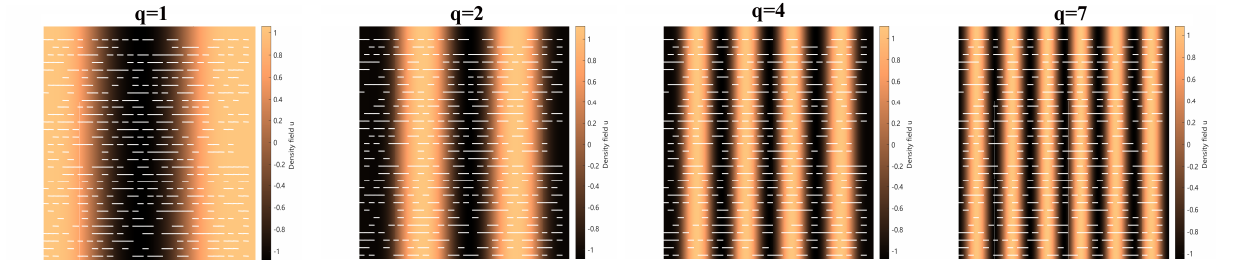}
	 \caption{Evolution of the SmA system at $t=50$ under various wave numbers $q$. }
            \label{fig:parameter_comparison}
 \end{figure}
 Figure \ref{fig:parameter_comparison} shows that the spatial frequency of the layers increases as $q$ varies over $1,2,4$, and $7$.
  This is consistent with the physical interpretation of $q$ as the intrinsic wave number dictating the preferred layer spacing in the smectic phase.
  These results demonstrate that our numerical scheme can accurately capture the influence of key physical parameters on the self-assembled structures in the SmA phase.

  \begin{figure}[htbp]
		 \includegraphics[width=\textwidth]{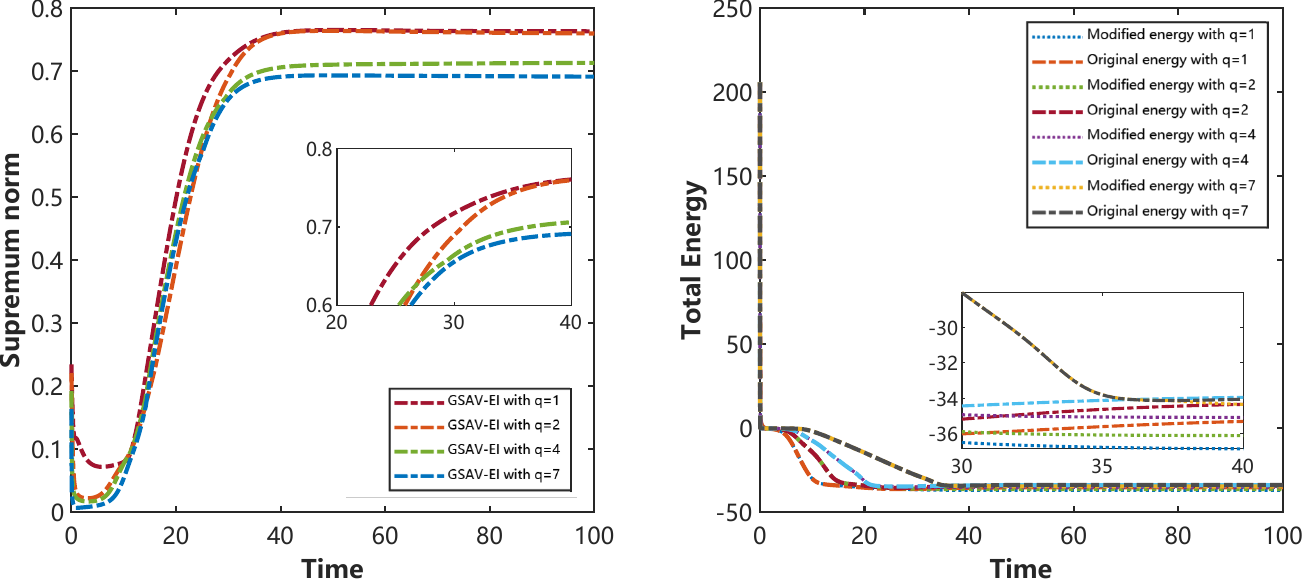}
	 \caption{Time evolution of the discrete $L^\infty$-norm (left) and the original and modified energies (right) for the SmA system with different values of the wave number $q$.}
		 \label{fig:energy_robustness}
 \end{figure}
Figure \ref{fig:energy_robustness} illustrates the temporal evolution of the discrete $L^\infty$-norm and the energy trajectories under varying wave numbers $q$.
The left panel shows that the $L^\infty$-norm of the solution remains uniformly bounded across all tested parameter combinations.
The right panel demonstrates that the original and modified energies for all tested cases exhibit a consistent, monotonic decay over time, with the modified energy closely tracking the original energy. This confirms that the proposed scheme  accurately captures the energy dissipation dynamics across a wide range of physical parameters.
\subsection{Adaptive time-stepping test}
In this test, we apply the proposed scheme together with an adaptive
time-stepping strategy, in which the time-step size $\tau_{n+1}$ is
dynamically adjusted according to the rate of change of the energy
functional \cite{Zhang2011}:
\begin{equation} \label{eq_adaptive_tau}
    \tau_{n+1} = \max \left\{ \tau_{\min}, \frac{\tau_{\max}}{\sqrt{1 + \varrho  |d_t \mathcal{E}_h^n|^2}} \right\},
\end{equation}
where $d_t\mathcal E_h^n=\frac{\mathcal E_h^{n+1}-\mathcal E_h^n}{\tau_n}$ and $\varrho > 0$ is a constant parameter.
The minimal and maximal time step sizes are set to $\tau_{\min} = 0.01$ and $\tau_{\max} = 0.2$, respectively, and we take $\varrho = 10^5$.
The initial conditions are set as follows:
\begin{equation*}
    \mathbf{Q}^0(x, y) = \begin{pmatrix}
        \xi_1(x, y) & \xi_2(x, y) \\
        \xi_2(x, y) & -\xi_1(x, y)
    \end{pmatrix}, \quad
    u^0(x,y)=\frac12\cos( qx)+\frac12\cos( qy).
\end{equation*}
% where $\xi_1(x, y)$ and $\xi_2(x, y)$ are independently generated random numbers  distributed in  $[-0.5, 0.5]$.

  \begin{figure}[htbp]
    \centering
		 \includegraphics[width=0.8\textwidth]{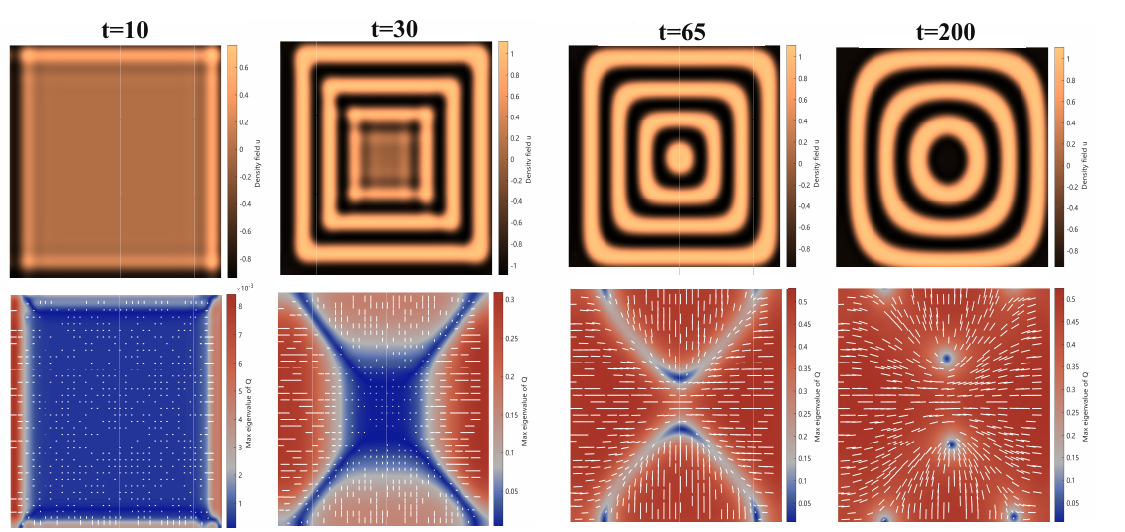}
	 \caption{Dynamic evolution of the SmA  system at $t=10, 30, 65$, and $200$. The top row displays the positional order $u$, while the bottom row shows the maximum eigenvalue of the $\mathbf{Q}$-tensor, superimposed with the director field $\mathbf{n}$ (white lines). }
		  \label{fig:smectic_evolution}
 \end{figure}
Figure~\ref{fig:smectic_evolution} illustrates the SmA self-assembly process with adaptive time-stepping, showing the positional order $u$ and the director field $\mathbf{n}$ (white lines) in the top and bottom rows, respectively.
 Starting from a disordered state, the system undergoes smectic layer
formation, defect evolution, and domain coarsening before approaching
a steady state with well-defined layers and a nearly uniform director
field.
This evolution illustrates the capability of the adaptive time-stepping
strategy to efficiently resolve the complex dynamics of the SmA phase.
  \begin{figure}[htbp]
    \centering
		 \includegraphics[width=0.8\textwidth]{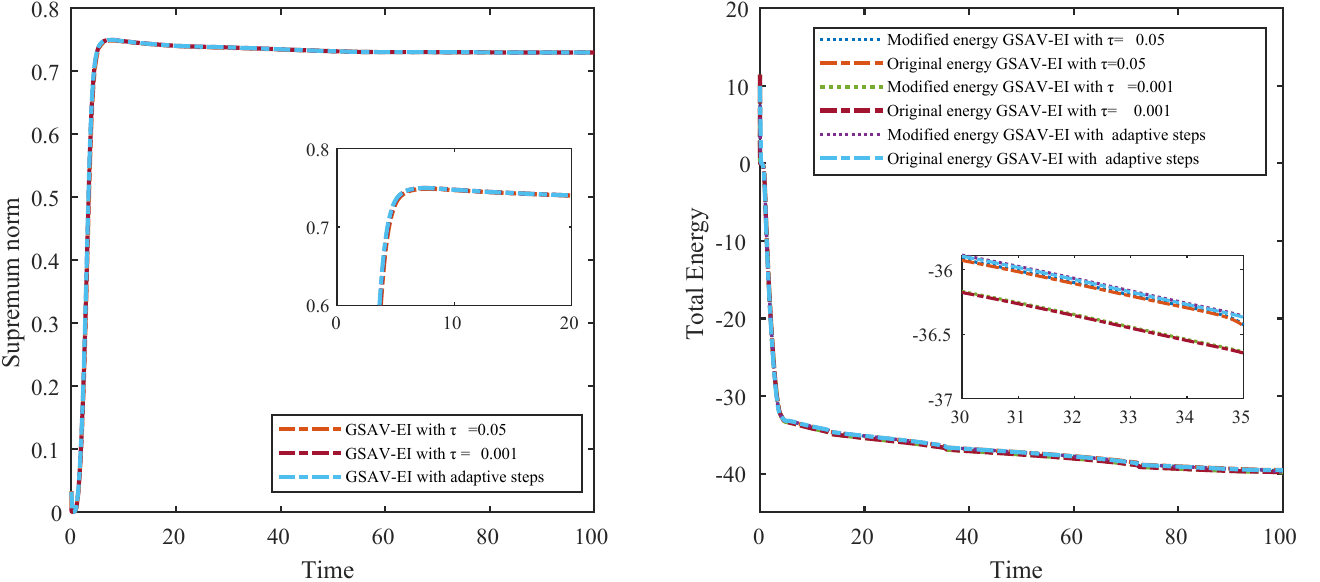}
	 \caption{Temporal evolution of the discrete $L^\infty$-norm (left) and the original
and modified energies (right) for the SmA system under the adaptive
GSAV-EI scheme and fixed-step GSAV-EI schemes with $\tau=0.05$ and
$\tau=0.001$.}
		  \label{fig:adaptive_energy}
 \end{figure}

Figure \ref{fig:adaptive_energy} presents the temporal evolution of the discrete $L^\infty$-norm and the energy trajectories for the SmA system under different numerical schemes and time-stepping strategies.
% The left panel shows that all tested schemes maintain a uniformly bounded $L^\infty$-norm throughout the simulation.
% Notably, the GSAV-LRI scheme (gray dashed line) exhibits a slightly lower steady-state norm level compared to the GSAV-EI scheme (blue solid line).
% This discrepancy arises because the left-rectangle rule in LRI truncates fine dynamics, whereas ETD's integration preserves high-fidelity trajectories.
The right panel demonstrates that the original and modified energies for all tested schemes exhibit a consistent, monotonic decay over time.
Together with the step size profiles in Fig. \ref{fig:adaptive_steps_profile}, the results
demonstrate that the adaptive strategy can  refine the step size during rapid topological transitions and enlarge it during smooth coarsening stages, thereby significantly reducing the overall computational cost without sacrificing physical fidelity.

 \begin{figure}[htbp]
    \centering
		 \includegraphics[width=0.8\textwidth]{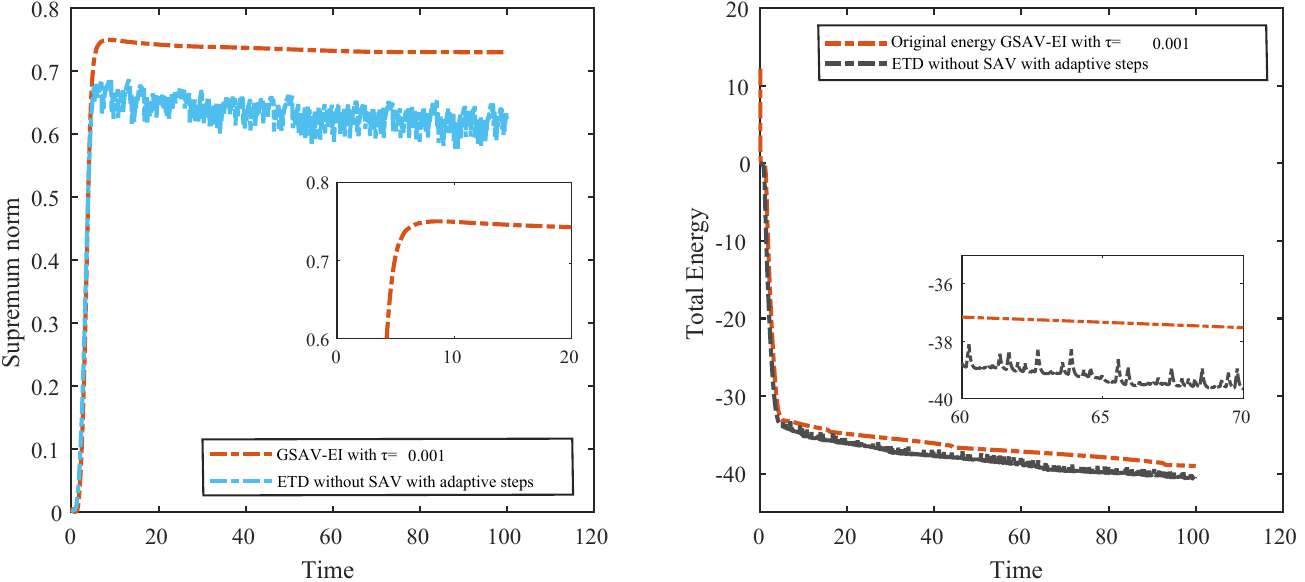}
	 \caption{Temporal evolution of the discrete $L^\infty$-norm (left) and the original and modified energies (right) for the SmA system, comparing the  GSAV-EI scheme against an adaptive  ETD scheme  without introducing SAV. }
		  \label{fig:stability_comparison}
 \end{figure}

Figure \ref{fig:stability_comparison} provides a direct comparison of the temporal evolution of the discrete $L^\infty$-norm and the energy trajectories for the SmA system under various numerical schemes.
% As shown in two panels,  the GSAV-EI   scheme yields  smooth norm evolutions and strictly dissipative energy trajectories.
In  contrast,  the standard ETD scheme without SAV (light blue dash-dotted line) suffers from severe high-frequency spurious oscillations and fails to maintain energy monotonicity, ultimately leading to numerical instability.
% Notably, as corroborated by the right panel, the energy of the GSAV-LRI scheme (gray dashed line) with adaptive time-stepping exhibits a slight, non-physical increase during $t \in [60, 70]$.
This comparison underscores the critical importance of the SAV mechanism
and  ETD integration in stabilizing the numerical solution  and preventing non-physical numerical blow-ups.
  \begin{figure}[htbp]
    \centering
		 \includegraphics[width=0.9\textwidth]{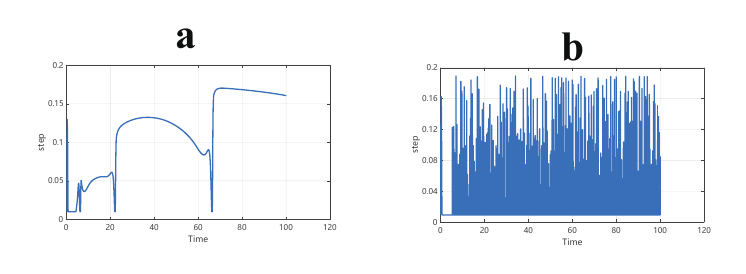}
	 \caption{Profiles of the adaptive time step sizes $\tau$ for  numerical schemes: (a) GSAV-EI, (b) the standard ETD scheme without SAV.}
		 \label{fig:adaptive_steps_profile}
 \end{figure}

 Figure \ref{fig:adaptive_steps_profile} presents the dynamic profiles of the adaptive time step sizes $\tau$ for different numerical schemes.
 The curve in panel (a)  shows a clear pattern of step size modulation that corresponds to the underlying physical dynamics, whereas panel (b) exhibits numerical instability.
 This behavior  further emphasizes the critical role of the SAV mechanism in stabilizing the numerical solution and enabling effective adaptive time-stepping during complex phase transition dynamics.
\subsection{Three-dimensional simulations}
For the 3D simulations,
we take $L_d=2 \pi$.
 The model parameters are chosen as
\begin{equation*}
\begin{aligned}
\A=-8,\quad \B=8,\quad \C=16,\quad a=-5,\quad b=0,\quad c=5,\quad
q=4,\quad K=0.1,
\end{aligned}
\end{equation*}
which yield $s_+=1$ and
drive the system into the SmA regime.
% The model parameters for the 3D simulations are chosen to ensure the system is deeply quenched into the smectic phase.
% Specifically, we set $\A = -8$, $\B = 8$, and $\C = 16$, which yields a stable uniaxial nematic background with a theoretical equilibrium scalar order parameter $s_+  = 1.0$. The parameters for the smectic energy are set to $a = -5$, $b = 0$, and $c = 5$, driving the symmetry-breaking phase transition. The wave number $q = 4$ and the elastic constant $K = 0.1$ are adopted to reliably resolve the layered smectic structures under reasonable 3D grid resolutions.
To initiate the phase transition and topological defect evolution, we
prescribe the  initial conditions:
\begin{equation*}
\begin{aligned}
    \mathbf{Q}^0(x, y, z) & = \begin{pmatrix}
        \xi_1(x, y, z) & \xi_2(x, y, z) & \xi_3(x, y, z) \\
        \xi_2(x, y, z) & \xi_4(x, y, z) & \xi_5(x, y, z) \\
        \xi_3(x, y, z) & \xi_5(x, y, z) & -\big(\xi_1(x, y, z) + \xi_4(x, y, z)\big)
    \end{pmatrix},\\
    u^0(x, y, z) &= \frac{1}{3} \big( \cos( q x) + \cos( q y) + \cos( q z) \big).
\end{aligned}
\end{equation*}
% where $\xi_i(x,y,z)$, $i=1,\ldots,5$, are independent uniform random variables on
% $[-0.1,0.1]$.

  \begin{figure}[htbp]
    \centering
		 \includegraphics[width=0.9\textwidth]{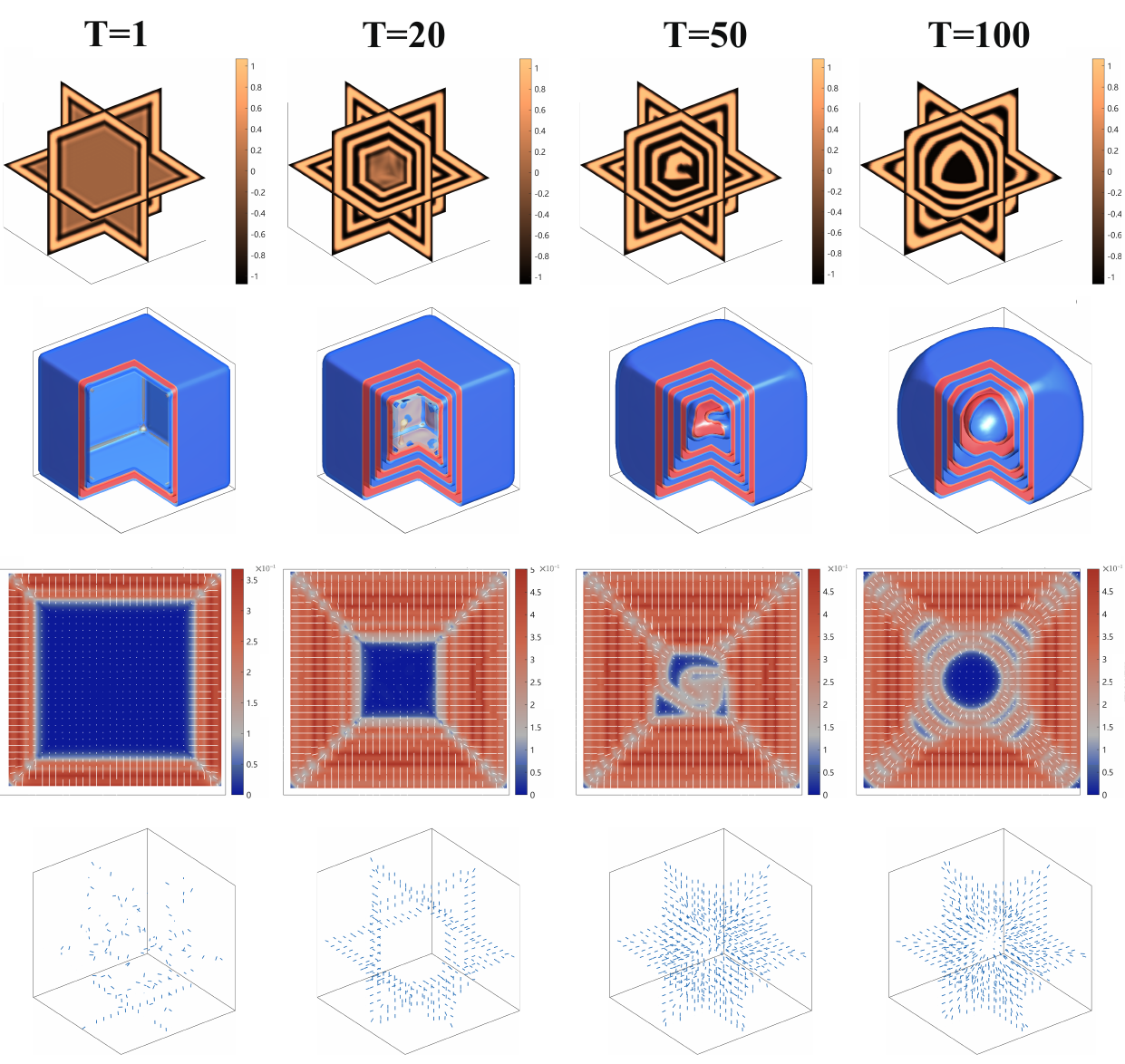}
	 \caption{Dynamic evolution of the 3D SmA phase at $t=1, 20, 50$, and $100$. From top to bottom, the rows display: (1) orthogonal 2D slices of $u$; (2) 3D cutaway isosurfaces of $u$ revealing internal nested layers; (3) the maximum eigenvalue of $\mathbf{Q}$ on the mid-plane, superimposed with the director field; and (4) the sparse 3D director field.}
		  \label{fig:3d_evolution}
 \end{figure}
Figure \ref{fig:3d_evolution} presents  a comprehensive visualization of the dynamic evolution of the 3D SmA phase at different time points, showcasing the intricate self-assembly process and the formation of complex smectic structures.
% The time evolution of the density field $u$ is visualized via orthogonal 2D slices in the first row and 3D cutaway isosurfaces in the second row, revealing the macroscopic smectic layers and their internal nested structures, respectively.
The cutaway view reveals the "onion-like" nested spherical layers, and the corresponding director field exhibits a  radial alignment pointing outward from a central $+1$ point defect.
% Furthermore, the third row overlays the local director field onto the mid-plane maximum eigenvalue of $\mathbf{Q}$ to reveal defect formation and order-disorder coexistence, while the fourth row illustrates complex spatial orientations via a sparse 3D director field.
%  This complex evolution vividly demonstrates the capability of our numerical scheme to accurately capture the intricate dynamics of 3D smectic layer formation and topological defect evolution.

  \begin{figure}[htbp]
    \centering
		 \includegraphics[width=0.8\textwidth]{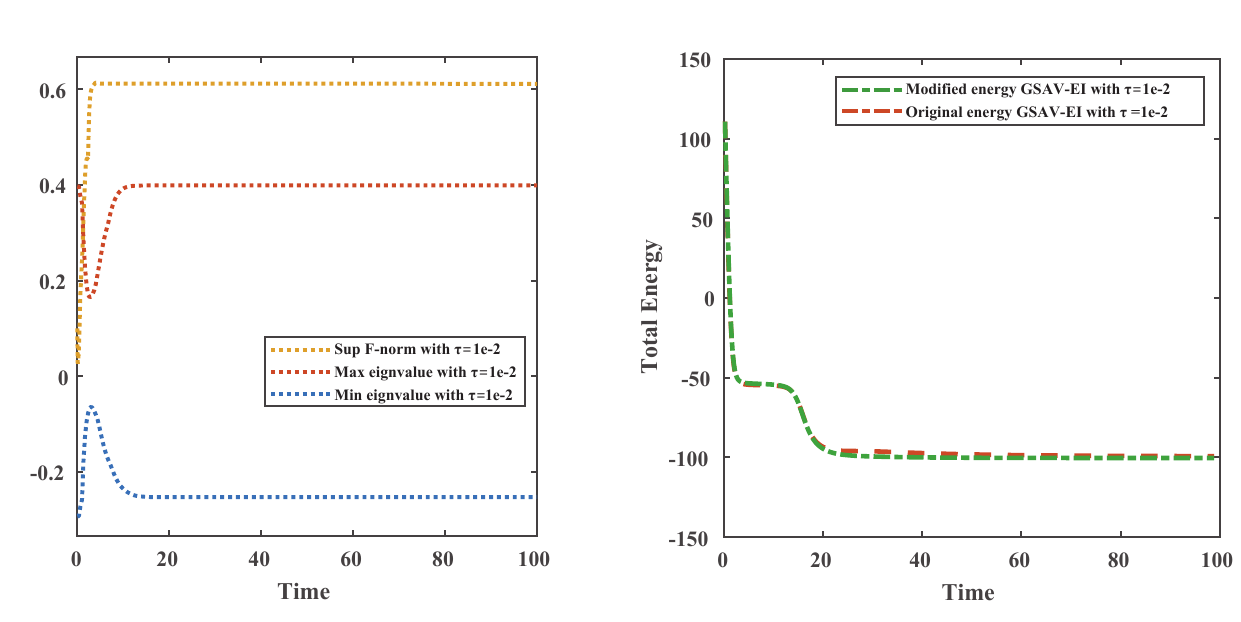}
	 \caption{Temporal evolution of the discrete $L^\infty$-norm of
$\mathbf{Q}$ and its maximum and minimum eigenvalues (left), together
with the original and modified energies (right), computed with
$\tau=0.01$.}
		  \label{fig:norm_energy_accuracy}
 \end{figure}

Figure \ref{fig:norm_energy_accuracy}  presents  the temporal evolution of the discrete $L^\infty$-norm of $\mathbf{Q}$ and its maximum and minimum eigenvalues, along with the original and modified energies for the 3D SmA system.
The results demonstrate that the discrete $L^\infty$-norm of $\mathbf{Q}$ remains uniformly bounded, and the eigenvalues of $\mathbf{Q}$ stay within the physically meaningful range $[-1/3,2/3]$, confirming the physical consistency of the numerical solution \cite{wang2021modelling}.

\section{Conclusion}\label{section7}
% {\color{red}In this paper, we have developed and analyzed a first-order, structure-preserving GSAV-EI scheme for the modified Landau-de Gennes model of Smectic-A liquid crystals.
% By reformulating the exponential-integrator discretization into a
% backward-Euler-type structure, the proposed method retains the advantages of the
% GSAV-EI framework while avoiding the severe time-step restriction required in
% the original analysis. We proved unconditional modified-energy stability and
%  derived
% optimal fully discrete error estimates.}

% {\color{red}To improve the physical consistency of long-time simulations, we introduced a
% relaxation step that reduces the discrepancy between the auxiliary modified
% energy and the original physical energy. The relaxation preserves the discrete
% energy dissipation law and substantially improves the accuracy of the auxiliary
% variable. Numerical experiments in two and three dimensions confirmed the
% theoretical convergence rate, demonstrated the monotone energy decay and
% uniform boundedness of the numerical solution, and captured the self-assembly
% dynamics and parameter dependence of SmA structures.}
In this paper, we developed a first-order, structure-preserving
numerical scheme for the SmA system. In particular, we  recast the time-discrete exponential integrator into a
quasi-implicit backward Euler-type form, thereby enabling a rigorous, fully discrete error analysis. By bridging these two frameworks, this reformulation opens up  new possibilities for the theoretical analysis of ETD-type schemes.

The present framework provides an efficient and robust numerical tool for
simulating SmA phase. Future work will include the development of
higher-order GSAV-EI schemes, the construction of methods that preserve the
physical eigenvalue constraints of the Q-tensor, and extensions to more general
liquid-crystal hydrodynamic models.

% \appendix
% \section{Proof of Lemma \ref{lem:continuous_regularity}}\label{sea}
% \section{Proof of Lemma \ref{thm_full_lipschitz}}\label{seb}

%    Text of article.

%    Bibliographies can be prepared with BibTeX using amsplain,
%    amsalpha, or (for "historical" overviews) natbib style.
%  \bibliographystyle{amsplain}
%    Insert the bibliography data here.
% AMSRefs bibliography converted from references.bib.
% 49 entries; citation keys are unchanged.
% AMSRefs bibliography converted from references.bib.
% 49 entries; citation keys are unchanged.


\begin{bibdiv}
\begin{biblist}

\bib{Ball03052015}{article}{
   author={Ball, J. M.},
   author={Bedford, S. J.},
   title={Discontinuous order parameters in liquid crystal theories},
   journal={Mol. Cryst. Liq. Cryst.},
   volume={612},
   date={2015},
   pages={1--23},
   doi={10.1080/15421406.2015.1030571},
}

\bib{ball2011orientability}{article}{
   author={Ball, J. M.},
   author={Zarnescu, A.},
   title={Orientability and energy minimization in liquid crystal models},
   journal={Arch. Ration. Mech. Anal.},
   volume={202},
   date={2011},
   pages={493--535},
   doi={10.1007/s00205-011-0421-3},
}

\bib{baskaran2013convergence}{article}{
   author={Baskaran, A.},
   author={Lowengrub, J. S.},
   author={Wang, C.},
   author={Wise, S. M.},
   title={Convergence analysis of a second order convex splitting scheme for the modified phase field
   crystal equation},
   journal={SIAM J. Numer. Anal.},
   volume={51},
   date={2013},
   pages={2851--2873},
   doi={10.1137/120880677},
}

\bib{biscari2007landau}{article}{
   author={Biscari, P.},
   author={Calderer, M. C.},
   author={Terentjev, E. M.},
   title={Landau--de Gennes theory of isotropic-nematic-smectic liquid crystal transitions},
   journal={Phys. Rev. E},
   volume={75},
   date={2007},
   pages={051707},
   doi={10.1103/PhysRevE.75.051707},
}

\bib{chen1976landau}{article}{
   author={Chen, J.-H.},
   author={Lubensky, T. C.},
   title={{Landau--Ginzburg} mean-field theory for the nematic to smectic-{C} and nematic to
   smectic-{A} phase transitions},
   journal={Phys. Rev. A},
   volume={14},
   date={1976},
   pages={1202--1207},
   doi={10.1103/PhysRevA.14.1202},
}

\bib{deGennes1974}{book}{
   author={de Gennes, P.-G.},
   title={The Physics of Liquid Crystals},
   publisher={Clarendon Press},
   place={Oxford},
   date={1974},
}

\bib{du2018stabilized}{article}{
   author={Du, Q.},
   author={Ju, L.},
   author={Li, X.},
   author={Qiao, Z.},
   title={Stabilized linear semi-implicit schemes for the nonlocal {Cahn--Hilliard} equation},
   journal={J. Comput. Phys.},
   volume={363},
   date={2018},
   pages={39--54},
   doi={10.1016/j.jcp.2018.02.023},
}

\bib{du2019}{article}{
   author={{D}u, Q.},
   author={Ju, L.},
   author={Li, X.},
   author={Qiao, Z.},
   title={Maximum principle preserving exponential time differencing schemes for the nonlocal
   {Allen--Cahn} equation},
   journal={SIAM J. Numer. Anal.},
   volume={57},
   date={2019},
   pages={875--898},
   doi={10.1137/18M118236X},
}

\bib{du2021}{article}{
   author={D{u}, Q.},
   author={Ju, L.},
   author={Li, X.},
   author={Qiao, Z.},
   title={Maximum bound principles for a class of semilinear parabolic equations and exponential
   time-differencing schemes},
   journal={SIAM Rev.},
   volume={63},
   date={2021},
   pages={317--359},
   doi={10.1137/19M1243750},
}

\bib{du1991numerical}{article}{
   author={Du, Q.},
   author={Nicolaides, R. A.},
   title={Numerical analysis of a continuum model of phase transition},
   journal={SIAM J. Numer. Anal.},
   volume={28},
   date={1991},
   pages={1310--1322},
   doi={10.1137/0728069},
}

\bib{E1997modeling}{article}{
   author={E, W.},
   title={Nonlinear continuum theory of smectic-{A} liquid crystals},
   journal={Arch. Ration. Mech. Anal.},
   volume={137},
   date={1997},
   pages={159--175},
   doi={10.1007/s002050050026},
}

\bib{eyre1998unconditionally}{article}{
   author={Eyre, D. J.},
   title={Unconditionally gradient stable time marching the {Cahn--Hilliard} equation},
   journal={MRS Online Proc. Libr.},
   volume={529},
   date={1998},
   pages={39--46},
   doi={10.1557/PROC-529-39},
}

\bib{fei2018isotropic}{article}{
   author={Fei, M.},
   author={Wang, W.},
   author={Zhang, P.},
   author={Zhang, Z.},
   title={On the isotropic--nematic phase transition for the liquid crystal},
   journal={Peking Math. J.},
   volume={1},
   date={2018},
   pages={141--219},
   doi={10.1007/s42543-018-0005-3},
}

\bib{feng2013stabilized}{article}{
   author={Feng, X.},
   author={Tang, T.},
   author={Yang, J.},
   title={Stabilized {Crank--Nicolson}/{Adams--Bashforth} schemes for phase field models},
   journal={East Asian J. Appl. Math.},
   volume={3},
   date={2013},
   pages={59--80},
   doi={10.4208/eajam.200113.220213a},
}

\bib{furihata2001stable}{article}{
   author={Furihata, D.},
   title={A stable and conservative finite difference scheme for the {Cahn--Hilliard} equation},
   journal={Numer. Math.},
   volume={87},
   date={2001},
   pages={675--699},
   doi={10.1007/PL00005429},
}

\bib{han2015microscopic}{article}{
   author={Han, J.},
   author={Luo, Y.},
   author={Wang, W.},
   author={Zhang, P.},
   author={Zhang, Z.},
   title={From microscopic theory to macroscopic theory: a systematic study on modeling for liquid
   crystals},
   journal={Arch. Ration. Mech. Anal.},
   volume={215},
   date={2015},
   pages={741--809},
   doi={10.1007/s00205-014-0792-3},
}

% \bib{hu2016disclination}{article}{
%    author={Hu, Y.},
%    author={Qu, Y.},
%    author={Zhang, P.},
%    title={On the disclination lines of nematic liquid crystals},
%    journal={Commun. Comput. Phys.},
%    volume={19},
%    date={2016},
%    pages={354--379},
%    doi={10.4208/cicp.210115.180515a},
% }

\bib{huangGlobalWellposednessDynamical2015}{article}{
   author={Huang, J.},
   author={Ding, S.},
   title={Global well-posedness for the dynamical {Q}-tensor model of liquid crystals},
   journal={Sci. China Math.},
   volume={58},
   date={2015},
   pages={1349--1366},
   doi={10.1007/s11425-015-4990-8},
}

\bib{huang2024errorestimateorderenergy}{article}{
   author={Huang, J.},
   author={Li, X.},
   author={Ji, G.},
   title={Error estimate for the first order energy stable scheme of {Q}-tensor nematic model},
   journal={arXiv preprint arXiv:2412.08027},
   date={2024},
}

\bib{izzo2020landau}{article}{
   author={Izzo, D.},
   author={De Oliveira, M. J.},
   title={Landau theory for isotropic, nematic, smectic-{A}, and smectic-{C} phases},
   journal={Liq. Cryst.},
   volume={47},
   date={2020},
   pages={99--105},
   doi={10.1080/02678292.2019.1631968},
}

\bib{Ji2020BDF2}{article}{
   author={Ji, G.},
   title={A {BDF2} energy-stable scheme for a general tensor-based model of liquid crystals},
   journal={East Asian J. Appl. Math.},
   volume={10},
   date={2020},
   pages={57--71},
   doi={10.4208/eajam.291018.070619},
}

\bib{jiang2022improving}{article}{
   author={Jiang, M.},
   author={Zhang, Z.},
   author={Zhao, J.},
   title={Improving the accuracy and consistency of the scalar auxiliary variable ({SAV}) method with
   relaxation},
   journal={J. Comput. Phys.},
   volume={456},
   date={2022},
   pages={110954},
   doi={10.1016/j.jcp.2022.110954},
}

\bib{ju2022generalized}{article}{
   author={Ju, L.},
   author={Li, X.},
   author={Qiao, Z.},
   title={Generalized {SAV}-exponential integrator schemes for {Allen--Cahn} type gradient flows},
   journal={SIAM J. Numer. Anal.},
   volume={60},
   date={2022},
   pages={1905--1931},
   doi={10.1137/21M1446496},
}

\bib{liu2007dynamics}{article}{
   author={Liu, C.},
   author={Shen, J.},
   author={Yang, X.},
   title={Dynamics of defect motion in nematic liquid crystal flow: modeling and numerical simulation},
   journal={Commun. Comput. Phys.},
   volume={2},
   date={2007},
   pages={1184--1198},
   doi={10.4208/cicp.2007.v2.p1184},
}

\bib{liu2025maximum}{article}{
   author={Liu, Y.},
   author={Quan, C.},
   author={Wang, D.},
   title={On the maximum bound principle and energy dissipation of exponential time differencing
   methods for the matrix-valued {Allen--Cahn} equation},
   journal={IMA J. Numer. Anal.},
   volume={45},
   date={2025},
   pages={3342--3377},
   doi={10.1093/imanum/drae090},
}

% \bib{liu2024novel}{article}{
%    author={Liu, Z.},
%    author={Zhang, Y.},
%    author={Li, X.},
%    title={A novel energy-optimized technique of {SAV}-based ({EOP-SAV}) approaches for dissipative
%    systems},
%    journal={J. Sci. Comput.},
%    volume={101},
%    date={2024},
%    pages={38},
%    doi={10.1007/s10915-024-02677-0},
% }

\bib{majumdar2010equilibrium}{article}{
   author={Majumdar, A.},
   title={Equilibrium order parameters of nematic liquid crystals in the {Landau--de Gennes} theory},
   journal={European J. Appl. Math.},
   volume={21},
   date={2010},
   pages={181--203},
   doi={10.1017/S0956792509990210},
}

% \bib{mcmillan1971simple}{article}{
%    author={McMillan, W. L.},
%    title={Simple molecular model for the smectic-{A} phase of liquid crystals},
%    journal={Phys. Rev. A},
%    volume={4},
%    date={1971},
%    pages={1238--1240},
%    doi={10.1103/PhysRevA.4.1238},
% }

\bib{pazy2012semigroups}{book}{
   author={Pazy, A.},
   title={Semigroups of Linear Operators and Applications to Partial Differential Equations},
   series={Applied Mathematical Sciences},
   volume={44},
   publisher={Springer},
   place={New York},
   date={1983},
   doi={10.1007/978-1-4612-5561-1},
}

\bib{pevnyi2014modeling}{article}{
   author={Pevnyi, M. Y.},
   author={Selinger, J. V.},
   author={Sluckin, T. J.},
   title={Modeling smectic layers in confined geometries: Order parameter and defects},
   journal={Phys. Rev. E},
   volume={90},
   date={2014},
   pages={032507},
   doi={10.1103/PhysRevE.90.032507},
}

\bib{Zhang2011}{article}{
   author={Qiao, Z.},
   author={Zhang, Z.},
   author={Tang, T.},
   title={An adaptive time-stepping strategy for the molecular beam epitaxy models},
   journal={SIAM J. Sci. Comput.},
   volume={33},
   date={2011},
   pages={1395--1414},
   doi={10.1137/100812781},
}

\bib{shen2011spectral}{book}{
   author={Shen, J.},
   author={Tang, T.},
   author={Wang, L.-L.},
   title={Spectral Methods: Algorithms, Analysis and Applications},
   series={Springer Series in Computational Mathematics},
   volume={41},
   publisher={Springer},
   place={Berlin, Heidelberg},
   date={2011},
   doi={10.1007/978-3-540-71041-7},
}

\bib{shen2018scalar}{article}{
   author={Shen, J.},
   author={Xu, J.},
   author={Yang, J.},
   title={The scalar auxiliary variable ({SAV}) approach for gradient flows},
   journal={J. Comput. Phys.},
   volume={353},
   date={2018},
   pages={407--416},
   doi={10.1016/j.jcp.2017.10.021},
}

\bib{shen2019new}{article}{
   author={{S}hen, J.},
   author={Xu, J.},
   author={Yang, J.},
   title={A new class of efficient and robust energy stable schemes for gradient flows},
   journal={SIAM Rev.},
   volume={61},
   date={2019},
   pages={474--506},
   doi={10.1137/17M1150153},
}

\bib{shen2010numerical}{article}{
   author={Shen, J.},
   author={Yang, X.},
   title={Numerical approximations of {Allen--Cahn} and {Cahn--Hilliard} equations},
   journal={Discrete Contin. Dyn. Syst.},
   volume={28},
   date={2010},
   pages={1669--1691},
   doi={10.3934/dcds.2010.28.1669},
}

\bib{shi2025modified}{article}{
   author={Shi, B.},
   author={Han, Y.},
   author={Ma, C.},
   author={Majumdar, A.},
   author={Zhang, L.},
   title={A modified {Landau--de Gennes} theory for smectic liquid crystals: phase transitions and
   structural transitions},
   journal={SIAM J. Appl. Math.},
   volume={85},
   date={2025},
   pages={821--847},
   doi={10.1137/24M1682105},
}

\bib{wang2011energy}{article}{
   author={Wang, C.},
   author={Wise, S. M.},
   title={An energy stable and convergent finite-difference scheme for the modified phase field crystal
   equation},
   journal={SIAM J. Numer. Anal.},
   volume={49},
   date={2011},
   pages={945--969},
   doi={10.1137/090752675},
}

\bib{wang2021modelling}{article}{
   author={Wang, W.},
   author={Zhang, L.},
   author={Zhang, P.},
   title={Modelling and computation of liquid crystals},
   journal={Acta Numer.},
   volume={30},
   date={2021},
   pages={765--851},
   doi={10.1017/S0962492921000088},
}

\bib{wise2009energy}{article}{
   author={Wise, S. M.},
   author={Wang, C.},
   author={Lowengrub, J. S.},
   title={An energy-stable and convergent finite-difference scheme for the phase field crystal equation},
   journal={SIAM J. Numer. Anal.},
   volume={47},
   date={2009},
   pages={2269--2288},
   doi={10.1137/080738143},
}

\bib{xia2023variational}{article}{
   author={Xia, J.},
   author={Farrell, P. E.},
   title={Variational and numerical analysis of a {Q}-tensor model for smectic-{A} liquid crystals},
   journal={ESAIM Math. Model. Numer. Anal.},
   volume={57},
   date={2023},
   pages={693--716},
   doi={10.1051/m2an/2022083},
}

\bib{xia2024simple}{article}{
   author={Xia, J.},
   author={Han, Y.},
   title={Simple tensorial theory of smectic-{C} liquid crystals},
   journal={Phys. Rev. Res.},
   volume={6},
   date={2024},
   pages={033232},
   doi={10.1103/PhysRevResearch.6.033232},
}

\bib{xia2021structural}{article}{
   author={Xia, J.},
   author={MacLachlan, S.},
   author={Atherton, T. J.},
   author={Farrell, P. E.},
   title={Structural landscapes in geometrically frustrated smectics},
   journal={Phys. Rev. Lett.},
   volume={126},
   date={2021},
   pages={177801},
   doi={10.1103/PhysRevLett.126.177801},
}

\bib{xu2006stability}{article}{
   author={Xu, C.},
   author={Tang, T.},
   title={Stability analysis of large time-stepping methods for epitaxial growth models},
   journal={SIAM J. Numer. Anal.},
   volume={44},
   date={2006},
   pages={1759--1779},
   doi={10.1137/050628143},
}

\bib{xu2019efficient}{article}{
   author={Xu, Z.},
   author={Yang, X.},
   author={Zhang, H.},
   author={Xie, Z.},
   title={Efficient and linear schemes for anisotropic {Cahn--Hilliard} model using the
   stabilized-invariant energy quadratization ({S-IEQ}) approach},
   journal={Comput. Phys. Commun.},
   volume={238},
   date={2019},
   pages={36--49},
   doi={10.1016/j.cpc.2018.12.019},
}

\bib{yang2017linearly}{article}{
   author={Yang, X.},
   author={Han, D.},
   title={Linearly first- and second-order, unconditionally energy stable schemes for the phase field
   crystal model},
   journal={J. Comput. Phys.},
   volume={330},
   date={2017},
   pages={1116--1134},
   doi={10.1016/j.jcp.2016.10.020},
}

\bib{yang2020convergence}{article}{
   author={Yang, X.},
   author={Zhang, G.-D.},
   title={Convergence analysis for the invariant energy quadratization ({IEQ}) schemes for solving the
   {Cahn--Hilliard} and {Allen--Cahn} equations with general nonlinear potential},
   journal={J. Sci. Comput.},
   volume={82},
   date={2020},
   pages={55},
   doi={10.1007/s10915-020-01151-x},
}

% \bib{zhang2025novel}{article}{
%    author={Zhang, B.},
%    author={Zhou, C.},
%    author={Fu, H.},
%    title={A novel efficient generalized energy-optimized exponential {SAV} scheme with variable-step
%    {BDF}k method for gradient flows},
%    journal={Appl. Numer. Math.},
%    volume={210},
%    date={2025},
%    pages={39--63},
%    doi={10.1016/j.apnum.2024.12.005},
% }

% \bib{zhang2022generalized}{article}{
%    author={Zhang, Y.},
%    author={Shen, J.},
%    title={A generalized {SAV} approach with relaxation for dissipative systems},
%    journal={J. Comput. Phys.},
%    volume={464},
%    date={2022},
%    pages={111311},
%    doi={10.1016/j.jcp.2022.111311},
% }

\bib{zhao2017novel}{article}{
   author={Zhao, J.},
   author={Yang, X.},
   author={Gong, Y.},
   author={Wang, Q.},
   title={A novel linear second order unconditionally energy stable scheme for a hydrodynamic
   {Q}-tensor model of liquid crystals},
   journal={Comput. Methods Appl. Mech. Engrg.},
   volume={318},
   date={2017},
   pages={803--825},
   doi={10.1016/j.cma.2017.01.031},
}

\end{biblist}
\end{bibdiv}
\end{document}